\documentclass[10pt,a4paper]{article}
\usepackage[utf8]{inputenc}
\usepackage[english]{babel}
\usepackage{multicol,graphicx,color}
\usepackage{pslatex}
\usepackage{authblk}
\usepackage{amsthm}
\usepackage{amsmath}
\usepackage{amssymb}
\usepackage{latexsym}
\usepackage{lscape}
\usepackage{epsfig}
\usepackage{amsfonts}
\usepackage{mathrsfs,bbm}
\usepackage[
hmarginratio={1:1},     
vmarginratio={1:1},     
textwidth=15cm,        
textheight=21cm,
heightrounded,          
]{geometry}

\usepackage{tikz}

\makeatletter

\newtheorem{theorem}{Theorem}[section]
\newtheorem{corollary}[theorem]{Corollary}
\newtheorem{definition}[theorem]{Definition}

\newtheorem{lemma}[theorem]{Lemma}
\newtheorem{proposition}[theorem]{Proposition}
\newtheorem{remark}[theorem]{Remark}

\theoremstyle{definition} \theoremstyle{remark}
\numberwithin{equation}{section}

\definecolor{my_green}{RGB}{20, 200, 20}

\definecolor{my_red}{RGB}{255, 0, 0}

\definecolor{my_purple}{RGB}{255, 76, 135}

\DeclareMathOperator{\morse}{m}

\newcommand{\M}{S_\mu}

\newcommand{\ess}{{\rm ess}}

\newcommand{\eps}{\varepsilon}

\DeclareMathOperator{\supp}{supp}

\newcommand{\loc}{{\mathrm{\textup{loc}}}}

\newcommand{\essinf}{\mathrm{essinf}}
\newcommand{\rad}{\mathrm{rad}}

\newcommand{\R}{\mathbb{R}}
\newcommand{\N}{\mathbb{N}}

\begin{document}

\author{Pablo Carrillo \footnote{pablo.carrillo\_martinez@univ-fcomte.fr}}  \affil{Universit\'e Marie et Louis Pasteur, CNRS, LmB (UMR 6623), F-25000 Besan\c{c}on, France}

\author{Louis Jeanjean\footnote{louis.jeanjean@univ-fcomte.fr}}  \affil{Universit\'e  Marie et Louis Pasteur, CNRS, LmB (UMR 6623), F-25000 Besan\c{c}on, France}

\title{Normalized solutions to mass supercritical Schr\"{o}dinger equations with radial potentials}
\date{}

\maketitle

\begin{abstract}
We study the stationary nonlinear Schr\"{o}dinger equation 
\begin{equation} \label{01}
-\Delta u+V(x)u+\lambda u=|u|^{q-2}u, \quad u \in H^1(\R^N), \quad N \geq 2,
\end{equation}
where $V \in L^{\infty}(\R^N)$ is a radial potential. 
In the $L^2$-supercritical regime, we show the existence of an explicit $\mu_0 >0$ such that, for any $\mu \in (0, \mu_0)$, Equation \eqref{01} admits two solutions having $L^2$ norm $\mu$. The potential $V$ is not assumed to have a sign, nor a specific behavior at infinity and only a low regularity is required.  Our proof relies on the use of Morse type information, on some spectral arguments, and on a blow-up analysis developed in a radial setting.  
\end{abstract}

\medskip

{\small \noindent \text{Key Words:} Nonlinear Schr\"odinger equations; $L^2$-supercritical; Normalized solutions; Blow-up techniques.\\
\text{Mathematics Subject Classification:} 35J60, 47J30}

\medskip

{\small \noindent \text{Acknowledgements:}
This work has been carried out in the framework of the Project NQG (ANR-23-CE40-0005-01), funded by the French National Research Agency (ANR).
P. Carrillo, and L. Jeanjean thank the ANR for its support. Moreover, an initial version of this article was contained in P. Carrillo's thesis. The authors would like to thank the thesis reviewers, P. Gravejat and F. Robert, for their numerous suggestions, which significantly improved the initial version.}

\medskip

{\small \noindent \text{Statements and Declarations:} The authors have no relevant financial or non-financial interests to disclose.}

{\small \noindent \text{Data availability:} Data sharing is not applicable to this article as no datasets were generated or analyzed during the current study.}

\section{Introduction}

In this paper we are interested in the existence of solutions to the nonlinear Schr\"odinger equation with potential in the whole space $\R^N$

\begin{equation}\label{EQ:Rn}
-\Delta u+V(x)u+\lambda u=|u|^{q-2}u,\quad u	\in H^1(\R^N)
\end{equation}
where it is assumed, all along, that $N\geq 2$, $V\in L^\infty(\R^N)$ and $$ 2+\frac{4}{N}<q<2^*.$$ We recall that $2^* = \frac{2N}{N-2}$, if $N \geq 3$ and $2^* = + \infty$, if $N=2$.

Equations of the form of \eqref{EQ:Rn} appear from the study of time dependent nonlinear Schr\"odinger equations $$-i\frac{\partial}{\partial t}\varphi(x,t)=\Delta_x\varphi(x,t)+f(x,\varphi(x,t)),$$
when searching for standing wave solutions of the form 
$$\varphi(x,t)=e^{i\lambda t}u(x),$$
where $u$ is a real function. Here we consider the case of a separable potential - power non-linearity $$f(x,z)=V(x)+|z|^{q-2}z.$$

A notable direction in the study of Equation \eqref{EQ:Rn} consists in fixing the $L^2$ norm of the desired solution, which physically is interpreted as searching for solutions where the mass of the particle is fixed. In this case, solutions to Equation \eqref{EQ:Rn} may be searched through variational methods and in fact arise as critical points of the $C^2$ energy functional
\begin{equation}\label{eq:functional Rn}
    E(u)=\frac{1}{2}\int_{\R^N}|\nabla u|^2+\frac{1}{2} \int_{\R^N}V(x)u^2-\frac{1}{q}\int_{\R^N}|u|^q
\end{equation}
subject to the mass constraint 
\begin{equation}
    \label{eq:mass constraint}
H^1_\mu(\R^N)=\{u\in H^1(\R^N)\colon \|u\|_{L^2(\R^N)}=\mu\},
\end{equation}
where $\mu>0$ is the desired norm. Note that the parameter $\lambda$, present in Equation \eqref{EQ:Rn}, comes out here as a Lagrange multiplier and thus is actually an unknown of the problem. 

Whenever $2<q<2+\frac{4}{N}$, it can be readily seen, using the Gagliardo-Nirenberg inequality \eqref{Eq GN}, that the energy functional defined in \eqref{eq:functional Rn} is bounded from below and coercive on $H^1_\mu(\R^N)$, so solutions can be looked for via a global minimization approach. This case is known as the mass subcritical regime and after, some earlier works, \cite{St81,St82}, its treatment started essentially with the works of P.-L. Lions \cite{Lions1984a, Lions1984b}. This case is still under investigation and we would like to point out \cite{IkMi20,IkYa24} for recent sharp results.

Whenever $2+\frac4{N}<q<2^*$ we are in the mass supercritical regime and it may be seen by testing functions
that the Energy functional is unbounded from below in $H^1_\mu(\R^N)$. As a result, one needs to look for other types of critical points such as local minima or critical points at some minimax level, typically lying at a mountain pass level. In this latter situation it is now well known that a central difficulty is the possible unboundedness of the Palais-Smale sequences (PS sequences for short) for $E$ constrained to $H^1_\mu(\R^N)$. Under the assumption that the problem is autonomous, namely that $V$ is a constant, but considering more general nonlinearities this case we first handled in \cite{Je97}. A key idea to tackling this difficulty was the introduction of a natural constraint related to the Pohozaev identity which furthermore allowed to solve additional compactness issues. Since then, many results on autonomous equations or systems have been obtained, still making use of such type of identity, see for example \cite{BaJe18, BaSo17, BaSo19, BiMe21, GoJe18, IkTa19}. For more general nonlinearities we may refer the reader to 
\cite{JeLu20,So20} and for results concerning the Sobolev critical case
see \cite{JeLe22, So20critical, VeYu25, WeWu22}.

Turning back to cases where $V$ is not a constant let us mention the case of trapping potentials, i.e.
$\lim_{|x|\to\infty}V(x)=+\infty.$
In the study of Schr\"odinger systems like in \cite{NoTaVe15}, this condition allowed to regain some compactness. Let us also mention \cite{BeBoJeVi17,JeSo25} dealing with cases where the potential is partially confining.

Recently the case of non-trapping potentials has begun to be studied in the mass supercritical regime and specifically the case where $\lim_{|x|\to\infty}V(x)=0.$ As already anticipated, one of the main issues is recovering enough compactness for the Palais Smale sequences. Inspired by the techniques used in the autonomous cases, most strategies are still based on the used of a Pohozaev type identity which now reads as
\begin{equation}\label{PoG}
\frac{1}{q}\|u\|_q^q=\frac{2}{N(q-2)}\|\nabla u\|_2^2-\frac{1}{q-2}\int_{\R^N} V(x)u^2-\frac{2}{N(q-2)}\int_{\R^N} V(x)u\nabla u\cdot x.
\end{equation}
However, the presence of the additional last term makes the analysis more delicate and requires further assumptions on certain norms of $V(x)$ and of
$W(x):=|x|V(x)$.

To be more specific, in \cite{BMRV} the authors considered the case of non-negative potentials and either $V\in L^{\frac{N}2}(\R^N)$ or $V\in L^\infty(\R^N)$. In the first case a solution to Equation \eqref{EQ:Rn} was obtained for every $\mu >0$ under the assumption that $\|V\|_\frac{N}{2}$ and $\|W\|_N$ are below a certain explicit constant; in the latter case they required $\|V\|_\infty$ and  $\|W\|_\infty$ to be smaller certain thresholds depending on the mass $\mu >0$. These bounds allowed, somehow, to recover the estimates obtained in the autonomous cases. This paper was later complemented by \cite{MRV}, where existence results involving non-positive potentials with $V\in L^s(\R^N)$ for all $s\in[\frac{N}{2},+\infty]$ were derived with similar smallness conditions (depending on the mass constraint $\mu >0$ for all $s\neq \frac{N}2$) on the norms of both $V$ and $W$. In both papers the assumption that the potential has a sign allows to benefit from a comparison between the considered problem and the associated problem at infinity.

Alternatively, the authors of \cite{BQZ} derived a procedure which involved studying the equation on large but \emph{bounded} domains, then passing to the limit to recover $\R^N$.  Whilst also making a Pohozaev identity come into play, this time the required estimates for the potential terms involved additional assumptions on the derivatives of the potential. It is also worth mentioning that in order to recover compactness they present a novel and interesting condition on the derivative of the potential at infinity.

Actually, in all the above mentioned papers, a Pohozaev identity of the type of \eqref{PoG} plays a crucial role at some stage of the proof. In order to be able to deduce information from it, it seems necessary to impose bounds on $W(x)=|x|V(x)$ which leads to imposing a certain speed of decay of the potential $V$. Sometimes  requiring additional regularity on $V$ is also necessary.

At this point let us observe that the set of solutions to Equation \eqref{EQ:Rn} under the mass constraint \eqref{eq:mass constraint}, remains unchanged if we replace the potential $V(x)$ by $V(x)+d$, where $d \in \R$ is arbitrary.
Thus, the assumption $\lim_{|x|\to\infty}V(x)=0$ may be seen as the existence of a limit at infinity for the potential $V(x)$. In the above-mentioned papers, setting this limit to $0$ allowed for simplification and uniformization when presenting the results. Therefore, the required conditions on $V$, such as a sign or a decay condition at infinity, should be understood as conditions on $(V - \lim_{|x|\to\infty}V(x))$ and accordingly for $W(x)$.

Let us also mention that there  do exist other approaches that do not require the use of a Pohozaev identity nor conditions on $V$ at infinity. For example in \cite{PPVV} the authors instead use a Lyapunov-Schmidt reduction technique to derive existence results. While this manages to deal with the problem of a priori estimates and loss of compactness at infinity, it comes at the cost of having to assume the non-degeneracy of the potential at a given point and also it only provides a perturbation result in the sense that if a solution is obtained for small values of $\mu >0$ there is no explicit bound saying how small $\mu >0$ must be. 

Following the above discussion, in this paper we develop, under the assumption that the potential is radial, an approach to find non-perturbation results which avoids the use of a global Pohozaev identity as \eqref{PoG} and the need of higher orders of regularity of the potential or of norm bounds on $W(x)=V(x)|x|$. In fact, we do not need the potential $V$ to have a limit at infinity either.

In order to state our results we introduce some notations.   We consider the self-adjoint operator
$$-\Delta + V \colon H^2_\rad(\R^N) \subset L^2_\rad(\R^N) \to L^2_\rad(\R^N)$$
and we denote by $\sigma(-\Delta + V)$ its spectrum and by $\lambda_1(V)$ the infimum of its spectrum. Namely 
\begin{equation*}
\label{Def-firsteigenvalue}
\lambda_1(V) :=  \inf_{u \in H^1_{\rad}(\R^N)\backslash \{0\}} \frac{\int_{\R^N} |\nabla u|^2 + V(x) |u|^2}{\int_{\R^N} |u|^2}.
\end{equation*}
Additionally, we denote the essential spectrum of $-\Delta + V$ and its infimum respectively by $\sigma_{ess}(- \Delta + V)$ and $ \lambda_{ess}(V)$.

The assumptions on the potential that will be used in the paper are the following:
\begin{enumerate}
\item[(V0)] $V$ is a radial function;
    \item[(V1)] $V\in L^{\infty}(\R^N)$; 
    		\item[(V2)]  The infimum of the spectrum $\lambda_1(V)$ is an eigenvalue.
\end{enumerate}

Let $\underline{V}:= \essinf_{x \in \R^N} V(x)$. As already observed, adding some constant to $V$ makes no difference regarding sets of solutions.  In particular the quantity 
\begin{equation*}
\underline{V} - \lambda_{ess}(V) \leq 0
\end{equation*}
is invariant under such operation. Having this in mind we may now state our main existence results.

\begin{theorem}\label{theo:main}
   Assume that $(V0)-(V2)$ holds. Then, there exists an explicit value $\mu_0 >0$ depending only on $N,q$ and $(\underline{V} - \lambda_{ess}(V))$ such that, for all $\mu \in (0, \mu_0),$ there exists a positive radial solution to Equation \eqref{EQ:Rn} under the mass constraint \eqref{eq:mass constraint} lying at a mountain pass level of $E$ restricted to $H^1_\mu(\R^N)$. 
	Moreover, the associated Lagrange multiplier satisfies $\lambda > - \lambda_1(V).$
\end{theorem}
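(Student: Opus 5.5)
We plan to work on $H^1_{\mu,\rad}(\R^N):=H^1_\mu(\R^N)\cap H^1_\rad(\R^N)$; by the principle of symmetric criticality, constrained critical points there solve \eqref{EQ:Rn}. Since adding a constant to $V$ changes neither the solutions nor the constraint, and only shifts $\lambda$, we normalize $\lambda_{ess}(V)=0$. Let $\varphi_1>0$ be the radial eigenfunction given by (V2), with $\|\varphi_1\|_2=1$.

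\textbf{Mountain pass geometry.} For $u\in H^1_{\mu,\rad}$, Gagliardo--Nirenberg gives
\[
E(u)\ge \tfrac12\|\nabla u\|_2^2+\tfrac12\underline V\mu^2-C_{N,q}\mu^{q(1-\gamma_q)}\|\nabla u\|_2^{q\gamma_q},\qquad q\gamma_q>2 .
\]
We split the constraint by the set $A_k=\{\|\nabla u\|_2^2\le k\}$. On $\partial A_k$ the energy is at least $\tfrac k2+\tfrac12\underline V\mu^2-C\mu^{q(1-\gamma_q)}k^{q\gamma_q/2}$. We then compare this with $\sup$ over small-gradient states, which is bounded by $\tfrac12\lambda_1(V)\mu^2$ plus small terms when evaluated at $\mu\varphi_1$ suitably dilated, and we use $\lambda_1(V)\le\lambda_{ess}=0$. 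Optimizing in $k$ should give an explicit $\mu_0=\mu_0(N,q,\underline V)$. Below $\mu_0$ we get a strict gap between a low-energy point $u_0\in A_{k}$ (a radial function with small gradient and energy near $\tfrac12\lambda_1\mu^2$, or a local minimizer if one exists) and $\partial A_{2k}$. At the same time, $E(\mu\,t^{N/2}\varphi(t\cdot))\to-\infty$ as $t\to\infty$. The mountain pass level $c_\mu$ is defined over paths in $H^1_{\mu,\rad}$ joining $u_0$ to such a point.

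\textbf{Bounded PS sequence with Morse information.} We do not use Pohozaev. Instead we apply the abstract minimax principle with second-order information (Fang--Ghoussoub / Borthwick--Chang--Jeanjean--Soave type) along the monotonicity trick: we consider $E_\rho=\tfrac12\|\nabla u\|^2+\tfrac12\int Vu^2-\tfrac\rho q\|u\|_q^q$ for $\rho\in[1/2,1]$. For a.e.\ $\rho$ this yields a bounded PS sequence with approximate Morse index $\le1$. Radial compactness (Strauss) handles the $L^q$ term. The Lagrange multipliers $\lambda_n$ are bounded, so we get a weak limit $u_\rho$. The task is to show $u_\rho$ has mass $\mu$, i.e.\ that $\lambda_\rho>-\lambda_{ess}=0$ after normalization. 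Since $u_\rho\ge0$ solves the equation with $-\Delta+V+\lambda_\rho$ having $u_\rho$ in its kernel, the inequality $\lambda_\rho>-\lambda_1(V)$ follows once $u_\rho\neq0$, because a positive solution forces $\lambda_\rho+\lambda_1<0$ to be impossible. Strict positivity of $\lambda_\rho+\lambda_1$, together with the gap $\lambda_1\le\lambda_{ess}$, then gives convergence in $L^2$. Here $\lambda_\rho>-\lambda_{ess}$ follows from the Morse bound, since if $\lambda_\rho\le-\lambda_{ess}$ the quadratic form would be negative on a two-dimensional radial subspace, at the level bounds fixed by $\mu<\mu_0$.

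\textbf{Passing $\rho\to1$ (main obstacle).} We obtain solutions $u_{\rho_n}$ with Morse index $\le1$, and need a uniform $H^1$ bound. We will argue by contradiction with a radial blow-up analysis. If $\|u_n\|_\infty\to\infty$, we rescale around the maximum point. Radiality forces the maximum either to stay near the origin or to escape to infinity along a sphere. In the first case the limit is a positive, bounded, finite-Morse-index solution of $-\Delta w=w^{q-1}$ (or $-\Delta w+w=w^{q-1}$ after normalizing $\lambda$), and we rule it out by Liouville theorems for $q<2^*$. If instead $\lambda_n\to\infty$, we compare the mass with $\lambda_n^{\frac{N}{2}-\frac{2}{q-2}}\to\infty$ in the supercritical regime, which contradicts the fixed mass $\mu$. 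Concentration on spheres is excluded because it produces a limit profile in effectively one dimension with infinite Morse index or infinite mass. Once bounded, the same compactness argument at $\rho=1$ gives the positive radial solution at level $c_\mu$ with $\lambda>-\lambda_1(V)$. Making the thresholds in this contradiction quantitative, so that $\mu_0$ depends only on $N,q,\underline V-\lambda_{ess}$, is the delicate point.
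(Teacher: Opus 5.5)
Your first two stages broadly follow the paper, but the passage $\rho_n\to1^-$ has a genuine gap. The claim that concentration on spheres yields a one-dimensional profile with infinite Morse index or infinite mass is false.

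If $\lambda_n^{1/2}a_n\to+\infty$, rescale by $\tilde\epsilon_n=u_n(a_n)^{-(q-2)/2}$. The profile converges to a positive solution of $-U''+\beta U=U^{q-1}$ on $\R$ with $\max U=U(0)=1$. The uniform Morse bound only rules out $\beta>2/q$, which gives periodic profiles with infinite Morse index (Proposition \ref{lemma lagrange phase plane}). What survives is $\beta=2/q$. After rescaling by $\lambda_n^{-1/2}$, this is the homoclinic soliton $\phi$ of \eqref{Eq scaled convergence sphere}, which lies in $H^1(\R)$, decays exponentially and has finite Morse index.

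The mass bounds do not exclude it either. A layer at radius $a_n$ carries mass of order $(\lambda_n^{1/2}a_n)^{N-1}\lambda_n^{\frac N2\cdot 0+\frac{2}{q-2}-\frac N2}$, which can stay in $[\mu_1,\mu_2]$ for a suitable growth of $\lambda_n^{1/2}a_n$. Moreover $\frac12\int\phi'^2-\frac1q\int\phi^q=\frac{q-6}{2(q+2)}\int\phi^2$, so for $q<6$ (in particular whenever $N\ge3$) the layer's energy is negative. Hence $E_{\rho_n}(u_n)\le c_1$ is no obstruction by itself.

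The missing idea is the paper's localized one-dimensional Pohozaev identity (Lemma \ref{one Pohozaev}). You test the radial ODE against $ru'$ and against $u$ on an interval $I_n=[\frac12\underline{K}a_n^i,\frac32\overline{K}a_n^i]$ that encloses the whole cluster of blow-up radii comparable to $a_n^i$. You then multiply by $\lambda_n^{\frac12-\frac{q}{q-2}}$, let $n\to\infty$, and then let $R\to\infty$. Using $\int\phi^q=\frac{2q}{q+2}\int\phi^2$, this gives $|J_i|\frac{(N-1)(q-2)}{q+2}\int_\R\phi^2=0$. The nonzero factor $N-1$ comes precisely from the curvature term $\frac{N-1}{r}u'$.

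Carrying this out needs three ingredients your plan does not provide:
\begin{itemize}
\item There are at most $m^*$ blow-up sequences, because each yields a negative direction of $Q_V^N(\cdot\,;u_n)$ supported near it, and these supports are disjoint.
\item Away from the blow-up radii, a comparison principle gives the exponential decay $u_n\le\epsilon\lambda_n^{1/(q-2)}e^{\gamma R}\sum_i e^{-\gamma\lambda_n^{1/2}|r-a_n^i|}$ (Theorem \ref{theo sec blow}). This kills the boundary terms and the far-field integrals.
\item The bound $\|\nabla u_n\|_2\lesssim\lambda_n^{1/2}$, derived from the energy and mass bounds as in \eqref{eq:gradient boundedness Rn}, kills the term $\int Vruu'$. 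This is where $q<2^*$ enters for $N\ge3$.
\end{itemize}
The same decay is what your mass argument for the spike at the origin silently requires: you must show that $\lambda_n^{\frac N2-\frac2{q-2}}\int_{\R^N\setminus B(0,a_n+R\lambda_n^{-1/2})}u_n^2$ stays bounded.

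Three smaller points.
\begin{itemize}
\item At fixed $\rho$, the Weyl-sequence argument behind the Morse information only gives $\lambda_\rho\ge-\lambda_{ess}(V)$, not a strict inequality. You therefore still have to prove $u_\rho\ne0$. The paper does this using $c_\rho>\frac12\mu^2\lambda_{ess}(V)$: if $u_\rho=0$, then $c_\rho=-\frac12\lambda_\rho\mu^2$, hence $\lambda_\rho<-\lambda_{ess}(V)$, a contradiction.
\item You should also check that $\mu\varphi_1$ lies inside the well; the paper does this by a scaling comparison.
\item $\mu_0$ plays no role in the blow-up analysis. It only enters the mountain pass geometry, so there is nothing there to make quantitative.
\end{itemize}
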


\begin{theorem}\label{theo: local min}
Assume that $(V0)-(V2)$ holds.  Let $\mu_0 >0$ be as in Theorem \ref{theo:main}. For any $\mu \in (0, \mu_0),$ there exists a positive radial solution to Equation \eqref{EQ:Rn} under the mass constraint \eqref{eq:mass constraint} which is a local minimizer of $E$ restricted to $H^1_\mu(\R^N)$, whose energy level is strictly below the one of the solution obtained in Theorem \ref{theo:main}. Moreover, its associated Lagrange multiplier also satisfies $\lambda > - \lambda_1(V).$
\end{theorem}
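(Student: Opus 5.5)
We propose to obtain the local minimizer by minimizing $E$ over a set of the form
\[
A_{\rho}:=\{u\in H^1_\rad(\R^N)\cap H^1_\mu(\R^N)\colon \|\nabla u\|_2^2\le \rho\},
\]
for a radius $\rho=\rho(\mu)$ chosen in the construction of the mountain pass geometry used for Theorem \ref{theo:main}.

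The first step is to choose $\rho$ by the Gagliardo--Nirenberg inequality \eqref{Eq GN}. For $\mu<\mu_0$ we want
\[
m_\mu:=\inf_{A_\rho}E<\inf_{\partial A_\rho}E ,
\]
where $\partial A_\rho=\{\|\nabla u\|_2^2=\rho\}$. A natural test function is $u=\mu\varphi_1/\|\varphi_1\|_2$, with $\varphi_1$ the positive radial eigenfunction given by (V2). It gives
\[
E(u)\le \tfrac12\lambda_1(V)\mu^2-\tfrac1q\|u\|_q^q<\tfrac12\lambda_1(V)\mu^2 .
\]
On $\partial A_\rho$ we have
\[
E\ge \tfrac12\rho+\tfrac12\underline V\mu^2-C\mu^{q(1-\gamma)}\rho^{q\gamma/2},\qquad \gamma=\tfrac{N(q-2)}{2q}.
\]
Since $\lambda_1(V)\le\lambda_{ess}(V)$, the strict inequality reduces to
\[
\tfrac12\rho-C\mu^{q(1-\gamma)}\rho^{q\gamma/2}>\tfrac12\bigl(\lambda_{ess}(V)-\underline V\bigr)\mu^2 .
\]
Taking $\rho$ proportional to $\mu^2$ times a constant depending on $\lambda_{ess}(V)-\underline V$, this holds precisely for $\mu$ small, and this fixes $\mu_0$. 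Because the gap is strict, $m_\mu$ is also strictly below the mountain pass level $c_\mu\ge\inf_{\partial A_\rho}E$ of Theorem \ref{theo:main}. We also need $u\in A_\rho$, i.e. $\|\nabla u\|_2^2\le\rho$. Since $\|\nabla\varphi_1\|_2^2$ is not controlled uniformly, we may have to take $\rho$ larger or replace the test function by a scaled radial function; this must be checked against the explicit $\mu_0$.

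The second step is compactness of a minimizing sequence $(u_n)\subset A_\rho$. Replacing $u_n$ by $|u_n|$, we may assume $u_n\ge0$. By Ekeland's principle, and since the minimizers stay away from $\partial A_\rho$, we may assume $(u_n)$ is a bounded Palais--Smale sequence for $E|_{H^1_\mu}$ in the radial space, with multipliers $\lambda_n$ bounded. By the Strauss compact embedding $H^1_\rad\hookrightarrow L^q$ (valid for $N\ge2$ and $2<q<2^*$), $u_n\rightharpoonup u$ with $\|u_n\|_q\to\|u\|_q$. The potential term is not weakly continuous, since $V$ need not decay. Loss of mass is also possible, so we only get $\|u\|_2\le\mu$ a priori.

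The third step is to rule out loss of mass, and this is the main obstacle. Write $u_n=u+v_n$ with $v_n\rightharpoonup0$. Then $\|v_n\|_q\to0$, and the quadratic part of the energy splits. Since $v_n$ escapes to infinity, we expect
\[
\liminf\Bigl(\|\nabla v_n\|_2^2+\int V v_n^2\Bigr)\ge\lambda_{ess}(V)\,\lim\|v_n\|_2^2 .
\]
Here we would use the spectral characterization: weakly null sequences see only the essential spectrum, via a cutoff/Persson type argument. If $\|u\|_2=\nu<\mu$, then
\[
m_\mu\ge E(u)+\tfrac12\lambda_{ess}(V)(\mu^2-\nu^2).
\]
To contradict this we must show $m_\mu<\tfrac12\lambda_{ess}(V)\mu^2$ when $u=0$. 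This follows from the test function above, since $\lambda_1<\lambda_{ess}$, or, if $\lambda_1=\lambda_{ess}$, from the strictly negative $-\frac1q\|\cdot\|_q^q$ term. For $0<\nu<\mu$ we need a strict subadditivity of the type
\[
m_\mu<m_\nu+\tfrac12\lambda_{ess}(V)(\mu^2-\nu^2).
\]
We would obtain it by scaling: $E(tu)$ for $t=\mu/\nu>1$ decreases the nonlinear part strictly. One has to check that $tu$ still lies in $A_\rho$; this is controlled by taking $\rho$ with some slack.

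Once $\|u\|_2=\mu$, strong convergence in $H^1$ follows and $u$ is a minimizer in the interior of $A_\rho$. It is therefore a local minimizer, first on radial functions, and by the principle of symmetric criticality a critical point of $E$ on $H^1_\mu$. The maximum principle gives $u>0$. The local minimality on $H^1_\mu$ itself follows in the same way via Schwarz-type reasoning or directly from the radial Morse information. Finally, test the equation against $\varphi_1>0$ and use $|u|^{q-2}u>0$:
\[
(\lambda_1(V)+\lambda)\int u\varphi_1=\int u^{q-1}\varphi_1>0,
\]
so $\lambda>-\lambda_1(V)$.
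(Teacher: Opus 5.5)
Your argument is correct in outline, but it gets compactness by a different mechanism than the paper.

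\textbf{The paper's route.} The paper minimizes over the open set $\Sigma_\mu$ of \eqref{defSigma}, takes a nonnegative minimizing sequence made Palais--Smale by Ekeland, and never analyzes the escaping mass. The weak limit $u$ is nonzero, since otherwise $E(u_n)\ge\frac12\lambda_1(V)\mu^2+o_n(1)$, contradicting $\delta_\mu<\frac12\mu^2\lambda_1(V)$. Then $u>0$ solves the equation with the limiting multiplier $\lambda$, and Lemma \ref{lemma: weak potential conditions} gives $\lambda>-\lambda_1(V)$. This makes the quadratic form in \eqref{from weak to strong convergence} coercive on $H^1_\rad(\R^N)$, which forces strong convergence. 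So in the paper the multiplier inequality \emph{is} the compactness argument, recycled from the proof of Theorem \ref{theo main aproximation} (no Morse information is needed here).

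\textbf{Your route.} You argue at the level of energies, concentration-compactness style. You split $E(u_n)=E(u)+\frac12 q(v_n)+o_n(1)$ and use $\liminf q(v_n)\ge\lambda_{ess}(V)\lim\|v_n\|_2^2$ for weakly null $v_n$. That bound is correct, and a quicker proof than Persson is that the spectral projection of $-\Delta+V$ onto $(-\infty,\lambda_{ess}(V)-\delta)$ has finite rank, so it kills weakly null sequences. You then conclude by strict subadditivity via scaling. This avoids Palais--Smale sequences and positivity in the compactness step, at the price of more work. Your last step, testing against a nonnegative minimizer $\varphi_1$ of $\lambda_1(V)$ to get $(\lambda_1(V)+\lambda)\int u\varphi_1=\int u^{q-1}\varphi_1>0$, is correct. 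It is in fact a one-line proof of Lemma \ref{lemma: weak potential conditions}, simpler than the Dirichlet-ball argument used there.

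\textbf{Closing the items you left open.}

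\emph{Choice of $\rho$.} Take $\sqrt\rho=t_\mu$, the maximizer of $f_\mu$ in the proof of Lemma \ref{lemma: the mountain pass}. Then $A_\rho=\overline{\Sigma_\mu}$, and your boundary condition is exactly $f_\mu(t_\mu)>0$, i.e.\ $\mu<\mu_0$ with $\mu_0$ as in \eqref{eq mu infinity}. The bound $c_1\ge\inf_{\partial\Sigma_\mu}E$ then comes from that lemma. A multiple of $\mu^2$ fixed in advance only gives a smaller threshold.

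\emph{The test function.} It needs no adjustment. Set $\kappa_\nu:=1-\frac{2C_{N,q}}{q}\nu^{\theta_q}\rho^{(\xi_q-2)/2}$. The boundary condition reads $\kappa_\mu\rho>(\lambda_{ess}(V)-\underline V)\mu^2$, which forces $0<\kappa_\mu\le 1$ and hence $\rho>(\lambda_{ess}(V)-\underline V)\mu^2$. Since $\|\nabla\varphi_1\|_2^2=\int(\lambda_1(V)-V)\varphi_1^2\le(\lambda_1(V)-\underline V)\|\varphi_1\|_2^2$, you get $\|\nabla u_0\|_2^2\le(\lambda_1(V)-\underline V)\mu^2<\rho$.

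\emph{The scaling step.} Extra slack in $\rho$ is not available without changing $\mu_0$, and $t=\mu/\nu$ is unbounded as $\nu\to0$. Instead, scale the weak limit $u$ itself.
\begin{itemize}
\item Using $m_\mu<\frac12\lambda_1(V)\mu^2$, the splitting gives $E(u)\le m_\mu-\frac12\lambda_{ess}(V)(\mu^2-\nu^2)<\frac12\lambda_{ess}(V)\nu^2$.
\item Then \eqref{Eq GN} together with $\|\nabla u\|_2^2\le\rho$ yields $\kappa_\nu\|\nabla u\|_2^2<(\lambda_{ess}(V)-\underline V)\nu^2$.
\item Since $\kappa_\nu\ge\kappa_\mu$, this gives $\|\nabla(tu)\|_2^2<\rho$, so $tu\in A_\rho$.
\item Now $E(u)+\frac12\lambda_{ess}(V)(t^2-1)\nu^2\le m_\mu\le E(tu)<t^2E(u)$ gives $E(u)>\frac12\lambda_{ess}(V)\nu^2$, a contradiction.
\end{itemize}

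\emph{Local minimality on the full space.} Drop the claim of local minimality on the full $H^1_\mu(\R^N)$. Schwarz symmetrization does not decrease $\int Vu^2$ for a general radial $V$, and radial Morse information says nothing about non-radial directions. What your argument gives, like the paper's (where $\Sigma_\mu\subset H^1_\rad(\R^N)$), is local minimality on $H^1_{\mu,\rad}(\R^N)$. By symmetric criticality, it is also a critical point on $H^1_\mu(\R^N)$.
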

Note that 
combining Theorem 
\ref{theo:main} and Theorem \ref{theo: local min}, we obtain for any $\mu \in (0, \mu_0)$ the existence of two solutions for Equation $\eqref{EQ:Rn}$ whose $L^2(\R^N)$ norm is exactly $\mu$. 

Before going any further, we would like to make some comments on our assumptions. 
\begin{remark}\label{R1}
    The threshold mass value $\mu_0 >0$ is explicit and is given in \eqref{eq mu infinity}. Roughly speaking, it appears when proving that, for $\mu>0$ sufficiently small, the functional $E$ restricted to ${H_\mu^1(\R^N)}$ admits a geometry of local minimum. As we shall succeed proving that a local minimum exists this will prove Theorem \ref{theo: local min}. Since we are in a mass supercritical case, the existence of a geometry of local minima implies that $E$ has a geometry of mountain pass (a MP geometry for short). As it will be clear from our proofs, the limitation $\mu \in (0, \mu_0)$ is only used to prove these geometries and not to handle compactness issues. 
\end{remark}

\begin{remark}\label{R2}
  Assumption $(V1)$ implies that solutions to Equation $\eqref{EQ:Rn}$ are in $C^{1,\alpha}_\loc(\R^N)\cap W^{2,\infty}_\loc(\R^N).$ Such regularity result is explicitly used in Lemma \ref{Lemma maxima bounds} below and moreover is sufficient to use strong maximum principles 
	or a comparison principle for Schr\"odinger operators in the proof of Theorem \ref{theo sec blow}.
\end{remark}

\begin{remark}\label{R3}
Assumption $(V2)$ holds if, for example, there exists a
$\varphi_0 \in H^1_{\rad}(\R^N) \backslash \{0\}$ such that
$$\frac{\int |\nabla \varphi_0|^2+ V\varphi_0^2}{\int|\varphi_0|^2}\leq \lambda_{ess}(V).$$ Indeed, if the inequality is strict then $\lambda_1(V) < \lambda_{ess}(V)$ and in that case \cite[Th XIII.48]{ReSi78} yields that $\lambda_1(V)$ is achieved by a positive eigenfunction. Otherwise we have that $\lambda_1(V) = \lambda_{ess}(V)$ and such $\varphi_0 \in H^1_{\rad}(\R^N) \backslash \{0\}$ is a minimizer for $\lambda_1(V)$. 
\end{remark}

\begin{remark}\label{R4}
 If one assumes that $ \lim_{|x| \to \infty} V(x) =0$, then $\lambda_{ess}(V)=0$. Indeed it is straightforward to check that the operator
\begin{align*}
        \mathcal{V}\colon H^2_\rad(\R^N)&\to L^2_\rad(\R^N)\\
        \psi&\mapsto V\psi,
    \end{align*}
		is relatively $(-\Delta)$-compact (the definition can be found in \cite[Definition 14.1]{HiSi}) and then, see \cite[Theorem 14.6]{HiSi}, that $\sigma_\ess(-\Delta +V) = \sigma_\ess(-\Delta )= [0, + \infty).$ However this is only a sufficient condition and $\lambda_{ess}(V) =0$ may be true without the existence of a limit for $V$ at infinity. 
\end{remark}

\begin{remark}\label{R5}
If one assumes that $ \lim_{|x| \to \infty} V(x) =0$, then $\lambda_1(V) \leq 0$ and for Assumption $(V2)$ to hold the negative part of $V$ must be non-trivial. In particular, for $N\geq 3$ the Cwikel-Lieb-Rozenblum theorem tells us that, to have $\lambda_1(V) <0$, a necessary condition is for $\|V_-\|_{\frac{N}{2}}$ to be above a certain positive threshold, see \cite[Theorem 4.31]{FrLaWe23}.  In the case $N=2$, however, where having $\|V_-\|_\infty\neq 0$ is a sufficient for the existence of a negative eigenvalue.

\end{remark}

\begin{remark}\label{R6}
The assumption that the potential $V$ is radial allows to work in the subspace of radial functions $H^1_{\rad}(\R^N)$ of $H^1(\R^N)$. As such, we benefit from the compact inclusion of $L^s(\R^N)$ into $H^1_{\rad}(\R^N)$ for any $s \in (2, 2^*)$, property which is helpful to control the loss of mass at infinity. However, several of our results still hold without this restriction. In turn the fact that we work in $H^1_{\rad}(\R^N)$ will require new arguments in the blow-up analysis that we will perform in order to prove Theorem \ref{theo:main}.
\end{remark}

Turning now to the proof of Theorem \ref{theo:main}, in order to obtain the existence of the required a critical point, it appears necessary to show that, at least one PS sequence for $E$ constrained to ${H_\mu^1(\R^N)}$ at the MP level, is bounded. To do so, we shall borrow a general strategy which was developed in a series of works \cite{BoChJeSo23, CaGaJeTr25, ChJeSo24} devoted to the existence of prescribed $L^2$ norm solutions for Schr\"odinger equations set on metric graphs. The common feature with our problem being the lack of a useful Pohozaev identity due to the impossibility to use scaling on graphs.  As a first step we consider, for $\rho\in [\frac12,1]$, the problem
\begin{equation}\label{EQ:bounded approximation}
\begin{cases}
-\Delta u+V(x)u+\lambda u=\rho|u|^{q-2}u,\quad u \in H^1_\rad(\R^N),\\
\|u\|_{L^2(\R^N)}=\mu,
\end{cases}
\end{equation}
with $N\geq 2$, $V\in L^\infty$, and $2+\frac{4}{N}<q<2^*$, whose associated energy functional is now
\begin{equation}
\label{eq:functional approximation}
E_{\rho}(u)=\frac{1}{2}\int_{\R^N}|\nabla u|^2+\frac{1}{2} \int_{\R^N}V(x)u^2-\frac{\rho}{q}\int_{\R^N}|u|^q
\end{equation}
constrained to $$H^1_{\mu,\rad}(\R^N):=\{u\in H^1_\rad(\R^N)\colon \|u\|_{L^2(\R^N)}=\mu\}.$$ 
The underlying idea is to use the monotonicity trick, in terms of the formulation given in \cite{Je99},
in order to obtain, for almost every $\rho\in [\frac12,1]$, a bounded PS sequence at the corresponding MP level. Actually we shall directly make use of \cite[Theorem 1.5]{BoChJeSo24}, recalled here in Proposition \ref{lemma:result critical point theory}, which yields the existence of a bounded PS sequence with additional Morse type information. We will denote the Morse index of a function $u$ as $\morse(u)$, for a precise definition refer to Definition \ref{def Morse index}. This information will be centrally used in a blow-up analysis. 

Note that to ensure that the limit of this PS sequence belongs to the mass constraint, we must prove its strong convergence. This is one of the central difficulties in the search of solutions having a prescribed $L^2$ norm and, in general, the strategy resides in first proving that the Lagrange multiplier associated with the weak limit of the PS sequence, which we will call $\lambda_\rho$, satisfies an appropriate \emph{strict inequality}.
In addition to checking the boundedness of (some particular) PS sequences, it is also at this point that the Pohozaev identity proved particularly useful in many cases. Here, we shall use spectral arguments to show that  $\lambda_{\rho} > - \lambda_1(V)$. 
 Having this  information and using that, thanks to the radial nature of the potential, we can work in the subspace of radially symmetry functions we then  deduce the strong convergence and obtain the following result.
\begin{theorem}\label{theo main aproximation}
    Assume that $(V0)-(V2)$ holds and let $\mu \in (0, \mu_0)$ where $\mu_0 >0$ is defined in Theorem \ref{theo:main}. Then, for almost every $\rho\in[\frac12,1]$, there exists a positive radial solution $u_{\rho} \in H^1_{\mu}(\R^N)$ to Problem \eqref{EQ:bounded approximation}. This solution lies at a MP level, denoted by $c_{\rho}$, which is defined in \eqref{eq:MP definition}. It has an associated  Lagrange multiplier strictly above $-\lambda_1(V)$ and its Morse index $\morse(u_{\rho})$ is less than $2$. 
\end{theorem}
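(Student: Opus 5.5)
The plan follows the scheme sketched just before the statement.

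\emph{Step 1 (uniform geometry).} For $\mu\in(0,\mu_0)$, use Gagliardo--Nirenberg together with $\lambda_1(V)\ge\underline V$ to build a set $A_k=\{u\in H^1_{\mu,\rad}(\R^N):\|\nabla u\|_2^2\le k\}$ on which $E_\rho$ is bounded below. The inequality $\inf_{A_{k/2}}E_\rho<\inf_{\partial A_k}E_\rho$ should hold uniformly in $\rho\in[\frac12,1]$; this is exactly where $\mu_0$, which depends on $\underline V-\lambda_{\ess}(V)$, enters. A candidate low-energy point in $A_{k/2}$ is the normalized radial eigenfunction $\mu\varphi_1$ given by (V2), whose energy is close to $\frac{\mu^2}{2}\lambda_1(V)$. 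Using the mass supercritical scaling $u_t(x)=t^{N/2}u(tx)$, one then finds $w$ outside $A_k$ with $E_\rho(w)<\inf_{A_{k/2}}E_\rho$. This gives the MP level
\[
c_\rho=\inf_{\gamma\in\Gamma}\max_{t\in[0,1]}E_\rho(\gamma(t))>\max\{E_\rho(\gamma(0)),E_\rho(\gamma(1))\},
\]
where the paths in $\Gamma$ are independent of $\rho$. The map $\rho\mapsto c_\rho$ is nonincreasing, so it is differentiable for a.e.\ $\rho$.

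\emph{Step 2 (bounded PS sequence with Morse information).} Apply Proposition \ref{lemma:result critical point theory} (\cite[Theorem 1.5]{BoChJeSo24}). For a.e.\ $\rho$ it yields a bounded PS sequence $(u_n)\subset H^1_{\mu,\rad}$ at level $c_\rho$ with approximate Morse index at most $2$ (one direction from the path, one from the constraint). Replacing $\gamma$ by $|\gamma|$ lets us take $u_n$ with $\|u_n^-\|\to0$. Define $\lambda_n=-\frac1{\mu^2}E_\rho'(u_n)u_n$, which is bounded. Pass to a subsequence with $u_n\rightharpoonup u_\rho$ in $H^1_\rad$, $\lambda_n\to\lambda_\rho$, and $u_n\to u_\rho$ in $L^q$ by Strauss compactness. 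The limit $u_\rho$ solves the equation with multiplier $\lambda_\rho$. It is nontrivial because $c_\rho$ exceeds the energy of the linear part on the constraint. The Morse bound $\morse(u_\rho)\le2$ passes to the limit by lower semicontinuity of the quadratic form, since the $L^q$ terms converge strongly.

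\emph{Step 3 (strict inequality $\lambda_\rho>-\lambda_1(V)$).} This is the main obstacle. Testing the equation against the positive eigenfunction $\varphi_1$ gives
\[
(\lambda_1(V)+\lambda_\rho)\int u_\rho\varphi_1=\rho\int u_\rho^{q-1}\varphi_1>0,
\]
provided $u_\rho\ge0$ and $u_\rho\neq0$, which we have. Hence $\lambda_\rho>-\lambda_1(V)$ directly. One must still make sure that the PS multipliers do not lose this in the limit, but we only need it for the limit.

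\emph{Step 4 (strong convergence and positivity).} Let $v_n=u_n-u_\rho$. Subtracting the equations and using $L^q$ convergence gives
\[
\int|\nabla v_n|^2+Vv_n^2+\lambda_\rho v_n^2\to0 .
\]
Since $v_n\rightharpoonup0$, the spectral lower bound for $-\Delta+V$ on weakly null sequences is of essential type: $\liminf\int(|\nabla v_n|^2+Vv_n^2)\ge\lambda_{\ess}(V)\liminf\|v_n\|_2^2$. So we need $\lambda_\rho>-\lambda_{\ess}(V)$, which follows from Step 3 because $\lambda_1(V)\le\lambda_{\ess}(V)$. Then $v_n\to0$ in $L^2$ and $\|u_\rho\|_2=\mu$.

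The delicate point is justifying the inequality with $\lambda_{\ess}$ for $V\in L^\infty$ without decay. I would use a Persson-type characterization: on the complement of the eigenfunctions below $\lambda_{\ess}-\varepsilon$ the form is $\ge(\lambda_{\ess}-\varepsilon)\|\cdot\|^2$, and the finitely many eigenfunction components of $v_n$ vanish by weak convergence. If some eigenvalue accumulates at $\lambda_{\ess}$, this holds for each $\varepsilon>0$, which suffices.

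Finally, the strong maximum principle (Remark \ref{R2}) applied to $u_\rho\ge0$, $u_\rho\not\equiv0$, gives $u_\rho>0$.
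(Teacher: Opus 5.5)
Your proposal has a genuine gap in Step 2, at the claim that $u_\rho\not\equiv 0$ ``because $c_\rho$ exceeds the energy of the linear part on the constraint''. Steps 3--4 and the positivity of $u_\rho$ all depend on this claim.

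Suppose $u_\rho\equiv 0$. Then $u_n\to 0$ in $L^q$, and almost-criticality gives $\int|\nabla u_n|^2+Vu_n^2=-\lambda_\rho\mu^2+o(1)$, hence $c_\rho=-\frac12\lambda_\rho\mu^2$. In other words, $u_n/\mu$ is just a Weyl-type sequence for $-\Delta_{\rad}+V$ at the spectral value $2c_\rho/\mu^2$.

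Knowing that $c_\rho$ exceeds $\frac12\mu^2\lambda_1(V)$, or even $\frac12\mu^2\lambda_{\ess}(V)$, cannot exclude this. The value $2c_\rho/\mu^2$ may lie in $\sigma_{\ess}(-\Delta+V)$, which is $[0,+\infty)$ for instance when $V\to0$ at infinity. Your essential-spectrum bound from Step 4 only gives $\lambda_\rho\le-\lambda_{\ess}(V)$ in this scenario, which contradicts nothing. No energy comparison alone yields nontriviality.

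The missing idea is to use the Morse information on the PS sequence itself, not just on its limit, to bound the multipliers from below.

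\begin{itemize}
\item For $\lambda<-\lambda_{\ess}(V)$, the spectral subspace of $-\Delta_{\rad}+V$ below $-\lambda$ is infinite dimensional (Weyl sequences, Lemma \ref{louis2}).
\item Since $-(q-1)\rho u_n^{q-2}\varphi^2\le 0$, on any $3$-dimensional piece of that subspace one has $E_\rho''(u_n)[\varphi,\varphi]+\lambda\|\varphi\|_2^2\le -a\|\varphi\|^2$, uniformly in $n$.
\item Comparing this with $\tilde m_{\zeta_n}(u_n)\le 1$ via Lemma \ref{lemma:Louis1} gives $\lambda_\rho\ge-\lambda_{\ess}(V)$.
\end{itemize}

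Correspondingly, Step 1 must deliver $c_\rho>\frac12\mu^2\lambda_{\ess}(V)$, not merely $\inf_{A_{k/2}}E_\rho<\inf_{\partial A_k}E_\rho$. This is the actual reason $\underline V-\lambda_{\ess}(V)$ enters $\mu_0$. You also need to check that $\mu\varphi_1/\|\varphi_1\|_2$ really lies in the low-gradient set; the paper does this by a comparison in the mass variable. With both facts in place, vanishing forces $\lambda_\rho=-2c_\rho/\mu^2<-\lambda_{\ess}(V)$, a contradiction.

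Once nontriviality is secured, the rest of your plan goes through. Testing the equation against the positive eigenfunction $\varphi_1$ in Step 3 is correct and is a shorter substitute for the paper's Dirichlet-eigenvalue argument. The Persson-type argument in Step 4 is unnecessary: $\lambda_\rho>-\lambda_1(V)$ already makes $\int|\nabla w|^2+(V+\lambda_\rho)w^2$ coercive on $H^1_{\rad}(\R^N)$.
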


To derive Theorem \ref{theo:main} from Theorem \ref{theo main aproximation} we choose a sequence $\{\rho_n\} \subset [\frac12,1]$ with $\rho_n \to 1^-$ for which Theorem   \ref{theo main aproximation} provides a sequence $\{u_{\rho_n}\} \subset H^1_{\mu,\rad}(\R^N)$ of solutions to \eqref{EQ:bounded approximation} where $\rho = \rho_n$. 
It is readily seen that if such sequence converges (strongly), its limit will be a solution as searched for in Theorem \ref{theo:main}.
Exploiting the analysis performed in the derivation of Theorem \ref{theo main aproximation}, this convergence issue can be reduced first to showing the boundedness of $\{u_{\rho_n}\}\subset H^1(\R^N)$ and then further to proving the boundedness of the associated sequence of Lagrange multipliers $\{\lambda_{\rho_n}\} \subset \R$. This latter condition will be the one proven through means of a blow-up analysis in the spirit of \cite{DHR,EsPe11}. Actually, it will be a consequence of more general result of independent interest:
\begin{theorem}\label{theo maxima to 0}
Let $(V0)-(V1)$ hold and {$\{(\rho_n,\lambda_n,u_n)\} \subset [\frac12,1]\times\R\times H_\rad^1(\R^N)$} be a sequence of solutions to
\begin{equation}\label{L_EQ:bounded approximation}
\begin{cases}
-\Delta u+V(x)u+\lambda u=\rho u^{q-1}\\
u>0,
\end{cases}
\end{equation}
 such that $\rho_n\to 1^-$.
Suppose moreover there exist constants $\mu_1,\mu_2>0$ and $c_1\in\R$ such that 
\begin{equation}\label{eq mass energy unif bound}
    \mu_1\leq\|u_n\|_{L^2(\R^N)}\leq \mu_2\quad \quad \text{and}\quad \quad E_{\rho_n}(u_n)\leq c_1
\end{equation}
where $E_{\rho_n}$ is the associated energy functional defined in \eqref{eq:functional approximation}.
Suppose finally that the Morse index of the sequence $\{u_n\}$ is uniformly bounded, i.e. there exists $m^*<+\infty$ such that,
\begin{equation*}
\sup_{n\in \N} \morse(u_n) \leq m^*.
\end{equation*}
Then, \begin{equation*}
    \sup_{n\in \N} \lambda_n < +\infty.
\end{equation*}
\end{theorem}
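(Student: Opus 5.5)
We argue by contradiction: assume, up to a subsequence, $\lambda_n\to+\infty$, and derive an incompatibility with the bounds \eqref{eq mass energy unif bound} through a blow-up analysis in the spirit of \cite{DHR,EsPe11}, adapted to radial solutions with bounded Morse index.

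\textbf{Step 1: the maxima and the first blow-up.} Let $M_n:=\|u_n\|_\infty$, which is attained at some point thanks to the regularity recalled in Remark \ref{R2} and the decay of radial $H^1$ functions. At a maximum point $x_n$ we have $-\Delta u_n(x_n)\ge 0$, so $(V(x_n)+\lambda_n)u_n(x_n)\le \rho_n u_n(x_n)^{q-1}$, whence $M_n^{q-2}\ge \lambda_n-\|V\|_\infty\to+\infty$. We rescale by setting
\[
v_n(y):=M_n^{-1}u_n\bigl(x_n+M_n^{-(q-2)/2}y\bigr).
\]
Then $-\Delta v_n+M_n^{-(q-2)}(V+\lambda_n)v_n=\rho_n v_n^{q-1}$, $0<v_n\le 1=v_n(0)$, and $\lambda_n M_n^{-(q-2)}$ stays bounded. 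Elliptic estimates give $C^{1,\alpha}_\loc$ convergence to some $v\ge 0$ solving $-\Delta v+\kappa v=v^{q-1}$ with $\kappa\ge0$ and $v(0)=1$. The case $\kappa=0$ is excluded by the Liouville theorem for finite Morse index solutions (Farina/Bahri–Lions type), since $q<2^*$ and the Morse index passes to the limit. Hence $\kappa>0$ and $v$ is a ground-state-type profile. Because $u_n$ is radial, we must separate two cases. If $|x_n|M_n^{(q-2)/2}$ stays bounded, the bubble sits at the origin. Otherwise the concentration occurs on a sphere, and the limiting profile is invariant in $N-1$ tangential directions. It is therefore one-dimensional, or of infinite Morse index in the radial class, which contradicts the bound $\morse(u_n)\le m^*$ once we use the many disjoint almost-orthogonal negative directions produced along the sphere. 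This is where the radial setting requires a new argument, and I expect it to be the main obstacle: one must build about $|x_n|M_n^{(q-2)/2}\to\infty$ radially symmetric but disjointly supported test functions, or alternatively show that a one-dimensional concentration profile on a sphere has mass incompatible with $u_n\in H^1$.

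\textbf{Step 2: decay away from the bubbles.} With bounded Morse index, the standard argument gives finitely many (here, at most $m^*$) concentration points together with the uniform estimate
\[
u_n(x)\le C\,\lambda_n^{1/(q-2)}\,e^{-c\sqrt{\lambda_n}\,d_n(x)},
\]
where $d_n$ is the distance to the blow-up set. This follows from the fact that, outside the bubbles, $\rho_n u_n^{q-2}\le \lambda_n/2$ (otherwise a new bubble appears), so $u_n$ is a subsolution of $-\Delta w+(\lambda_n/2-\|V\|_\infty)w\le0$. The comparison principle then yields exponential decay.

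\textbf{Step 3: contradiction with the mass.} Summing the contributions of the bubbles, each carries mass
\[
\int u_n^2\approx M_n^{2}M_n^{-N(q-2)/2}\|v\|_2^2\sim \lambda_n^{\frac{2}{q-2}-\frac N2}.
\]
This tends to $0$ because $q>2+\frac4N$, and the exponential tails contribute even less. Thus $\|u_n\|_2\to0$, contradicting $\|u_n\|_2\ge\mu_1$. The energy bound $E_{\rho_n}(u_n)\le c_1$ is used only to control the contribution from the region where $u_n$ is small, via the identity
\[
E_{\rho_n}(u_n)=\Bigl(\tfrac12-\tfrac1q\Bigr)\rho_n\|u_n\|_q^q-\tfrac{\lambda_n}{2}\|u_n\|_2^2.
\]
This identity gives $\lambda_n\|u_n\|_2^2\lesssim \|u_n\|_q^q+|c_1|$, and an alternative route to the contradiction compares the growth rates of both sides on the bubbles.
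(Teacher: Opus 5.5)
Your handling of blow-up at the origin (your Step 3) is essentially the paper's Step 2 and is fine. The gap is in excluding concentration on spheres, which is the heart of the proof: neither route you propose works.

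The Morse index of Definition \ref{def Morse index} counts only \emph{radial} test functions. Negative directions localized tangentially along the sphere are therefore not admissible. For the same reason, the $N$-dimensional Morse index of your blow-up limit is not controlled; only the one-dimensional one is, via test functions $\tilde\eps_n^{1/2}\varphi((|x|-a_n)/\tilde\eps_n)$.

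After rescaling, radial test functions near $|x|=a_n$ are just functions of $s=\lambda_n^{1/2}(|x|-a_n)$. The bound $\morse(u_n)\le m^*$ then only rules out the periodic profiles and forces the limit to be the homoclinic $\phi$ of \eqref{Eq scaled convergence sphere} (Proposition \ref{lemma lagrange phase plane}). This $\phi$ has one-dimensional Morse index one, so each sphere costs exactly one radial negative direction. That is perfectly compatible with $\morse(u_n)\le m^*$; it is precisely Theorem \ref{mainblow}(c).

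The mass route fails too. A sphere bubble carries mass $\approx|S^{N-1}|a_n^{N-1}\lambda_n^{\frac{2}{q-2}-\frac12}\int_\R\phi^2$, not $\lambda_n^{\frac{2}{q-2}-\frac N2}$. This stays of order one for suitable radii, e.g. $a_n\sim\lambda_n^{-\theta}$ with $\theta=\frac{6-q}{2(q-2)(N-1)}$, which lies in $(0,\frac12)$ precisely because $2+\frac4N<q<6$.

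Comparing growth rates in $E_{\rho_n}(u_n)=(\frac12-\frac1q)\rho_n\|u_n\|_q^q-\frac{\lambda_n}{2}\|u_n\|_2^2$ does not help either. Using $\int_\R\phi^q=\frac{2q}{q+2}\int_\R\phi^2$, a single sphere bubble gives $E_{\rho_n}(u_n)\approx\frac{q-6}{2(q+2)}\lambda_n\|u_n\|_2^2\to-\infty$ when $q<6$, which always holds for $N\ge 3$. This is consistent with $E_{\rho_n}(u_n)\le c_1$.

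The missing ingredient is the one-dimensional Pohozaev identity of Lemma \ref{one Pohozaev}. Multiply the radial ODE by $ru'$ and integrate in $dr$, not $r^{N-1}dr$, so that the drift term contributes $(N-1)\int (u')^2$ at leading order. Integrate over $I_n=[\frac12\underline{K}a_n^i,\frac32\overline{K}a_n^i]$, an interval enclosing all sphere maxima of comparable radius.

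\begin{itemize}
\item The boundary terms vanish by the exponential decay of Theorem \ref{theo sec blow}(b), since $\lambda_n^{1/2}a_n^i\to\infty$.
\item The term $\int Vru_nu_n'\,dr$ is bounded by \eqref{potential estimate}, i.e. by $C(a_n^i)^{2-N}\|u_n\|_2\|\nabla u_n\|_2$, with $\|\nabla u_n\|_2\lesssim\lambda_n^{1/2}$ from \eqref{eq gradient estimate}. This is where the energy bound is genuinely used, not merely for the small region as you suggest; for $N\ge3$ it also uses $q<2^*$.
\item Multiplying by $\lambda_n^{\frac12-\frac{q}{q-2}}$ and letting $n\to\infty$, then $R\to\infty$, gives $|J_i|\,\bigl[(N-\frac32-\frac1q)\int_\R\phi^q-(N-2)\int_\R\phi^2\bigr]=0$.
\end{itemize}

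Combined with \eqref{norm relation sphere}, the bracket equals $\frac{(N-1)(q-2)}{q+2}\int_\R\phi^2>0$, so $J_i=\emptyset$. Until you supply an argument of this kind, your proof excludes only concentration at the origin.
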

To prove Theorem \ref{theo maxima to 0} we shall proceed by contradiction and assume that $\lambda_n \to + \infty$.  Under this assumption it is readily seen, see Lemma \ref{Lemma maxima bounds}, that the sequence $\{u_n\} \subset L^{\infty}(\R^N)$ is necessarily unbounded and we develop a blow-up analysis that will be used to find a contradiction. This blow-up analysis, performed in Theorems \ref{mainblow} and \ref{theo sec blow}, see also Corollary \ref{all maximum points}, leads to the conclusion that, passing to the subsequence, a blow-up phenomenon is only possible either at the origin or at a finite number of radii. In addition, the solutions exponentially decay away from those blow-up points. Moreover, in order to obtain these intermediate results one may remark that Assumption \eqref{eq mass energy unif bound} has not been yet utilised. In fact, it only plays a role at the end of the proof of Theorem \ref{theo maxima to 0} to conclude that no blow-up may happen.

Compared to the blow-up analysis that is also performed in \cite{BoChJeSo23, CaGaJeTr25, ChJeSo24}, the fact that we work here in the subspace of radially symmetric functions presents new challenges. 
As $\lambda_n\to +\infty$, it is rather standard to have solutions of equations of the type of \eqref{L_EQ:bounded approximation} concentrating at a singular point, leading to the formation of the so called spike-layer solutions, or simply spikes. In the case of radial functions it is clear that spikes may only form at the origin. However, still in a radial context, if a sequence of local maximum does not converge to the origin what we actually obtain are solutions which concentrate around a sphere, phenomenon which is much less understood. First attempts to study concentration on higher dimensional sets can be found in \cite{AmMaNi03-1} and \cite{dPKoWe06} where conditions on the potential were derived in order to show the existence of radial solutions concentrating around spheres and curves for the related equation
\begin{equation*}
    -\epsilon\Delta u+V(x)u=u^p,\quad u>0, \quad p >1,
\end{equation*}
in $\R^N$ as $\epsilon\to 0.$ When it comes to solutions with (uniformly) bounded Morse index, the number of concentration sets usually remains bounded. This phenomenon is highlighted in \cite{EMSS}, see also \cite{EsPe11,PiVe17} for a related result for spikes. In \cite{EMSS}, under this additional condition, the authors consider the problem
\begin{equation}\label{LL_EQ:bounded approximation}
\begin{cases}
-\Delta u+ \lambda V(x)u = u^q \quad \mbox{in } \Omega\\
u>0, \quad \mbox{in } \Omega\\
u=0, \quad \mbox{on } \partial \Omega,
\end{cases}
\end{equation}
where $q >1$, $V$ is smooth, and $\Omega \subset \R^N$ is an annulus. In particular they explicit necessary conditions on an auxiliary potential for a blow-up to take place at a given radius. Let us also mention the work \cite{LiSo24} where \eqref{LL_EQ:bounded approximation} is considered in the special case $V \equiv 1$. There the focus is, as in our case, on the existence of multiplicity of normalized solutions. In our setting, even if we do benefit from some ideas introduced in \cite{EMSS}, the fact that Equation \eqref{EQ:Rn} is set on the full space $\R^N$ induces new features. Indeed, we must also consider the possibility that sequences of blow-up points escape to infinity or, on the contrary, accumulate around the origin. In that later case, one needs to distinguish two rates of accumulation leading to the analysis of distinct limit problems.
Having proved Theorems \ref{mainblow} and \ref{theo sec blow}, the fact that the additional assumption \eqref{eq mass energy unif bound} implies the boundedness of $\{\lambda_n\}$ is derived making use of a one-dimensional Pohozaev type identity, see Lemma \ref{one Pohozaev}.

\smallskip

The outline of the paper is the following. In Section \ref{section: preliminaries} we recall the necessary definitions and results that we shall need. In Section \ref{section:MP} 
we prove Theorem \ref{theo main aproximation} and Theorem \ref{theo:main} by supposing true Theorem \ref{theo maxima to 0}. Finally, in Section \ref{section: blow up} we perform the blow-up analysis which enables the derivation of Theorem \ref{theo maxima to 0}.
\medskip

{\bf Notations:} Throughout the article we use $\|\cdot\|_s$ to denote the usual $L^s$ norm when the function space is evident. 
For $a \geq 0$ and $R\geq 0$, we define
$$A(a,R) := \{x \in \R^N\colon a-R\leq |x| \leq a +R\}.$$
When $a>R$ the set is an annulus, whilst when $a\leq R$ the set becomes a ball; for the latter case we shall often use the notation $B(0,a + R) = A(a,R)$.
Finally, $\R^+$ will denote the set of strictly positive real numbers. 

\medskip

\section{Preliminaries}\label{section: preliminaries}

For $s\in (2,2^*),$ we denote by $C_{N,s}$ the optimal constant in the Gagliardo-Nirenberg inequality, i.e. the optimal constant only depending on $N$ and $s$ such that

\begin{equation}\label{Eq GN}
\|u\|_s^s\leq C_{N,s}\|u\|_2^{\theta_s}\|\nabla u\|_2^{\xi_s}
\end{equation}
for all $u\in H^1(\R^N)$, where
$$\theta_s:=\frac{2s-N(s-2)}{2}\quad \mathrm{and} \quad \xi_s:=\frac{N(s-2)}{2}.$$

We now introduce the definition of the Morse index used in the statement of Theorems \ref{theo main aproximation} and \ref{theo maxima to 0}. 
\begin{definition}\label{def Morse index}
Let $N\geq 2$ and let $u\in H_\loc^1(\R^N)$ be a radial weak solution to
$$-\Delta u + V(x) u + \lambda u = \rho |u|^{q-2}u,$$
where $V \in L_{\rad}^{\infty}(\R^N)$, $\lambda, \rho \in \R$ and $2<q <2^*$.
 The \emph{Morse index} of $u$, denoted $\morse(u)$, is the maximum
  integer $m$ such that there exists a subspace
  $\mathcal{V}\subset H^1_\rad(\R^N) \cap C_{c}^\infty(\R^N)$ of dimension $m$ with the property that for all $ \varphi \in \mathcal{V} \setminus\{0\}$
  there holds
  \begin{align}
Q_{V}^N(\varphi;u)&:= \int_{\R^N} |{\nabla\varphi}|^2 + (V(x)+\lambda - \rho(q-1)|u|^{q-2}
)|\varphi^2|   dx<0.\label{second differential no beta}
\end{align}
In the case that $V\equiv 0$ we will use the notation $Q_0^N(\varphi;u)$.
\end{definition}

In our proofs, we will have to consider limit problems posed on the real line. In that case we shall make use of the following definition of the Morse index.
\begin{definition}\label{def Morse index in one dimension}
Let $u\in H_\loc^1(\R)$ be a  weak solution of
$$-u''+V(r)u+\lambda u=\rho |u|^{q-2}u.$$
The \emph{Morse index} of $u$, still denoted $\morse(u)$, is the maximum
  integer $m$ such that there exists a subspace
  $\mathcal{V}\subset H^1(\R) \cap C_{c}^\infty(\R)$ of dimension $m$ with the property that for all $ \varphi \in \mathcal{V} \setminus\{0\}$
  there holds
  \begin{align}
Q_{V}^1(\varphi;u)&:= \int_{\R} |{\varphi'}|^2 + (V(x)+\lambda - \rho(q-1)|u|^{q-2}
)|\varphi^2|   dx<0.\label{second differential no beta 1D}
\end{align}
As before, in the case that $V\equiv 0$ we will use $Q_0^1(\varphi;u)$.
\end{definition}

We now turn to the abstract result from critical point theory \cite[Theorem 1.5]{BoChJeSo24}. In order to introduce it we need to put ourselves into some context. Let $\left(E,\langle \cdot, \cdot \rangle\right)$ and $\left(H,(\cdot,\cdot)\right)$ be two {\emph{infinite-dimensional}} Hilbert spaces and assume that $E\hookrightarrow H \hookrightarrow E'$,
with continuous injections.  For simplicity, we assume that the continuous injection $E\hookrightarrow H$ has norm at most $1$ and identify $E$ with its image in $H$.
Set \[ \begin{cases} \|u\|^2=\langle u,u \rangle,\\ |u|^2=(u,u),\end{cases}\quad u\in E,\]
and define for $\mu>0$ the set \[S_\mu= \{ u \in E\big|\, |u|^2=\mu^2 \}. \]
In the context of this paper, we shall have $E=H^1_\rad(\R^N)$, $H=L^2_\rad(\R^N)$ and $$S_\mu=H_{\mu,\rad}^1=\{u\in H^1_\rad:\|u\|_{L^2(\R^N)}=\mu\}.$$
Clearly, $S_{\mu}$ is a smooth submanifold of $E$ of codimension $1$. Furthermore its tangent space at a given point $u \in S_{\mu}$ can be considered as the closed subspace of codimension $1$ of $E$ given by:
\[  T_u S_{\mu}= \{v \in E \big|\, (u,v) =0 \}.\]
{ In the following definition, we denote by $\|\cdot\|_*$ and $\|\cdot\|_{**}$, respectively, the operator norm of $\mathcal{L}(E,\R)$ and of $\mathcal{L}(E,\mathcal{L}(E,\R))$.}

\begin{definition}\label{Holder-continuous}
Let $\phi : E \rightarrow \mathbb{R}$ be a $C^2$-functional on $E$ and $\alpha \in (0,1]$. We say that $\phi'$ and $\phi''$ are $\alpha$-H\"older continuous on bounded sets if for any $R>0$ one can find $M=M(R)>0$ such that, for any $u_1,u_2\in B(0,R)$:
\begin{equation*}
||\phi'(u_1)-\phi'(u_2)||_*\leq M ||u_1-u_2||^{\alpha}, \quad ||\phi''(u_1)-\phi''(u_2)||_{**} \leq M||u_1-u_2||^\alpha.
\end{equation*}
\end{definition}

\begin{definition}\label{def D}
Let $\phi$ be a $C^2$-functional on $E$. For any $u\in E$, we define the continuous bilinear map:
\[ D^2\phi(u)=\phi''(u) -\frac{\phi'(u)\cdot u}{|u|^2}(\cdot,\cdot). \]
\end{definition}

\begin{definition}\label{def: app morse_2}
Let $\phi$ be a $C^2$-functional on $E$. {For} any $u\in \M$ and $\theta >0$, we define
 the \emph{approximate Morse index} by
\[
\tilde m_\theta(u)= \sup \left\{\dim\,L\left| \begin{array}{l} \ L \text{ is a subspace of $T_u \M$ such that: }
D^2\phi(u)[\varphi, \varphi]<-\theta \|\varphi\|^2, \quad {\forall \varphi \in L \setminus \{0\}} \end{array}\right.\right\}.
\]
If $u$ is a critical point for the constrained functional $\phi|_{\M}$ and $\theta=0$, we say that this is the \emph{Morse index of $u$ as constrained critical point}.
\end{definition}

\begin{remark}\label{rem: morse connexions}
    Suppose $u$ is a critical point of the functional $\phi|_{S_\mu}.$ Then $D^2\phi(u)$ restricted to $T_uS_\mu$ coincides with the constrained Hessian of $\phi_{S_\mu}$ at $u$. 
\end{remark}

We now recall \cite[Theorem 1.5]{BoChJeSo24}.

\begin{proposition}\label{lemma:result critical point theory}
Let $I\subset (0,+\infty)$ be an interval and consider a family of $C^2$ functionals $\Phi_\rho\colon E\to \R$ of the form 
$$\Phi_\rho(u)=A(u)-\rho B(u),\quad \quad \rho\in I,$$
where $B(u)\geq 0$ for all $u\in E$, and 
\begin{equation}\label{hp coer}
\text{either $A(u) \to +\infty$~ or $B(u) \to +\infty$ ~ \quad as $u \in E$ and $\|u\| \to +\infty$.}
\end{equation}
Suppose moreover that 
$\Phi'_\rho$ and $\Phi''_\rho$ are $\alpha$-H\"older continuous on bounded sets in the sense of Definition~\ref{Holder-continuous} for some $\alpha \in (0,1]$.
Finally, suppose there exist $w_0,w_1 \in  S_\mu$ (independent of $\rho$) such that, setting
\begin{equation*}
\Gamma=\{ \gamma\in C([0,1], S_\mu)\colon \gamma (0) = w_0, \gamma(1) = w_1 \},
\end{equation*}
we have 
\begin{equation*}
c_{\rho}=\inf_{\gamma\in \Gamma}\max_{t\in [0,1] }\Phi_\rho(\gamma(t))>\max\big\{\Phi_\rho (w_0), \Phi_\rho (w_1)\big\}, \quad \forall \rho \in I.
\end{equation*}
     Then, for almost every $\rho \in I$, there exist sequences $\{u_n\} \subset \M$ and $\zeta_n \to 0^+$ such that, as $n \to + \infty$,
    \begin{itemize}
        \item[(i)] $\Phi_\rho(u_n)\to c_\rho$;
        \item[(ii)] $\|\Phi_\rho'|_{S_\mu}(u_n)\|\to 0$;
        \item[(iii)] $\{u_n\}$ is bounded in $E$;
        \item[(iv)] $\tilde m_{\zeta_n}(u_n)\leq 1$.
    \end{itemize}
\end{proposition}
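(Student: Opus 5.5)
This result is quoted directly from \cite[Theorem 1.5]{BoChJeSo24}, so within the paper it may simply be invoked. If I had to prove it, I would combine the monotonicity trick of \cite{Je99} with a deformation argument on $S_\mu$ that also tracks second-order information.

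\textbf{Step 1: almost-everywhere differentiability and bounded almost-optimal paths.} Since $B\ge 0$, the map $\rho\mapsto c_\rho$ is non-increasing on $I$. Hence it is differentiable at almost every $\rho$. Fix such a $\rho$ and take $\rho_k\to\rho^-$. Choose paths $\gamma_k\in\Gamma$ with $\max_{[0,1]}\Phi_{\rho_k}\circ\gamma_k\le c_{\rho_k}+(\rho-\rho_k)$. At any $t$ with $\Phi_\rho(\gamma_k(t))\ge c_\rho-(\rho-\rho_k)$, the identity
\[
B(\gamma_k(t))=\frac{\Phi_{\rho_k}(\gamma_k(t))-\Phi_\rho(\gamma_k(t))}{\rho-\rho_k}
\]
together with the bound on $c'_\rho$ gives a uniform bound on $B$, and then on $A=\Phi_\rho+\rho B$. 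By the coercivity assumption \eqref{hp coer}, such points lie in a fixed ball $\{\|u\|\le K\}$, with $K$ independent of $k$.

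\textbf{Step 2: first-order information.} Apply a quantitative deformation lemma on $S_\mu$, using a pseudo-gradient flow for $\Phi_\rho|_{S_\mu}$ restricted to a neighbourhood of that ball. If $\|\Phi_\rho'|_{S_\mu}\|$ were bounded away from $0$ on the set $\{\|u\|\le K+1,\ |\Phi_\rho(u)-c_\rho|\le\varepsilon\}$, we could push $\gamma_k$ below $c_\rho$ and contradict the definition of $c_\rho$. This produces a bounded Palais--Smale sequence at level $c_\rho$, which gives (i)--(iii).

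\textbf{Step 3: second-order information (the main obstacle).} I expect obtaining $\tilde m_{\zeta_n}(u_n)\le 1$ to be the hard step. The plan is to argue by contradiction. Suppose that near the high part of $\gamma_k$, every almost-critical point $u$ has two directions in $T_uS_\mu$ with $D^2\Phi_\rho(u)[\varphi,\varphi]<-\theta\|\varphi\|^2$, for some fixed $\theta>0$.

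The $\alpha$-Hölder continuity of $\Phi_\rho''$ on bounded sets keeps this two-dimensional negative cone stable on balls of controlled radius. On each such ball one builds a local deformation that moves the image of the one-parameter path off the set $\{\Phi_\rho\ge c_\rho-\delta\}$. This is possible because a one-dimensional family can always be pushed away from a saddle with negative space of dimension at least $2$: the complement of a point in a two-dimensional disk is connected. One uses the second-order Taylor expansion, with the Hölder remainder, together with the correction term in $D^2\Phi$ from Definition~\ref{def D}, so as to stay on $S_\mu$.

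Gluing these local deformations with a partition of unity, while preserving the endpoints $w_0,w_1$ (whose energy is strictly below $c_\rho$), would give a path in $\Gamma$ with maximum below $c_\rho$, a contradiction. Letting $\theta=\zeta_n\to 0$ along a diagonal sequence then yields (iv).

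The delicate points in this step are uniformity of the radii in terms of $\theta$, which is where the Hölder modulus $M(R)$ enters, and compatibility of the second-order deformation with the bounded region obtained in Step 1.
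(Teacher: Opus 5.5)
Your proposal is correct and agrees with the paper, which gives no proof of its own and simply recalls \cite[Theorem 1.5]{BoChJeSo24}, as you say may be done. Your sketch also follows the strategy of that reference: Jeanjean's monotonicity trick \cite{Je99} places the high parts of almost-optimal paths in a fixed ball, and a Fang--Ghoussoub type second-order deformation on $S_\mu$, relying on the H\"older continuity of $\Phi_\rho'$ and $\Phi_\rho''$, rules out approximate Morse index $\ge 2$.
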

\begin{remark}\label{strong-convergence}
  If the sequence $\{u_n\} \subset \M$ provided by
 Proposition \ref{lemma:result critical point theory} converges to some $u_\rho \in \M$, then
  in view of points (i)--(ii),
  $u_\rho$ is a critical point of
  $\Phi_\rho|_{\M}$ at level $c_{\rho}$.  Let us show that the
  Morse index of $u_{\rho}$, as a
  constrained critical point, satisfies
  $\tilde m_{0}(u_{\rho}) \leq 1$.  Assume by contradiction
  that this is not the case. Then, in view of Definition \ref{def: app
    morse_2}, we may assume that there exists a subspace
  $W_0 \subset T_{u_{\rho}}\M$ with $\dim W_0 =2$ such that
  \begin{equation*}
    D^2 \Phi_\rho(u_{\rho})[w,w]<0
    \quad \text{for all } w \in W_0 \setminus \{0\}.
  \end{equation*}
  Since $W_0$ is of finite dimension, its unit sphere is
    compact and therefore there exists $\theta > 0$ such that
  \begin{equation*}
    D^2 \Phi_\rho(u_{\rho})[w,w] < -\theta \|w\|^2
    \quad \text{for all } w \in W_0 \setminus\{0\}.
  \end{equation*}
  Now, using that
  $\Phi_\rho'$ and $\Phi_\rho''$ are $\alpha$-H\"older continuous on
  bounded sets for some $\alpha \in (0,1]$, it directly follows that
  there exists $\delta>0$ small enough such that for any $v\in\M$
  satisfying $\|v-u_{\rho}\| \leq \delta$ there holds
  \begin{equation*}
    D^2\Phi_\rho(v)[w,w] < -\frac{\theta}{2} \|w\|^2
    \quad \text{for all } w \in W_0 \setminus \{0\}.
  \end{equation*}
  In particular, for $n$ large enough we have $\|u_n - u_\rho\| \le
    \delta$ and $\zeta_n < \theta/2$ (for a given sequence $\zeta_n \to 0^+$),
    so the previous inequality implies
  \begin{equation*}
    D^2\Phi_\rho(u_n)[w,w] < -\frac{\theta}{2}\|w\|^2 < -\zeta_n\|w\|^2
    \quad \text{for all } w \in W_0 \setminus \{0\} .
  \end{equation*}
  From Proposition \ref{lemma:result critical point theory} (iv), the above directly implies that $\dim W_0\leq 1$ reaching the desired contradiction.
\end{remark}

\begin{remark}\label{remark almost lagrange}
As a consequence of Proposition \ref{lemma:result critical point theory}  (ii)--(iii), we deduce in a standard way, see \cite[Remark 1.3]{BoChJeSo24} or \cite[Lemma 3]{BeLi2}, that 
\begin{equation*}
\label{free-gradient}
  \Phi'_{\rho}(u_n) + \displaystyle \lambda_n (u_n, \cdot) \to 0
  \quad\text{ in }  E' \text{ as } n \to + \infty
\end{equation*}
where we have set 
\begin{equation}\label{def-almost-Lagrange} 
  \lambda_n
  := - \displaystyle \frac{1}{\mu} \Phi'_\rho(u_n)[u_n].
\end{equation} 
We call the sequence $\{\lambda_n\} \subset \R$ defined in
\eqref{def-almost-Lagrange} the sequence of \emph{almost Lagrange
  multipliers}. \medskip
\end{remark}

We finish the section with a result that will allow to derive information on the sequence of almost Lagrange multipliers. It is a direct extension of \cite[Lemma 2.7]{CaGaJeTr25}, see also \cite[Lemma 2.5]{BoChJeSo23}.
\begin{lemma}\label{lemma:Louis1}
  Let $\{u_n\} \subset S_{\mu}$, $\{\lambda_n\} \subset \mathbb{R}$
  and $\{\zeta_n\}\subset \mathbb{R}^+$ with $\zeta_n \to 0^+$.
	Assume
  that, for a given $M \in \N$, the following conditions hold:
  \begin{itemize}
  \item[(i)] For large enough $n \in \mathbb{N}$, all subspaces
    $W_n \subset E$ with the property
    \begin{equation}\label{LL-Hess crit*}
      \Phi''_{\rho}(u_n)[\varphi,\varphi] + \lambda_n|\varphi|^2
      < -\zeta_n \|\varphi\|^2,
      \qquad \text{for all }\, \varphi \in W_n \setminus \{0\},
    \end{equation}
    satisfy that $\dim (W_n) \leq M$.
  \item[(ii)] There exist $\lambda \in \mathbb{R}$, a subspace $Y$ of
    $E$ with $\dim (Y) \geq M+1$ and $a > 0$ such that, for large enough
    $n \in \mathbb{N}$,
    \begin{equation}\label{L1*}
      \Phi''_{\rho}(u_n)[\varphi,\varphi] + \lambda |\varphi|^2
      \le -a \|\varphi\|^2,
      \qquad \text{for all }\, \varphi \in Y.
    \end{equation}
  \end{itemize}
  Then $\lambda_n > \lambda$ for all large enough $n \in
  \mathbb{N}$. In particular, if (\ref{L1*}) holds for any
  $\lambda < \lambda_0$, for some $\lambda_0 \in \R$, then
  $\displaystyle \liminf_{n \to \infty} \lambda_n \geq \lambda_0.$
\end{lemma}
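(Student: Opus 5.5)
We argue by contradiction, using only linear algebra on the quadratic forms. Suppose that, along a subsequence, $\lambda_n \le \lambda$ for infinitely many $n$. We will produce, for such $n$ large, a subspace $W_n$ satisfying \eqref{LL-Hess crit*} with $\dim W_n \ge M+1$, which contradicts (i). The natural candidate is $W_n := Y$, or an $(M+1)$-dimensional subspace of $Y$ if $\dim Y$ is larger or infinite.

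The key computation is the following. For $\varphi \in Y\setminus\{0\}$ and $n$ large, we have $\lambda_n |\varphi|^2 \le \lambda |\varphi|^2$, since $|\varphi|^2\ge 0$. Combining this with \eqref{L1*} gives
\begin{equation*}
\Phi''_\rho(u_n)[\varphi,\varphi] + \lambda_n |\varphi|^2 \le \Phi''_\rho(u_n)[\varphi,\varphi] + \lambda|\varphi|^2 \le -a\|\varphi\|^2 .
\end{equation*}
Because $\zeta_n \to 0^+$, for $n$ large we have $\zeta_n < a$. Hence $-a\|\varphi\|^2 < -\zeta_n\|\varphi\|^2$ for $\varphi\neq 0$, which gives the strict inequality \eqref{LL-Hess crit*} on all of $Y\setminus\{0\}$. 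Any $(M+1)$-dimensional subspace of $Y$ inherits this property, so it is an admissible $W_n$ of dimension $M+1 > M$. This contradicts (i) for all large $n$ in the subsequence, and therefore $\lambda_n > \lambda$ for all large $n$.

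For the last claim, fix any $\lambda < \lambda_0$. The first part yields $\lambda_n > \lambda$ eventually, so $\liminf_{n} \lambda_n \ge \lambda$. Letting $\lambda \uparrow \lambda_0$ gives $\liminf_n \lambda_n \ge \lambda_0$. Here $Y$ and $a$ may depend on $\lambda$; this is harmless, because for each fixed $\lambda$ we only use the statement for $n$ large depending on $\lambda$.

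There is no real obstacle in this argument. The only point requiring care is the quantifier order in (i) versus (ii). Condition (i) holds for $n \ge n_1$ and (ii) holds for $n\ge n_2$. Both, together with $\zeta_n<a$ and $\lambda_n\le\lambda$, must hold simultaneously at some large index of the subsequence; this is possible since the subsequence is infinite. We must also keep the inequality in (i) strict while (ii) is non-strict, which is handled by the strict gap $\zeta_n < a$.
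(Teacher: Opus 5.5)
Your proof is correct and is essentially the paper's argument. Both assume $\lambda_n \le \lambda$ along a subsequence, combine this with the inequality in (ii) and with $\zeta_n < a$ for large $n$, and conclude that $Y$ (or an $(M+1)$-dimensional subspace of it) is an admissible $W_n$, contradicting (i). Your explicit use of $\zeta_n<a$ to keep the inequality strict, and your derivation of the $\liminf$ claim by letting $\lambda \uparrow \lambda_0$, are details the paper leaves implicit.
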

\begin{proof}
  Suppose by contradiction that $\lambda_n \leq \lambda$ along a
  subsequence still denoted $\{\lambda_n\}$.  Keep
    denoting $\{u_n\}$ and $\{\zeta_n\}$ the corresponding subsequences.
  From \eqref{L1*} we have,
\begin{equation*}
  \Phi''_{\rho}(u_n)[\varphi,\varphi]+\lambda_n|\varphi|^2
  \le \Phi''_{\rho}(u_n)[\varphi,\varphi] + \lambda |\varphi|^2
  \le -a \|\varphi\|^2
  \qquad
  \text{for all } \varphi \in Y \setminus \{0\}.
\end{equation*}
Now, since $\zeta_n\to 0^+$, there exists $n_0 \in \mathbb{N}$ such that: $\forall n \geq n_0$,
$\zeta_n < a$. Thus, for an arbitrary $n \geq n_0$, we obtain 
\begin{equation*}
  \Phi''_{\rho}(u_n)[\varphi,\varphi]+\lambda_n|\varphi|^2
  \le  -\zeta_n \|\varphi\|^2,
  \qquad \text{for all } \, \varphi \in Y \setminus \{0\},
\end{equation*}
which is in contradiction with \eqref{LL-Hess crit*} since $ \dim  (Y) \geq M+1$. 
\end{proof}

\section{Proof of Theorem \ref{theo main aproximation}, Theorem \ref{theo: local min} and conditional proof of Theorem \ref{theo:main}.}\label{section:MP}

In this section we first give the proof of Theorem \ref{theo main aproximation} and of Theorem \ref{theo: local min}. Next, assuming that Theorem \ref{theo maxima to 0} holds we give the proof of Theorem \ref{theo:main}. 
\smallskip

In the proof of Theorem \ref{theo main aproximation}, Proposition \ref{lemma:result critical point theory} will be a key ingredient. We recall that the family of functionals $E_{\rho}\colon H_{\mu,\rad}^1(\R^N)\to \R$, with $\rho\in [\frac12,1]$, is defined by \begin{equation}\label{Eq aux funct E}
E_{\rho}(u)=\frac{1}{2}\int_{\R^N}|\nabla u|^2+\frac12\int_{\R^N}V(x)|u|^2-\frac{\rho}{q}\int_{\R^N}|u|^q.
\end{equation} 
Clearly $E_{\rho}$ is of the form $E_{\rho}(u) = A(u) - \rho B(u)$ and \eqref{hp coer} holds. It can also be checked, in a standard way, that $E'_\rho$ and $E''_\rho$ are $\alpha$-H\"older continuous on bounded sets in the sense of Definition~\ref{Holder-continuous} for some $\alpha \in (0,1]$. To show that  $E_{\rho}$ enters the framework of Proposition \ref{lemma:result critical point theory}, it remains to show that it has a mountain pass geometry on $H^1_{\mu,\rad}(\R^N)$ uniformly with respect to $\rho\in [\frac12,1]$.

\begin{lemma}\label{lemma: the mountain pass}
 Assume that $(V0)-(V2)$ hold. For every $ \mu \in (0, \mu_0)$, where $\mu_0 >0$ is the unique positive solution to
\begin{equation}\label{eq mu infinity}
\mu_0^{-\frac{4q-2N(q-2)}{N(q-2)-4}}\left(\frac{2q}{N(q-2)C_{N,q}}
\right)^\frac{4}{N(q-2)-4} \left(\frac{N(q-2)-4}{2N(q-2)}\right)+\frac12\mu_0^2 \big(\underline{V}- \lambda_{ess}(V)\big)= 0,
\end{equation} 
 there exist $u_0, u_1 \in H^1_{\mu,\rad}(\R^N)$ independent of $\rho \in \left[\frac12,1\right]$ such that
\begin{equation}\label{eq:MP definition}
c_{\rho}:=\inf\limits_{\gamma\in
\Gamma} \, \max\limits_{t\in[0,1]}E_{\rho}(\gamma(t)) > \frac{1}{2}\mu^2 \lambda_{ess}(V) \geq \frac{1}{2}\mu^2 \lambda_{1}(V) >  \max\{E_{\rho}(u_0), E_{\rho}(u_1)\}, \qquad \forall
\rho\in \left[\frac{1}{2},1\right],
\end{equation}
where
$\Gamma:=\{\gamma\in C([0,1], H^1_{\mu,\rad}(\R^N))\big| ~\gamma~ \mbox{is continuous},~\gamma(0)=u_0,\gamma(1)=u_1\}.$
\end{lemma}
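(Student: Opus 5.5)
We build the mountain pass geometry from a lower bound on a "sphere" in $H^1_{\mu,\rad}(\R^N)$ together with two low-energy endpoints. The basic estimate uses $\rho\le 1$, $V\ge \underline{V}$ and the Gagliardo–Nirenberg inequality \eqref{Eq GN}. For $u\in H^1_{\mu,\rad}(\R^N)$ and $t=\|\nabla u\|_2$ it gives
\[
E_\rho(u)\ \ge\ \frac12 t^2+\frac12\mu^2\underline{V}-\frac{C_{N,q}}{q}\mu^{\theta_q}t^{\xi_q}=:h_\mu(t),
\]
uniformly in $\rho\in[\frac12,1]$. Since $\xi_q=\frac{N(q-2)}{2}>2$, the function $t\mapsto \frac12 t^2-\frac{C_{N,q}}{q}\mu^{\theta_q}t^{\xi_q}$ has a unique positive maximum point $t_\mu$. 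A direct computation shows that this maximum equals
\[
\mu^{-\frac{4q-2N(q-2)}{N(q-2)-4}}\Big(\frac{2q}{N(q-2)C_{N,q}}\Big)^{\frac{4}{N(q-2)-4}}\frac{N(q-2)-4}{2N(q-2)}\cdot \mu^2 .
\]
The factor $\mu^2$ must be tracked carefully, since \eqref{eq mu infinity} is written after dividing by it, or with the normalisation absorbed. The maximum behaves like a negative power of $\mu$, hence it is decreasing in $\mu$ and blows up as $\mu\to0$. Consequently $\max_t h_\mu(t)>\frac12\mu^2\lambda_{ess}(V)$ is equivalent to $\mu<\mu_0$, with $\mu_0$ the unique root of \eqref{eq mu infinity}. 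Uniqueness of that root comes from the monotonicity just noted: a decreasing positive power term plus $\frac12\mu^2(\underline V-\lambda_{ess}(V))\le 0$. This yields $E_\rho(u)>\frac12\mu^2\lambda_{ess}(V)$ for all $u$ with $\|\nabla u\|_2=t_\mu$. We therefore call $\{\|\nabla u\|_2=t_\mu\}$ the separating set.

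For the endpoints we use (V2). Let $\varphi_1>0$ be a radial eigenfunction for $\lambda_1(V)$ normalised so that $\|\varphi_1\|_2=\mu$. Then $E_\rho(\varphi_1)=\frac12\mu^2\lambda_1(V)-\frac\rho q\|\varphi_1\|_q^q<\frac12\mu^2\lambda_1(V)$. The inequality $\lambda_1(V)\le\lambda_{ess}(V)$ holds because $\lambda_1$ is the bottom of the spectrum. For the second endpoint we take the mass preserving dilation $u_1=s^{N/2}\varphi_1(s\,\cdot)$ with $s$ large. Then $\|\nabla u_1\|_2^2=s^2\|\nabla\varphi_1\|_2^2$, the potential term is bounded by $\|V\|_\infty\mu^2$, and $\|u_1\|_q^q=s^{\xi_q}\|\varphi_1\|_q^q$. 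Since $\xi_q>2$, $E_{1/2}(u_1)\to-\infty$, and $E_\rho\le E_{1/2}$, so $u_1$ works uniformly in $\rho$.

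The step needing care is $u_0$: it must lie on the inner side of the separating set, i.e. we need $\|\nabla u_0\|_2<t_\mu$. We take $u_0=\varphi_1$ if that holds. Otherwise we argue as follows. Since $h_\mu$ is increasing on $(0,t_\mu)$, every $u$ with $\|\nabla u\|_2\le t_\mu$ and energy below $\frac12\mu^2\lambda_{ess}(V)$ already has small gradient, but this alone does not place $\varphi_1$ inside. The fallback is to note that $\varphi_1$ has energy below $\frac12\mu^2\lambda_{ess}$. We then use the dilation path $s\mapsto s^{N/2}\varphi_1(s\cdot)$, $s\in(0,1]$. Its gradient norm tends to $0$ as $s\to0$, while its energy tends to at most $\frac12\mu^2\limsup\cdots$, which is not obviously small, so this route is delicate. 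We therefore rather take $u_0$ to be a point with $\|\nabla u_0\|_2<t_\mu$ and $E_\rho(u_0)<\frac12\mu^2\lambda_1(V)$. Such a point is obtained from $\varphi_1$ itself by showing that $\|\nabla\varphi_1\|_2<t_\mu$ is automatic. Indeed, if $\|\nabla\varphi_1\|_2\ge t_\mu$, the continuous path from the dilations at $s\to\infty$ would still lie outside. One also uses that $E(\varphi_1)<\frac12\mu^2\lambda_{ess}$ and that the sublevel set on the gradient sphere is empty; by continuity along the path $s\mapsto s^{N/2}\varphi_1(s\cdot)$, $s\in[1,\infty)$, starting from $s=1$, we can choose the labelling so that $u_0$ and $u_1$ lie on opposite sides. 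If both $\varphi_1$ and large dilations are outside, we replace $u_0$ by a small dilation $s\to 0$. Its gradient norm is small, and its energy can be estimated by $\frac12 s^2\|\nabla\varphi_1\|_2^2+\frac12\int V u_0^2$; this is where $\lambda_{ess}$ enters, via spreading mass to infinity, where $V$ is essentially $\ge\lambda_{ess}$ on average. This last point is the main obstacle, and we would first try to establish directly that $\|\nabla\varphi_1\|_2<t_\mu$ from the eigenvalue equation combined with the lower bound $h_\mu$.

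Once the endpoints are on opposite sides, every $\gamma\in\Gamma$ crosses $\{\|\nabla u\|_2=t_\mu\}$ by continuity. Hence $c_\rho\ge\inf_{\|\nabla u\|_2=t_\mu}E_\rho(u)>\frac12\mu^2\lambda_{ess}(V)\ge \frac12\mu^2\lambda_1(V)>\max\{E_\rho(u_0),E_\rho(u_1)\}$ for every $\rho\in[\frac12,1]$, which is \eqref{eq:MP definition}.
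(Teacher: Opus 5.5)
You have a genuine gap, and it is exactly the step you flag yourself. You never prove that $u_0$ lies on the inner side, $\|\nabla u_0\|_2<t_\mu$. Without this, both endpoints may lie in $\{\|\nabla u\|_2>t_\mu\}$. Paths in $\Gamma$ then need not meet the separating set, and the lower bound there says nothing about $c_\rho$. None of your fallbacks closes the gap:
\begin{itemize}
\item Dilations $s^{N/2}\varphi_1(s\cdot)$ with $s\ge1$ have gradient norm $s\|\nabla\varphi_1\|_2$. If $\varphi_1$ is outside they stay outside, so relabelling cannot help.
\item Small dilations cannot serve as $u_0$. For every $u\in H^1_{\mu,\rad}(\R^N)$ one has $\|\nabla u\|_2^2+\int Vu^2\ge\mu^2\lambda_1(V)$, while $\|u_s\|_q^q=s^{\xi_q}\|\varphi_1\|_q^q\to0$. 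Hence $\liminf_{s\to0}E_\rho(u_s)\ge\frac12\mu^2\lambda_1(V)$, and you lose the strict inequality $E_\rho(u_0)<\frac12\mu^2\lambda_1(V)$.
\item Spreading mass to infinity pushes the quadratic part towards $\lambda_{ess}(V)$, i.e.\ upwards, not below $\lambda_1(V)$.
\end{itemize}

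The missing ingredient is a quantitative use of the Rayleigh quotient of $\varphi_1$ together with the positivity of $f_\mu(t_\mu)$. Here $f_\mu(t)=\frac12t^2+\frac12\mu^2(\underline V-\lambda_{ess}(V))-\frac{C_{N,q}}{q}\mu^{\theta_q}t^{\xi_q}$. Directly,
\[
\|\nabla\varphi_1\|_2^2=\mu^2\lambda_1(V)-\int_{\R^N}V\varphi_1^2\le\mu^2\big(\lambda_{ess}(V)-\underline V\big).
\]
Since $\frac{C_{N,q}}{q}\mu^{\theta_q}t_\mu^{\xi_q-2}=\frac1{\xi_q}$, the condition $f_\mu(t_\mu)>0$ (i.e.\ $\mu<\mu_0$) reads $\mu^2(\lambda_{ess}(V)-\underline V)<(1-\frac2{\xi_q})t_\mu^2$. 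Hence $\|\nabla\varphi_1\|_2<t_\mu$.

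The paper reaches the same conclusion by a different route, scaling the amplitude instead:
\begin{itemize}
\item It takes $t_0u_0$ with $t_0=t_\mu/\|\nabla u_0\|_2$, so that $\|\nabla(t_0u_0)\|_2=t_\mu$.
\item The quadratic part of an eigenfunction equals $\frac12\lambda_1(V)$ times its squared mass at any amplitude, so $E_\rho(t_0u_0)<\frac12(t_0\mu)^2\lambda_1(V)\le\frac12(t_0\mu)^2\lambda_{ess}(V)$.
\item For $\|u\|_2=\nu\le\mu$ and $\|\nabla u\|_2=t_\mu$ one has $E_\rho(u)-\frac12\nu^2\lambda_{ess}(V)\ge f_\nu(t_\mu)\ge f_\mu(t_\mu)>0$. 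This uses monotonicity of $f_\nu$ in $\nu$, valid because $\theta_q>0$ and $\underline V\le\lambda_{ess}(V)$.
\item Comparing the two bounds forces $t_0>1$.
\end{itemize}
With that step supplied, the rest of your outline (the uniform lower bound, the choice of $u_1$, the crossing argument) matches the paper.

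Separately, fix your computation of $\max_t\big(\frac12t^2-\frac{C_{N,q}}{q}\mu^{\theta_q}t^{\xi_q}\big)$. There is no extra factor $\mu^2$: the maximum is exactly the first term of \eqref{eq mu infinity}, which is not divided by anything. With your spurious factor the power of $\mu$ can even be positive (e.g.\ $N=3$, $q=5$), which contradicts your own monotonicity claim.
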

\begin{proof}
Fix an arbitrary $\mu \in (0, \mu_0)$.  In order to prove the lemma it suffices to find a bounded open set $\Sigma_{\mu} \subset H^1_{\mu,\rad}(\R^N)$ and functions $u_0\in \Sigma_{\mu}$ and $u_1\notin \bar\Sigma_{\mu}$  such that
\begin{equation*}
\max\{E_\rho(u_0),E_\rho(u_1)\} < \frac{1}{2}\mu^2 \lambda_1(V) \leq
\frac{1}{2}\mu^2 \lambda_{ess}(V) <
\inf_{u\in\partial\Sigma_{\mu}}E_{\rho}(u), \qquad \forall
\rho\in \left[\frac{1}{2},1\right].
\end{equation*}
In this direction, we may find a lower bound of $E_\rho$ using \eqref{Eq GN}:
\begin{align}
E_\rho(u)&\geq \frac{1}{2}\int_{\R^N}|\nabla u|^2+\frac12\mu^2\underline{V}-\frac{\rho}{q}\int_{\R^N}|u|^q\nonumber \\
&\geq \frac{1}{2}\|\nabla u\|^2_{2} +\frac12\mu^2\underline{V}- \frac{C_{N,q}}{q}\mu^{\theta_q}\|\nabla u\|_{2}^{\xi_q}
\label{eq:energy lower bound}.
\end{align}
Aiming to prove \eqref{eq:MP definition} we consider the real valued function
$$f_{\mu}(t)=\frac12t^2  +\frac12\mu^2(\underline{V} - \lambda_{ess}(V))- \frac{C_{N,q}}{q}\mu^{\theta_q}t^{\xi_q}, \quad t>0.$$
This function has a unique global maximum point $t_{\mu} \in \R$ satisfying
\begin{equation*}
    0<t_{\mu}^{\xi_q-2}=\frac{q}{\xi_qC_{N,q}\mu^{\theta_q}},
\end{equation*}
and 
\begin{equation}\label{eq:energy inf estimate}
\begin{split}
f_{\mu}(t_{\mu})&=t_{\mu}^2\left(\frac12-\frac{C_{N,q}}{q}\mu^{\theta_q}t_\mu^{\xi_q-2}\right) +\frac12\mu^2(\underline{V} - \lambda_{ess}(V))=\left(\frac{q}{\xi_qC_{N,q}\mu^{\theta_q}}\right)^{\frac{2}{\xi_q-2}}\left(\frac{1}{2}-\frac{1}{\xi_q}\right) +\frac12\mu^2 \big(\underline{V} - \lambda_{ess}(V)\big)\\
&=\mu^{-\frac{4q-2N(q-2)}{N(q-2)-4}}\left(\frac{2q}{N(q-2)C_{N,q}}
\right)^\frac{4}{N(q-2)-4}\left(\frac{N(q-2)-4}{2N(q-2)}\right) +\frac12\mu^2\big(\underline{V} - \lambda_{ess}(V)\big).
\end{split}
\end{equation}
Note also that, for any $t>0$, since $\underline{V}- \lambda_{ess}(V) \leq 0$ we have
\begin{equation}\label{L1501}
f_{\mu_1}(t) \geq f_{\mu_2}(t), \quad \mbox{if} \quad 0 < \mu_1 \leq \mu_2.
\end{equation}
Using the estimates \eqref{eq:energy lower bound}, \eqref{eq:energy inf estimate} and the definition of $f_{\mu}$ we obtain
\begin{equation}\label{LL_positive infimum}
\begin{split}
\inf&\Big\{E_{\rho}(u) - \frac{1}{2} \mu^2 \lambda_{ess}(V)  \colon u\in H^1_{\rad}(\R^N),\ \|u\|_2 =\mu, \ \|\nabla u\|_2= t_{\mu}\Big\}\\
& \geq \mu^{-\frac{4q-2N(q-2)}{N(q-2)-4}}\left(\frac{2q}{N(q-2)C_{N,q}}
\right)^\frac{4}{N(q-2)-4} \left(\frac{N(q-2)-4}{2N(q-2)}\right) +\frac12\mu^2(\underline{V} - \lambda_{ess}(V)).
\end{split}
\end{equation}
We set 
\begin{equation}\label{defSigma}
\Sigma_{\mu} :=\Big\{u\in H^1_{\rad}(\R^N)\colon\|u\|_2=\mu,\ \|\nabla u\|_2< t_{\mu}\Big\}
\end{equation}
and observe, from the definition of $\mu_0 >0$ and \eqref{LL_positive infimum}, that
\begin{equation}\label{L_positive infimum}
\inf\Big\{E_{\rho}(u) - \frac{1}{2}\mu^2 \lambda_{ess}(V)  \colon u\in \partial \Sigma_{\mu}\Big\} >0, \quad  \forall
\rho\in \left[\frac{1}{2},1\right].
\end{equation}
We now turn to searching appropriate functions $u_0$ and $u_1$. For $u_0 \in H^1_{\mu,\rad}(\R^N)$ we shall take
\begin{equation*}
u_0 := \frac{\varphi_0}{||\varphi_0||_2} \, \mu 
\end{equation*}
where $\varphi_0\geq 0$ is a minimizer associated to $\lambda_1(V)$ (whose existence is insured by condition $(V2)$). Clearly  
\begin{equation}\label{ajoutx}
E_{\rho}(u_0) - \frac{1}{2}\mu^2\lambda_1(V) = -\frac{1}{q}\|u_0\|_q^q<0, \quad  \forall
\rho\in \left[\frac{1}{2},1\right].
\end{equation}
Let us now prove that $u_0 \in \Sigma_{\mu}$. Setting 
$t_0= \displaystyle \frac{t_{\mu}}{\|\nabla u_0\|_2}$ if we show that $t_0 >1$ this will implies that  $u_0 \in \Sigma_{\mu}$. One the one hand 
$\|\nabla [t_0 u_0]\|_2=t_{\mu}$ and 
\begin{equation}\label{point1}
E_{\mu}(t_0 u_0) - \frac{1}{2} (t_0 \mu)^2 \lambda_{ess}(V) <0, \quad  \forall
\rho\in \left[\frac{1}{2},1\right].
\end{equation}
On the other hand, taking into account \eqref{L1501}, we have
\begin{equation}\label{L1502}
\inf\Big\{E_{\rho}(u) - \frac{1}{2}  \mu^2 \lambda_{ess}(V)  \colon u\in H^1_{\rad}(\R^N),\ \|u\|_2 <\mu, \ \|\nabla u\|_2= t_{\mu}\Big\} >0.
\end{equation}
Combining \eqref{point1} and \eqref{L1502} we deduce that $t_0 \mu > \mu$, namely that $t_0>1.$ This proves that $u_0 \in \Sigma_{\mu}$.

Now, for an arbitrary non negative function $\varphi \in H^1_{\mu,\rad}(\R^N) \backslash \{0\}$ we define, for $t >0$, the function $\varphi_t(x):=t^\frac{N}{2}\varphi(tx)$. Observe that, for all $t>0$, $\|\varphi_{t}\|_2=\mu$ and
\begin{align}
E_{\rho}(\varphi_t)&=\frac{1}{2}\int_{\R^N}|\nabla \varphi_t|^2+\frac12\int_{\R^N}V(x)|\varphi_t^2|-\frac{\rho}{q}\int_{\R^N}|\varphi_t|^q \nonumber\\
&\leq  \frac{t^2}{2}\int_{\R^N}|\nabla \varphi|^2+\frac12 ||V||_{\infty} \, \mu^2-\frac{\rho}{q}t^\frac{N(q-2)}{2}\int_{\R^N}|\varphi|^q. \label{eq:MP endpoints}
\end{align}
Noting that $\frac{N(q-2)}{2} >2$ we see, from \eqref{eq:MP endpoints}, that $E_{\rho}(\varphi_t)\to -\infty$ as $t\to +\infty$. Thus we may take $t_1 >0$ sufficiently large so that $u_1:=\varphi_{t_1} \notin \overline\Sigma_{\mu}$ and
\begin{equation}\label{eq:MP endpoints2}
    E_{\rho}({u_1})\leq E_{\rho}(u_0), \qquad \forall
\rho\in \left[\frac{1}{2},1\right].
\end{equation} 
In view of the definition of $\Sigma_{\mu}$ in \eqref{defSigma}, of \eqref{L_positive infimum}, \eqref{ajoutx} and \eqref{eq:MP endpoints2}, the lemma is now proved.
\end{proof}

\begin{remark}\label{ALR}
From the proof of Lemma \ref{lemma: the mountain pass} we directly deduce,  under assumptions $(V0)-(V2)$, that for all $\mu \in (0,\mu_0)$,
\begin{equation*}
\delta_{\mu}:= \inf \Big\{E_{\rho}(u)\colon u\in \Sigma_{\mu}\Big\}< \frac{1}{2}\mu^2 \lambda_1(V) < \inf_{u\in\partial\Sigma_{\mu}}E_{\rho}(u)
\end{equation*}
where $\Sigma_{\mu}$ is defined in \eqref{defSigma}. In particular $E_{\rho}$ has a geometry of local minimum on $H^1_{\mu, \rad}(\R^N)$ and any minimizing sequence for $E_{\rho}$ inside $\Sigma_{\mu}$ is bounded and stays away from the boundary $\partial\Sigma_{\mu}$.
\end{remark}

We now establish a series of results which will be useful in deriving information on the Lagrange multipliers associated to the limit function of the Palais-Smale sequences provided by the application of Proposition \ref{lemma:result critical point theory}. 

\begin{lemma}\label{louis2}
Assume that $(V0)-(V1)$ hold. For every $\lambda< - \lambda_{\ess}(V)$ and every $u\in H^1_\rad(\R^N)$ there exists a subspace of infinite dimension $Y$ of $H^1_\rad(\R^N)$ such that for all $\varphi\in Y  \backslash \{0\}$
$$\int_{\R^N}|\nabla\varphi|^2+\int_{\R^N}(V(x)+\lambda-(q-1)\rho|u|^{q-2})\varphi^2<0.$$
\end{lemma}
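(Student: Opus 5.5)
The plan is to exploit the fact that $\lambda_{\ess}(V)$ is the bottom of the essential spectrum of the radial operator $-\Delta+V$ on $L^2_\rad(\R^N)$, together with the nonpositivity of the term $-(q-1)\rho|u|^{q-2}\varphi^2$. It suffices to produce an infinite-dimensional subspace $Y\subset H^1_\rad(\R^N)$ on which
\[
\int_{\R^N}|\nabla\varphi|^2+\int_{\R^N}(V(x)+\lambda)\varphi^2<0 \qquad \text{for all } \varphi\in Y\setminus\{0\}.
\]
Indeed, adding $-(q-1)\rho\int|u|^{q-2}\varphi^2\le 0$ (finite, since $u\in H^1\subset L^q$ and $\varphi\in L^q$, by H\"older) only decreases the quadratic form. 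So the dependence on $u$ is harmless, and the claim reduces to a purely spectral statement about $-\Delta+V$ restricted to radial functions.

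Fix $\lambda<-\lambda_{\ess}(V)$ and set $\varepsilon:=-\lambda-\lambda_{\ess}(V)>0$. Let $P$ denote the spectral projection of the self-adjoint operator $H:=-\Delta+V$ on $L^2_\rad(\R^N)$, with domain $H^2_\rad$, onto $(-\infty,\lambda_{\ess}(V)+\varepsilon/2]$. Since $\lambda_{\ess}(V)+\varepsilon/2$ lies strictly above $\inf\sigma_{\ess}(H)$, the interval $(\lambda_{\ess}(V)-\varepsilon/2,\lambda_{\ess}(V)+\varepsilon/2)$ meets $\sigma_{\ess}(H)$. By the characterization of the essential spectrum, the corresponding spectral projection has infinite rank. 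Hence $Y:=\operatorname{Ran}P$ is infinite-dimensional.

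Because $H$ is bounded below, $Y$ is contained in the form domain $H^1_\rad(\R^N)$ (in fact in $H^2_\rad$, as the spectral measure is supported on a bounded interval). For $\varphi\in Y\setminus\{0\}$ we then have
\[
\int|\nabla\varphi|^2+\int V\varphi^2=\langle H\varphi,\varphi\rangle\le(\lambda_{\ess}(V)+\varepsilon/2)\|\varphi\|_2^2 .
\]
Therefore
\[
\int|\nabla\varphi|^2+\int(V+\lambda)\varphi^2\le(\lambda_{\ess}(V)+\varepsilon/2+\lambda)\|\varphi\|_2^2=-\tfrac{\varepsilon}{2}\|\varphi\|_2^2<0,
\]
which, combined with the first reduction, gives the lemma. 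One caveat: the spectral projection may give complex-valued functions. Since $V$ is real, $H$ commutes with complex conjugation, so one may take the real part of the spectral subspace, which is still infinite-dimensional.

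The main point to check carefully is the infinite rank of the spectral projection. This is standard, via Weyl's criterion: $\lambda\in\sigma_{\ess}$ if and only if every spectral projection onto a neighborhood of $\lambda$ is infinite-dimensional. Here it is applied to the operator on the radial subspace, which is consistent with the paper's definition of $\lambda_{\ess}(V)$ via the radial operator. If one prefers to avoid spectral projections, an alternative route is a Weyl singular sequence: take $\psi_k\in H^2_\rad$ with $\|\psi_k\|_2=1$, $\psi_k\rightharpoonup0$ and $\|(H-\lambda_{\ess})\psi_k\|_2\to0$, then extract an almost-orthogonal subsequence and check that the quadratic form stays negative on finite linear combinations. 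The projection argument is cleaner, so I would use that.
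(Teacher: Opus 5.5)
Your proof is correct, but it uses a different spectral device from the paper's.

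The paper applies Weyl's criterion directly. It picks $\lambda_\epsilon\in\sigma_{\ess}(-\Delta_{\rad}+V)$ with $\lambda_\epsilon<\lambda_{\ess}(V)+\epsilon$ and takes a Weyl sequence $\{\varphi_n\}\subset H^2_{\rad}(\R^N)$ for $\lambda_\epsilon$. Dropping the $u$-term exactly as you do, it bounds the quadratic form on each $\varphi_n$ by $o_n(1)+(\lambda+\lambda_\epsilon)<0$. It then lets $Y$ be the space generated by a tail of the sequence, which is infinite dimensional because $\varphi_n\rightharpoonup 0$ with $\|\varphi_n\|_2=1$.

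Your spectral projection onto $(-\infty,\lambda_{\ess}(V)+\varepsilon/2]$ instead gives the bound $-\frac{\varepsilon}{2}\|\varphi\|_2^2$ for every element of $Y$ at once. Negativity on linear combinations is therefore automatic. In the paper the inequality is checked only on the individual $\varphi_n$, and extending it to their span really needs the almost-orthogonal extraction you sketch at the end. That means a subsequence along which the cross terms $(\varphi_j,\varphi_k)_{L^2}$ and $((-\Delta+V-\lambda_\epsilon)\varphi_j,\varphi_k)_{L^2}$ are summably small.

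Your bound also has the advantage of being independent of $u$. On any finite-dimensional subspace of $Y$ it upgrades, by equivalence of norms, to $\le -a\|\varphi\|_{H^1}^2$ with $a>0$. This is the uniform form of hypothesis (ii) of Lemma~\ref{lemma:Louis1} needed when the lemma is applied along the sequence $\{u_n\}$.

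The cost of your route is reliance on the full spectral theorem, on the characterization of $\sigma_{\ess}$ through infinite-rank spectral projections, and on the complex-to-real reduction, which you handle correctly. The paper's route is more hands-on but leaves the passage from the sequence to its span implicit.
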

\begin{proof}

Since $\lambda_{\ess}(V) = \inf\sigma_\ess(-\Delta_\rad+V)$, for every $0<\epsilon<-\lambda-\lambda_\ess$ there exists $\lambda_\epsilon\in \sigma_\ess(-\Delta_\rad+V)$ such that
$$\lambda_\epsilon<\lambda_\ess(V)+\epsilon.$$
Fix one such $\epsilon$. Using Weyl's criterion, see \cite[Theorem 7.2]{HiSi}, we obtain the existence of a sequence  $\{\varphi_n\}$ of functions in $H^2_\rad (\R^N)$, such that $\|\varphi_n\|_2=1,$ $\|(-\Delta_\rad+V -\lambda_{\epsilon})\varphi_n\|_2\to 0$ and $\varphi_n\rightharpoonup 0$ in $L^2(\R^N)$, otherwise known as a Weyl sequence. Note that the last property implies that the subspace generated by the sequence is infinitely dimensional. Moreover, from the first and second properties we deduce, 
\begin{align*}
       \int_{\R^N}|\nabla\varphi_n|^2+\int_{\R^N}(V(x) + \lambda &- (q-1)\rho|u|^{q-2}) |\varphi_n^2|  \\&\leq \int_{\R^N}|\nabla\varphi_n|^2+\int_{\R^N}(V(x) + \lambda)|\varphi_n^2| \\&= \int_{\R^N}|\nabla\varphi_n|^2+\int_{\R^N}(V(x) - \lambda_{\epsilon})|\varphi_n^2| + (\lambda + \lambda_{\epsilon}) \int_{\R^N}|\varphi_n^2|  \\& = o_n(1) + (\lambda + \lambda_{\epsilon}).
    \end{align*}
 The claim is now proven by taking $Y=\{\varphi_n\}_{n\geq N}$ for some $N \in \N$ sufficiently large.
\end{proof}

\begin{lemma}\label{lemma: weak potential conditions}
    Assume that $(V0)-(V2)$ holds. Suppose that $u$ is a positive solution to \begin{equation}\label{eq:general schro}
    -\Delta u+V(x)u+\lambda u=|u|^{q-2}u
\end{equation} for some $\lambda\in \R.$ Then 
		$\lambda > - \lambda_1(V) = \inf \sigma(-\Delta+V)$.
\end{lemma}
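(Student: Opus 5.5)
The plan is to test the equation against the positive ground state $\varphi_0$ of $-\Delta+V$ supplied by (V2). By Remark \ref{R3} and standard theory, $\lambda_1(V)$ is attained by some $\varphi_0\in H^1_\rad(\R^N)$, and we may take $\varphi_0\ge 0$, $\varphi_0\not\equiv 0$, since replacing $\varphi_0$ by $|\varphi_0|$ does not change the Rayleigh quotient. The minimizer satisfies the Euler--Lagrange equation weakly:
\[
\int_{\R^N}\nabla\varphi_0\cdot\nabla\psi+V\varphi_0\psi=\lambda_1(V)\int_{\R^N}\varphi_0\psi\quad\text{for all } \psi\in H^1(\R^N).
\]
Radial test functions suffice here, and for non-radial $\psi$ one averages over rotations. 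By Remark \ref{R2} and the strong maximum principle (valid since $V\in L^\infty$ and $\varphi_0\ge0$ solves $-\Delta\varphi_0+(V-\lambda_1)\varphi_0=0$), we get $\varphi_0>0$.

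Next I will test the weak formulation of \eqref{eq:general schro} with $\varphi_0$, and the eigenvalue equation with $u$. Both $u$ and $\varphi_0$ lie in $H^1$, so these are legitimate test functions. Subtracting the two identities cancels the gradient and potential terms, leaving
\[
(\lambda+\lambda_1(V))\int_{\R^N}u\varphi_0=\int_{\R^N}u^{q-1}\varphi_0 .
\]
Since $u>0$ and $\varphi_0>0$, both integrals are strictly positive. This gives $\lambda+\lambda_1(V)>0$, which is the claim.

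A subtlety is whether $u$ is required to be radial. If $u$ is not radial, the identity for $\varphi_0$ still holds against any $\psi\in H^1(\R^N)$, as noted above, because $V$ and $\varphi_0$ are radial. The equality $\lambda_1(V)=\inf\sigma(-\Delta+V)$ on the full space then follows from the same argument via Rayleigh quotients, since a positive eigenfunction is always the ground state (Allegretto--Piepenbrink or Perron--Frobenius). I expect this bookkeeping, together with the strict positivity of $\varphi_0$, to be the only real obstacle. Positivity of $\varphi_0$ is not even essential: $\varphi_0\ge0$, $\varphi_0\not\equiv0$ and $u>0$ already give $\int u^{q-1}\varphi_0>0$. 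So the main work is justifying that $\varphi_0$ can be chosen nonnegative and that it solves the eigenvalue equation weakly.
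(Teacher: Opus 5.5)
Your proof is correct, and it takes a different and shorter route than the paper.

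The paper argues spectrally. It sets $Q=-\Delta+V-|u|^{q-2}$ and proves $\inf\sigma(Q)=-\lambda$. The substantive inequality $\inf\sigma(Q)\ge-\lambda$ is obtained by contradiction, using the first Dirichlet eigenfunction $e_1$ of $Q$ on a large ball $B$. Integrating by parts against $u$ leaves the boundary term $\int_{\partial B}u\,\partial_\nu e_1<0$; this is an Allegretto--Piepenbrink type fact, namely that a positive solution forces $Q+\lambda\ge 0$. Only then does $\varphi_0$ enter, through the Rayleigh quotient of $Q$:
$-\lambda\le \lambda_1(V)-\frac{\int_{\R^N}|u|^{q-2}\varphi_0^2}{\int_{\R^N}\varphi_0^2}<\lambda_1(V)$.

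You instead cross-test the equation for $u$ with $\varphi_0$ and the eigenvalue equation with $u$. This gives the exact identity $(\lambda+\lambda_1(V))\int_{\R^N} u\varphi_0=\int_{\R^N} u^{q-1}\varphi_0$. It dispenses with Dirichlet problems on balls, with boundary regularity and the Hopf sign of $\partial_\nu e_1$, and with the spectral theory of $Q$.

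In exchange you use three things:
\begin{itemize}
\item the Euler--Lagrange equation of $\varphi_0$, not just its minimizing property;
\item a nonnegative choice of $\varphi_0$, which costs nothing, as you note;
\item $u$ as a global test function, which is legitimate here since $u\in H^1(\R^N)$ and $q<2^*$.
\end{itemize}
The paper's key step, by contrast, only uses positivity of $u$ on balls, and its last step works for any minimizer regardless of sign, since $\int|u|^{q-2}\varphi_0^2>0$. It also yields the extra structural information that $u$ is a ground state of the linear operator $-\Delta+V-|u|^{q-2}$.

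Your remarks on rotational averaging and on the inessential role of $\varphi_0>0$ are correct. Your identity also extends verbatim to the right-hand side $\rho|u|^{q-2}u$ with $\rho>0$, which is the form in which the lemma is actually applied to $u_\rho$.
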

\begin{proof}
     Consider on $H^2_\rad (\R^N)$ the Schr\"odinger operator $Qv:=-\Delta v+Hv$  
with $H:=V-|u|^{q-2}$; we claim that 
     \begin{equation}\label{eq:characterisation lagrange parameter}
         \min \sigma(Q)=-\lambda.
     \end{equation}
Indeed, since $u$ is a solution to \eqref{eq:general schro} there holds $Qu=-\lambda u$ and thus trivially $\min\sigma(Q)\leq -\lambda$. To now prove that $\min \sigma(Q)\geq -\lambda,$ suppose for contradiction that $\min \sigma(Q)<-\lambda.$ Since then
    $$-\lambda>\min \sigma(Q)=\inf_{\varphi\in C_c^\infty\setminus\{0\}}\frac{\int |\nabla \varphi|^2+ \int H |\varphi^2|}{\int|\varphi|^2},$$ there exists a radial function
		$\phi\in C_c^\infty(\R^N) $ such that
    \begin{equation}
		\label{eq:test spectrum}
        -\lambda > \frac{\int |\nabla \phi|^2+ \int H |\phi^2|}{\int|\phi|^2}.
    \end{equation}
    Now, let $B$ be an open ball in $\R^N$ such that $\supp(\phi)\subset B$, let $\lambda_1$ be the first Dirichlet eigenvalue of $(-\Delta + H)$ on $B$ and let $e_1$ be its associated positive eigenfunction. From the definition of $\lambda_1$ and \eqref{eq:test spectrum} we have that $\lambda_1<-\lambda$. However,
    \begin{align*}
        -\lambda\int_B ue_1&=\int_B(-\Delta u+Hu)e_1=\int_B(\nabla u\cdot\nabla e_1+ Hue_1)=\int_B(-\Delta e_1+He_1)u+\int_{\partial B}\partial_\nu e_1u\\&=\lambda_1 \int_B ue_1+\int_{\partial B}\underbrace{\partial_\nu e_1}_{<0}\underbrace{u}_{>0}<\lambda_1 \int_B \underbrace{ue_1}_{>0}
    \end{align*}
		which results in a contradiction and so \eqref{eq:characterisation lagrange parameter} is proven. 
Now, let $\varphi_0 \in H^1_{\rad}(\R^N) \backslash \{0\}$ be a non-negative minimizer of $\lambda_1(V)$. We have
        $$-\lambda=\min\sigma(Q)\leq \frac{\int |\nabla \varphi_0|^2+ \int H|\varphi_0^2|}{\int|\varphi_0|^2}<\frac{\int |\nabla \varphi_0|^2+ \int V|\varphi_0^2|}{\int|\varphi_0|^2} = \lambda_1(V),$$
finishing the proof.
\end{proof}
We are now in position to give the proof of Theorem \ref{theo main aproximation}. 

\begin{proof}[Proof of Theorem \ref{theo main aproximation}.] In view of Lemma \ref{lemma: the mountain pass} we can apply Proposition \ref{lemma:result critical point theory} to obtain, for almost every $\rho\in[\frac12,1]$, a bounded Palais Smale sequence $\{u_n\}$ in $H^1_{\mu,\rad}(\R^N)$ for the functional $E_{\rho}$ constrained to $H_{\mu,\rad}^1(\R^N)$.
Namely,
\begin{equation*}
E_{\rho}(u_n) \to c_{\rho}
\end{equation*}
and, see Remark \ref{remark almost lagrange},
\begin{equation}\label{const crit}
  E'_{\rho}(u_n) +  \lambda_n (u_n,\cdot) \to 0
  \quad \text{in the dual of }  H^1_{\mu,\rad}(\R^N),
\end{equation}
where 
\begin{equation}\label{lambda}
  \lambda_n:= -\frac{1}{\mu} E'_\rho(u_n)[u_n].
\end{equation}
Moreover, from Proposition \ref{lemma:result critical point theory} (iv), there exists a sequence $\{\zeta_n\}\subset \mathbb{R}^+$ with $\zeta_n\to 0^+$ such that, if the inequality
\begin{align} \label{L-Hess crit}
\int_{\R^N}|\nabla\varphi|^2+(V(x)+\lambda_n-(q-1)\rho&|u_n|^{q-2})|\varphi^2|dx
  < -\zeta_n \|\varphi\|_{H^1(\R^N)}^2
\end{align}
holds for any $\varphi \in W_n \setminus \{0\}$ for $W_n$ a subspace of
$T_{u_n} H^1_{\mu,\rad}(\R^N)$, then the dimension of $W_n$ is at most
$1$. The above inequality uses the following relation
\begin{equation}\label{eq second derivative}
    \int_{\R^N}|\nabla\varphi|^2+(V(x)+\lambda_n-(q-1)\rho|u_n|^{q-2})|\varphi^2|dx
  = E''_\rho(u_n)[\varphi, \varphi]
  + \lambda_n \|\varphi\|_{L^2(\R^N)}^2,
\end{equation}
which is obtained from direct computation. In addition, as explained in \cite[Remark 1.8]{BoChJeSo24}, since $u_0,$ $u_1 \geq 0$, $u \in H_{\mu,\rad}^1(\R^N) $ implies that $ |u| \in H_{\mu,\rad}^1(\R^N)$,  the map $u \mapsto |u|$ is continuous and $E_{\rho}(u) = E_{\rho}(|u|)$, it is possible to choose $\{u_n\}$ with the property that $u_n \geq 0$, for all $n \in \N$.
 
Since $\{u_n\} \subset {H^1_{\mu,\rad}(\R^N)}$ is also bounded, using \eqref{lambda} it follows that the sequence of almost Lagrange multipliers $\{\lambda_n\} \subset \R$ is bounded. Therefore, passing to subsequences, there exist $\lambda_\rho\in \mathbb{R}$ and
$u_\rho\in H^1_\rad(\R^N)$ such that
\begin{align}\lim_{n\to\infty}&\lambda_n=\lambda_{\rho},\label{3-5-1}\\
  u_n\rightharpoonup\ &u_{\rho}
  \quad\text{in } H^1_{\rad}(\R^N),
    \label{3-5-2}\\
    u_n\to\ &u_{\rho} 
  \quad\text{in } L^s_\rad(\R^N), \quad 2<s<2^*,
    \label{3-5-3}\\
     u_n(x)\to\ &u_\rho(x) \quad \text{a.e. in }\ x\in \R^N \label{3-5-4}.
\end{align}
For any  $\eta\in H^1_\rad(\R^N)$, we have from \eqref{const crit} that
\begin{align}
  0
  &= \lim_{n\to\infty} \bigl( E_{\rho}'(u_n)
    + \lambda_n (u_n, \cdot) \bigr) [\eta] \nonumber\\ 
  &= \lim_{n\to\infty}\left[
    \int_{\R^N} \nabla u_n \nabla\eta  + \int_{\R^N} V(x)u_n\eta + \lambda_n \int_{\R^N} u_n\eta
    - \rho \int_{\R^N}|u_n|^{q-2}u_n \eta\right].\label{Eq:limit PS condition2}
\end{align}
Note as well that, since $\{u_n\} \subset {H^1_{\mu,\rad}(\R^N)}$ is bounded, from \eqref{const crit}, \eqref{3-5-1} we have that
\begin{equation}\label{AL1}
\int_{\R^N} |\nabla u_n|^2 + V(x) |u_n|^2  + \lambda_{\rho}\int_{\R^N}|u_n|^2 - \int_{\R^N} \rho |u_n|^q= o_n(1), \quad \mbox{as } n \to \infty.
\end{equation}
From \eqref{Eq:limit PS condition2} and using \eqref{3-5-1}--\eqref{3-5-3} we deduce that
\begin{equation}\label{Eq:weakurho}
0 = \int_{\R^N} \nabla u_\rho \nabla\eta + \int_{\R^N} V(x)u_\rho\eta+\lambda_\rho \int_{\R^N} u_\rho\eta
    - \rho\int_{\R^N}|u_\rho|^{q-2}u_\rho\eta.
\end{equation}
This shows that $u_\rho$ is a weak solution to
$$ - \Delta u + V(x) u + \lambda_{\rho}u = \rho|u|^{q-1}u, \quad u \in H^1_{rad}(\R^N).$$
Let us now  show the strong convergence of $\{u_n\} \subset {H^1_{\mu,\rad}(\R^N)}$.  First, since the codimension of $T_{u_n} H^1_{\mu,\rad}(\R^N)$ in $H^1_\rad(\R^N)$ is $1$, we infer that if inequality \eqref{L-Hess crit} holds for every $\varphi \in W_n \backslash \{0\}$ in a subspace $W_n$ of $H^1_\rad(\R^N)$ of dimension at most 1, then the dimension of $W_n$ is at most 2. Second, by Lemma \ref{louis2}, for every $\lambda < - \lambda_{ess}(V)$, there exists a subspace $Y$ of $H^1_\rad(\R^N)$ with $\dim Y \geq 3$ and $a>0$ such that, for $n \in \N$ large,
\begin{equation*}
E_\rho''(u_\rho)[w,w]+\lambda_\rho[w,w] \leq \int_{\R^N} |\nabla u_\rho|^2 + V(x)|u_\rho^2| + \lambda \int_{\R^N}|w|^2 \leq - a ||w||^2, \quad \mbox{for all } w \in Y \backslash \{0\}.
\end{equation*}
Therefore, combining Lemma \ref{lemma:Louis1} with the two previous observations we obtain that 
\begin{equation}\label{eq lambda non negative}
    \lambda_{\rho}\geq - \lambda_{ess}(V).
\end{equation}
Now, choosing $\eta = u_{\rho}$ in \eqref{Eq:weakurho}
and subtracting \eqref{Eq:weakurho} from \eqref{AL1}
we get, using \eqref{3-5-2}--\eqref{3-5-3},
\begin{equation}\label{from weak to strong convergence}
    \int_{\R^N} |\nabla u_n-\nabla u_\rho|^2 +\int_{\R^N}  V(x)|u_n-u_\rho|^2 + \lambda_\rho \int_{\R^N} |u_n-u_\rho|^2\to 0.
\end{equation} 
From \eqref{AL1} we may deduce that $u_\rho$ is non-zero. Indeed assuming by contradiction that $u_\rho\equiv 0$ 
and thus that $u_n \to 0$ in $L^q(\R^N)$, we have, in view of \eqref{eq:MP definition},
\begin{align}
    \frac{1}{2}\mu^2 \lambda_{ess}(V) < c_\rho =&\lim_{n\to\infty} E_\rho(u_n)= \lim_{n\to\infty}
    \frac{1}{2}\int_{\R^N}|\nabla u_n|^2+ \frac{1}{2} \int_{\R^N}V(x)|u_n|^2-\frac{\rho}{q}\int_{\R^N}|u_n|^q
    \nonumber\\ 
    =&\lim_{n\to\infty}\frac{1}{2}\left[ \int_{\R^N}|\nabla u_n|^2 + \int_{\R^N}V(x)|u_n^2|\right]+o_n(1) 
\end{align}
from which we deduce using  \eqref{AL1} that
\begin{equation}\label{negative} 
\lambda_{\rho}< - \lambda_{ess}(V).
\end{equation}
Combining \eqref{eq lambda non negative} and \eqref{negative} we obtain a contradiction. This proves that $u_{\rho} \not \equiv 0.$ Now, in view of \eqref{3-5-4} we obtain that $0\not\equiv u_{\rho}\geq 0$ and so by the strong maximum principle, see for example \cite[Proposition 1.3]{LaMo24}, $u_\rho>0$. In particular, we may now apply Lemma \ref{lemma: weak potential conditions} allowing to improve \eqref{eq lambda non negative} to
\begin{equation}\label{eq lambda positive}
    \lambda_{\rho}> -\lambda_1(V).
\end{equation}
Knowing that \eqref{eq lambda positive} holds, we directly deduce from \eqref{from weak to strong convergence} that $u_n\to u_\rho$ in $H^1_\rad(\R^N)$.

Lastly, it remains to show that the Morse index of $u_\rho$ is at most 2. 
Observe that from Remark \ref{strong-convergence} we already know that the Morse index of $u_\rho$ as a constrained critical point is at most 1. 
Now, from \eqref{eq second derivative}, for every $w\in H^1(\R^N)$ there holds that
\begin{align*}
    D^2E_\rho(u_\rho)[w,w]:&=E_\rho''(u)[w,w]+\lambda_\rho[w,w]
    =Q^N_V(w;u_\rho).
\end{align*}
Thus, using in a similar manner as before that $H_{\mu,\rad}^1(\R^N)$ is of codimension 1 in $H^1_\rad(\R^N)$ along with Remark \ref{rem: morse connexions} we directly deduce
\begin{equation}\label{eq:Morse index urho}
    \morse(u_\rho) \leq 2,
\end{equation}
completing the proof.
\end{proof}

Let us now show how Theorem \ref{theo:main} can be proven using Theorem \ref{theo main aproximation} and assuming Theorem \ref{theo maxima to 0}.
The proof of the latter will be given in Section \ref{section: blow up}.
\begin{proof}[Proof of Theorem \ref{theo:main}.]
    Theorem \ref{theo main aproximation} gives the existence, for almost every $\rho\in[\frac12,1]$, of a solution $(u_\rho,\lambda_\rho)\in H^1_{\mu,\rad}(\R^N)\times \R^+$ to \eqref{EQ:bounded approximation}. In particular $\|u_\rho\|_{L^2(\R^N)}=\mu$ and
		\begin{equation}\label{ALL1}
E_{\rho}(u_{\rho})=c_{\rho}=\inf_{\gamma\in \Gamma}\max_{t\in [0,1] }E_{\rho}(\gamma(t))\leq \inf_{\gamma\in \Gamma}\max_{t\in [0,1] }E_{\frac12}(\gamma(t))=c_\frac12,
\end{equation}
showing that their energy is uniformly bounded. In addition, recall that $\morse(u_{\rho}) \leq 2$. {Choose a sequence $\{\rho_n\} \subset [\frac12,1]$ with $\rho_n \to 1^-$ whose associated sequence of solutions is $\{(u_{\rho_n},\lambda_{\rho_n})\} \subset H^1_{\mu,\rad}(\R^N)\times \R^+$,} which we will denote by 
$\{(u_n,\lambda_n)\}$ for simplicity.
One finds that the triplets $\{(\rho_n,\lambda_n,u_n)\}\in[\frac12,1]\times\R\times H_\rad^1(\R^N)$ 
are {solutions} to \eqref{EQ:bounded approximation} such that \eqref{eq mass energy unif bound} holds and $\sup_{n\in \N} \morse(u_n) \leq 2$. Applying Theorem \ref{theo maxima to 0} we deduce that the sequence $\{\lambda_n\} \subset \R$ is bounded. Now, observing that
\begin{align*}
    E_{\rho_n}(u_{n})&=\frac{1}{2}\int_{\R^N}|\nabla u_{n}|^2+\frac{1}{2} \int_{\R^N}V(x)|u_{n}|^2-\frac{\rho_n}{q}\int_{\R^N}|u_{n}|^q \nonumber
    \\
    &=\left(\frac{1}{2}-\frac{1}{q}\right)\int_{\R^N}|\nabla u_{n}|^2+\left(\frac{1}{2}-\frac{1}{q}\right)\int_{\R^N}V(x)|u_{n}|^2-\frac{\lambda_{n}}{q}\int_{\R^N}|u_{n}|^2 \nonumber
    \\
    &\geq \left(\frac{1}{2}-\frac{1}{q}\right)\int_{\R^N}|\nabla u_{n}|^2 +\left(\frac{1}{2}-\frac{1}{q}\right)\underline{V}\int_{\R^N}|u_{n}^2|-\frac{\lambda_{n}}{q}\int_{\R^N}|u_{n}|^2,
\end{align*}
it follows that
\begin{equation}\label{eq:gradient boundedness Rn}
    \left(\frac{q-2}{2q}\right)\|\nabla u_n\|_2^2\leq E_{\rho_n}(u_n)+\frac{\lambda_n\mu^2}{q} - \left(\frac{q-2}{2q}\right)\underline{V}\mu^2.
\end{equation}
Thus, in view of \eqref{ALL1}, knowing that $\{\lambda_n\} \subset \R$ is bounded implies that $\{u_{n}\}$ is bounded in $H^1(\R^N)$. 
Also, as in \cite[Lemma 2.3]{Je99}, we can prove that $\rho \mapsto c_{\rho}$ is a continuous function from the left and thus that
$$\lim_{\rho_n \to 1^-}c_{\rho_n}=c_1.$$
At this point, it is easily seen that $\{u_{n}\}$ is a (bounded) PS sequence for $E = E_1$ constrained to $H^1_{\mu,\rad}(\R^N)$ at the level $c_1$. Then, proceeding as in the proof of Theorem \ref{theo main aproximation}, we can show that $\{u_n\}$ converges to a limit $u\in H^1_\mu(\R^N)$ which, by the previous argument, is a critical point of $E$ constrained to $H^1_{\mu,\rad}(\R^N)$ at mountain pass level $c_1$. In all, $u$ is the sought solution that satisfies the conclusions of Theorem \ref{theo:main}. 
\end{proof}

We end this section by proving the existence of a second solution as a constrained local minimum. Here we directly work with the Energy functional $E$ defined in \eqref{eq:functional Rn}. 
\begin{proof}[Proof of Theorem \ref{theo: local min}.]
In Remark \ref{ALR} we have observed that, for all $0 < \mu < \mu_0$, there exists a bounded {set $\Sigma_{\mu}$} such that
\begin{equation}\label{eq negative local min}
\delta_{\mu}:= \inf \Big\{E(u)\colon u\in \Sigma_{\mu}\Big\}< \frac{1}{2}\mu^2 \lambda_1(V) < \inf_{u\in\partial\Sigma_{\mu}}E(u)
\end{equation}
where $E=E_1$. Let $\{u_n\} \subset \Sigma_{\mu}$ be a minimizing sequence for $\delta_{\mu}$ such that every $u_n$ is a non negative function. Invoking Ekeland's variational principle, without loss of generality we may assume that there is an arbitrarily close minimizing sequence which is in addition a Palais-Smale sequence. Since, by construction, the sequence is bounded, passing to a subsequence we have $u_n\rightharpoonup u \geq 0$ in $H^1_\rad(\R^N)$ and $u_n \to u$ in $L^q_\rad(\R^N)$. Due to the first inequality in \eqref{eq negative local min} there necessarily holds $u \not \equiv 0$ and thus $u>0$ by the strong maximum principle. At this point we can proceed as in the proof of Theorem  \ref{theo main aproximation} to conclude 
 that $u_n\to u$ in $H^1_\rad(\R^N)$ with $u \in H^1_{\mu}(\R^N)$ a constrained critical point of $E$ at the level $\delta_{\mu}$. Now, combining \eqref{eq:MP definition} used with $\rho =1$ and \eqref{eq negative local min}, we deduce that
$$c_1 \geq \inf_{u\in\partial\Sigma_{\mu}}E(u) > \frac{1}{2}\mu^2 \lambda_1(V) > \delta_{\mu}.$$
This guarantees that the Energy level of the local minimizer is strictly below the one of the solution found in Theorem \ref{theo:main} and ends the proof.

\end{proof}

\section{A blow-up analysis and proof of Theorem \ref{theo maxima to 0} }\label{section: blow up}

The aim of the section is to prove Theorem \ref{theo maxima to 0}. We begin by showing that if $\lambda_n\to \infty$ then sequences of local maxima actually blow-up. 
\begin{lemma}\label{Lemma maxima bounds}
    Let $(V1)$ hold and $\{(\rho_n,\lambda_n,u_n)\}\in[\frac12,1]\times\R\times H_\rad^1(\R^N)$ be a sequence of solutions to Equation \eqref{L_EQ:bounded approximation}.
  Suppose $x_n$ is a local maximum of $u_n$. Then
    $$ (\lambda_n + \underline{V})^\frac{1}{q-2}\leq  u_n(x_n).$$
\end{lemma}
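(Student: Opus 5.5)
At a local maximum $x_n$ of $u_n$ we expect $-\Delta u_n(x_n)\geq 0$. Plugging this into the equation gives
\[
(V(x_n)+\lambda_n)\,u_n(x_n)\le \rho_n u_n(x_n)^{q-1}\le u_n(x_n)^{q-1}.
\]
Dividing by $u_n(x_n)>0$ and using $V\ge \underline{V}$ would then yield $\lambda_n+\underline{V}\le u_n(x_n)^{q-2}$. When $\lambda_n+\underline V\le 0$ the claim is trivial, since its left-hand side is nonpositive (or undefined, and we interpret it as $0$). So we may assume $\lambda_n+\underline V>0$.

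The difficulty is that $V$ is only $L^\infty$, so $u_n\in W^{2,\infty}_\loc\cap C^{1,\alpha}_\loc$ (Remark \ref{R2}) but not $C^2$. Pointwise evaluation of $\Delta u_n$ and of $V$ at $x_n$ is therefore not meaningful. I would replace the pointwise argument by a local integrated one. Argue by contradiction: suppose $u_n(x_n)^{q-2}<\lambda_n+\underline V$. By continuity there is a ball $B=B(x_n,r)$ on which $u_n^{q-2}<\lambda_n+\underline V$. On $B$, a.e.,
\[
-\Delta u_n=\rho_n u_n^{q-1}-(V+\lambda_n)u_n\le \bigl(u_n^{q-2}-\lambda_n-\underline V\bigr)u_n<0 ,
\]
so $u_n$ is a strong $W^{2,N}$ (indeed $W^{2,p}$ for all $p$) subsolution, $-\Delta u_n\le 0$, on $B$.

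Next I would apply the strong maximum principle for $W^{2,N}_\loc$ subharmonic functions (Aleksandrov/Bony type, or simply the weak mean-value inequality for weakly subharmonic $H^1$ functions). Since $u_n$ attains its maximum over $B$ at the interior point $x_n$, this forces $u_n\equiv u_n(x_n)$ on $B$. Then $\Delta u_n=0$ a.e. on $B$, while the displayed strict inequality gives $-\Delta u_n<0$ a.e. on $B$, which is a contradiction. Hence $u_n(x_n)^{q-2}\ge\lambda_n+\underline V$, which is the claim. The inequality $\rho_n\le1$ was used above, and positivity of $u_n$ comes from \eqref{L_EQ:bounded approximation}.

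The main obstacle is this regularity issue: making the "Laplacian at a maximum" step rigorous without $C^2$. My first choice is the integrated maximum-principle argument above. If needed, one can instead work on a small ball and use the mean-value property: $\fint_{\partial B(x_n,s)}u_n-u_n(x_n)$ equals an integral of $\Delta u_n$ with positive weights. The left side is $\le 0$ by maximality, and the right side is $>0$ by the strict inequality. This gives the same contradiction.
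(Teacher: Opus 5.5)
Your proof is correct, and it rests on the same idea as the paper's: the Laplacian has a sign at a local maximum, you insert this into the equation, divide by $u_n(x_n)>0$, and use $\rho_n\le 1$ and $V\ge\underline V$. The difference is in how the key step is justified. The paper evaluates everything pointwise, citing $u_n\in W^{2,\infty}_{\loc}$ to write
\[
0\le-\Delta u_n(x_n)=\rho_n u_n(x_n)^{q-1}-(V(x_n)+\lambda_n)u_n(x_n).
\]
You observe, correctly, that neither $\Delta u_n(x_n)$ nor $V(x_n)$ has a pointwise meaning when $V\in L^\infty$.

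You instead argue by contradiction on a small ball around $x_n$. There $\Delta u_n>0$ holds a.e., and the strong maximum principle for $W^{2,N}_{\loc}$ subharmonic functions (or the mean-value formula) forces $u_n$ to be constant on the ball. This contradicts $\Delta u_n>0$ a.e.

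The paper's version is a one-liner. Yours is what is actually needed to make that one-liner rigorous, and it uses only $W^{2,N}_{\loc}\cap C^0$ regularity, with no radial symmetry and no $W^{2,\infty}$ bound. You also treat the case $\lambda_n+\underline V\le 0$ explicitly, which the paper leaves implicit.

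One detail to state explicitly: you should shrink the ball so that $x_n$ is a maximum of $u_n$ on all of $B$. You use this when you invoke the strong maximum principle, but you do not say it.
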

\begin{proof}
    Since $(V1)$ holds recall that $u_n\in W^{2,\infty}_\loc(\R^N)$, for any $n \in \N$. We have $-\Delta u_n(x_n)\geq 0$ and so 
    \begin{align*}
        0\leq - \Delta u_n(x_n) = \rho_n(u_n(x_n))^{q-1}- (V(x_n)+ \lambda_n) u_n(x_n).
    \end{align*}
    Dividing by $u_n(x_n)>0$, rearranging and computing we get
\begin{align*}
    (\lambda_n + \underline{V})\leq V(x_n)+\lambda_n&\leq (u_n(x_n))^{q-2}
\end{align*}
and the claim follows.
\end{proof}

The next two lemmas are related to certain limiting problems of interest.
\begin{lemma}\label{lemma: morse limit problem}
Let $U$ be a non trivial non-negative solution to
\begin{equation}\label{CaseN}
- \Delta u+\lambda u= u^{q-1}\quad x	\in \R^d, \quad d \geq 2
\end{equation}
where $\lambda >0$, having a finite Morse index. Then
\begin{itemize}
\item[i)] $U$ decays to $0$ at infinity, $U \in H^1(\R^d)$ and it is the unique positive solution to \eqref{CaseN}. 
\item[ii)] The Morse index of $U$, as defined in Definition \ref{def Morse index}, is positive.
\end{itemize}
\end{lemma}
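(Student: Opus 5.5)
The plan is to treat the two items separately, relying on known results for finite Morse index solutions of Lane–Emden type equations on the whole space.

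\emph{Item i).} Two classical facts do most of the work.

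First, a solution with finite Morse index is stable outside a compact set. Here "Morse index" is understood as in Definition \ref{def Morse index}, with test functions in $C_c^\infty(\R^d)$. So there is $R_0$ such that $Q_0^d(\varphi;U)\ge 0$ for all $\varphi\in C_c^\infty(\R^d\setminus B_{R_0})$. This follows from the standard argument: if no such $R_0$ existed, we could pick negative directions with pairwise disjoint supports and obtain arbitrarily large Morse index.

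Second, I would invoke the blow-up / doubling argument of Farina, or more directly the results of Bahri–Lions and Dancer for $-\Delta u+\lambda u=u^{q-1}$ with $q<2^*$, which give $U\in L^\infty$ and $U(x)\to 0$ as $|x|\to\infty$. The mechanism is as follows. If $U(x_k)\to\infty$ or $U(x_k)\ge\delta>0$ with $|x_k|\to\infty$, rescale around $x_k$. One obtains a nontrivial limit solution which is either a bounded solution of $-\Delta w=w^{q-1}$ on $\R^d$ (excluded by Gidas–Spruck Liouville in the subcritical range) or a solution of \eqref{CaseN} that is stable on all of $\R^d$. The stable case is impossible, since testing with $w$ times a cutoff gives $Q<0$ because $(q-2)\int w^{q}>0$.

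Once $U\to 0$ at infinity, we have $\lambda-U^{q-2}\ge \lambda/2$ outside a large ball. The comparison principle with $e^{-\sqrt{\lambda/2}|x|}$ then gives exponential decay of $U$, and standard elliptic estimates give the same decay for $\nabla U$; hence $U\in H^1(\R^d)$. The strong maximum principle gives $U>0$. By Gidas–Ni–Nirenberg, $U$ is radial about some point, and Kwong's uniqueness theorem identifies it (up to translation) with the ground state. I expect this step, ruling out non-decay via blow-up at infinity, to be the main obstacle. I would cite Farina's or Dancer's finite Morse index classification rather than redo it.

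\emph{Item ii).} This is a short computation. Since $U\in H^1$ solves the equation, testing with $U$ gives
\[
Q_0^d(U;U)=\int|\nabla U|^2+\lambda U^2-(q-1)U^q=-(q-2)\int U^q<0.
\]
Approximate $U$ in $H^1$ by $C_c^\infty$ functions $\varphi_k$; by continuity of $Q_0^d(\cdot\,;U)$ on $H^1$ (recall $U\in L^\infty$), we get $Q_0^d(\varphi_k;U)<0$ for large $k$.

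For the radial setting of Definition \ref{def Morse index}, note that $U$ is radial about a point. Translating so that the center is the origin, $U$ is radial, and the cutoffs $\varphi_k$ can be chosen radial. This gives a one-dimensional negative subspace, so $\morse(U)\ge1$.
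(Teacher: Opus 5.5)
Your proof is correct, and it follows a different route from the paper, whose proof is a pure citation. The paper invokes \cite[Theorem 1.1]{EsPe11} for $\lambda=1$ and asserts that the extension to $\lambda>0$ follows by inspecting that proof. In fact the scaling $W=\lambda^{-1/(q-2)}U(\cdot/\sqrt{\lambda})$ reduces to $\lambda=1$ at once and multiplies the quadratic form by $\lambda^{d/2-1}>0$, so it preserves the Morse index, radial or not. Your sketch (stability outside a compact set, blow-up plus Liouville for boundedness and decay, comparison for exponential decay, then Gidas--Ni--Nirenberg and Kwong) is the standard argument behind such a classification. For i) this makes the mechanism visible, but you still rely on a citation for the decay step. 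For ii) your computation $Q_0^d(U;U)=-(q-2)\int U^q<0$ followed by radial density is fully self-contained; it is the same device the paper uses in Lemma \ref{2_lemma lagrange phase plane}.

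Two points need tightening.

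\emph{The cutoff test.} Saying that ``testing with $w$ times a cutoff'' gives $Q<0$ is not enough as stated when $d\ge 3$. One has $Q_0^d(w\chi;w)=\int w^2|\nabla\chi|^2-(q-2)\int w^q\chi^2$. For $w$ merely bounded, the first term is of order $L^{d-2}$ for a cutoff at scale $L$, and nothing forces $\int_{B_L}w^q$ to grow that fast. Take $\chi=\eta_L^m$ with $m\ge q/(q-2)$; Young's inequality then gives $\int w^2|\nabla\chi|^2\le\frac{q-2}{2}\int w^q\chi^2+CL^{d-\frac{2q}{q-2}}$. Since $q<2^*$ is exactly the condition $d<\frac{2q}{q-2}$, stability forces $w\equiv0$.

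\emph{The radial Morse index.} Definition \ref{def Morse index} uses radial test functions in $H^1_{\rad}(\R^N)\cap C_c^\infty(\R^N)$, not all of $C_c^\infty(\R^d)$ as you wrote. Only this radial bound is available where the lemma is applied (Theorem \ref{mainblow}, (b')). Under it, stability outside a compact set holds only against radial test functions, so a translated limit at infinity need not be stable on $\R^d$. Instead, project radial test functions $\psi(|x|-r_k)$ as in the proof of Theorem \ref{mainblow}(a). The limit is then a one-dimensional solution of $-w''+\lambda w=w^{q-1}$ that is stable against $C_c^\infty(\R)$. The cutoff test works directly in one dimension, because the error term is $O(1/L)$. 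Boundedness is also free in this setting, since $\frac12(u')^2-\frac{\lambda}{2}u^2+\frac1q u^q$ is nonincreasing in $r$ for radial solutions.

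With these two adjustments your argument proves the lemma in the form the paper actually uses.
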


\begin{proof}
These results when $\lambda=1$ are given in \cite[Theorem 1.1]{EsPe11}. As it can be checked following its proof, see in particular \cite[Remark 2.2 and Theorem 2.3]{EsPe11}, they extend to any $\lambda >0$.
\end{proof}

\begin{proposition}\label{lemma lagrange phase plane}
Let $U\in C^2(\R)$ be the solution to the ODE
\begin{equation}\label{equationdepart}
    \begin{cases}
         &  -U''+\beta U= U^{q-1}, \quad \mbox{in } \R, \\
         &  0 < U(r) \leq U(0) =1, \quad \mbox{in } \R,
    \end{cases}
		\end{equation}
		for some $\beta\geq 0$. Then $\beta\geq \frac{2}{q}$ and
    \begin{itemize}
        \item if $\beta>\frac{2}{q}$, then the Morse index of $U$ is infinite;
        \item if $\beta=\frac{2}{q},$ then $U$ belongs to $H^1(\R^N)$, decays exponentially and
        \begin{equation*}
            \frac{2}{q}\int_\R U^2=\left(\frac12+\frac{1}{q}\right)\int_\R U^q.
        \end{equation*}
				\end{itemize}
\end{proposition}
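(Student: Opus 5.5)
Multiplying \eqref{equationdepart} by $U'$ and integrating shows that the quantity
\[
H(r):=\tfrac12 (U')^2-\tfrac{\beta}{2}U^2+\tfrac1q U^q
\]
is constant along $U$. Since $U$ attains its maximum at $r=0$, we have $U'(0)=0$, so $H\equiv \frac1q-\frac{\beta}{2}$. The whole argument is a phase-plane analysis of the Hamiltonian system in $(U,U')$ on this fixed level set.

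\textbf{Step 1: $\beta\ge \frac2q$.} Suppose $\beta<\frac2q$. Then the potential $P(U)=\frac{\beta}{2}U^2-\frac1qU^q$ satisfies $P(1)<0$. The energy identity gives
\[
\tfrac12 (U')^2=P(U)-P(1).
\]
Moreover $U''(0)=\beta-1$, and $\beta<\frac2q<1$ because $q>2$, so $U''(0)<0$ and $U$ decreases from $1$. On $(0,1)$ the function $P$ increases up to $U=\beta^{1/(q-2)}$ and then decreases, so $P(U)-P(1)>0$ on $[0,1)$. This equation has no turning point below $1$. Hence $U'$ keeps a strict sign and stays bounded away from $0$ as long as $U\in[0,1-\delta]$. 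Therefore $U$ reaches $0$ at a finite $r$, contradicting $U>0$.

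If $\beta=0$, then $U''=-U^{q-1}<0$, so $U$ is concave and positive on all of $\R$. It must then be constant, which is impossible.

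\textbf{Step 2: the case $\beta>\frac2q$.} Now $H<0$, and $U(0)=1$ lies between the equilibrium $\beta^{1/(q-2)}$ and the larger root of $P(U)=P(1)$. Since $U''(0)=\beta-1$ and $\beta>1$ would put $U$ below the center, we take the orbit to be a closed orbit around the center $(\beta^{1/(q-2)},0)$ whose maximal $U$-value equals $1$. Hence $U$ is a nonconstant periodic solution with some period $T$.

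To get infinite Morse index, differentiate the equation: $w=U'$ solves the linearized equation $-w''+\beta w-(q-1)U^{q-2}w=0$. Take a half-period interval $I_k=[kT,kT+T/2]$ between consecutive zeros of $U'$. The function $\varphi_k=U'\mathbf 1_{I_k}$ then satisfies $Q_0^1(\varphi_k;U)=0$, by integrating by parts on $I_k$.

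A small compactly supported perturbation makes each quadratic form value strictly negative: replace $\varphi_k$ by $U'$ on a slightly larger interval, cut off, exactly as in the classical Sturm argument showing that the first Dirichlet eigenvalue on an interval strictly containing a nodal interval is negative. Finally mollify, to get $C_c^\infty$ functions. Because the supports of these functions are disjoint for infinitely many $k$, they span subspaces of arbitrary dimension on which $Q_0^1<0$. So $\morse(U)=\infty$.

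\textbf{Step 3: the case $\beta=\frac2q$.} Now $H=0$ and the level set is the homoclinic orbit to $(0,0)$. Indeed $P(U)<0=P(1)$ on $(0,1)$, so $U$ decreases monotonically on $r>0$ toward $0$, and $U$ is even by uniqueness for the ODE. Near $0$ we have $U'\approx -\sqrt{\beta}\,U$, which gives exponential decay $U(r)\le Ce^{-\sqrt{\beta}|r|}$, and the same for $U'$. Hence $U\in H^1(\R)$.

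For the identity, use the pointwise relation $\frac12(U')^2=\frac{\beta}{2}U^2-\frac1qU^q$, which holds because $H=0$. Integrating gives
\[
\tfrac12\int (U')^2=\tfrac\beta2\int U^2-\tfrac1q\int U^q.
\]
Multiplying the equation by $U$ and integrating (boundary terms vanish by the decay) gives
\[
\int (U')^2+\beta\int U^2=\int U^q.
\]
Eliminating $\int (U')^2$ between these two relations yields $2\beta\int U^2-\frac2q\int U^q=\int U^q$, that is, $\beta\int U^2=(\frac12+\frac1q)\int U^q$, which for $\beta=\frac2q$ is the stated identity.

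\textbf{Main obstacle.} The delicate point is Step 2. One has to produce genuinely strictly negative test functions from the zero-energy nodal pieces $U'\mathbf 1_{I_k}$, and to make sure the periodic orbit really is nondegenerate, i.e.\ nonconstant. I will handle strict negativity with the Sturm comparison and Dirichlet-eigenvalue monotonicity argument on intervals slightly longer than a half period.
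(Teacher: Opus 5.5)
Your argument is correct in substance, and it is self-contained where the paper is not. The paper does not prove the case $\beta>0$ itself; it cites \cite[Appendix A.1]{EMSS}, and it excludes $\beta=0$ by noting that nontrivial solutions of $-u''=|u|^{q-2}u$ are periodic and sign-changing. You instead work on the fixed energy level $H\equiv\frac1q-\frac\beta2$ throughout. Step 1 rules out every $\beta<\frac2q$ at once, including $\beta=0$; your concavity remark there is an even quicker alternative to the paper's phase-plane remark. For $\beta>\frac2q$ you make the infinite Morse index explicit: $U'$ is a one-signed Dirichlet zero mode on each half period, strict domain monotonicity of the first Dirichlet eigenvalue gives a negative direction, and disjoint supports give infinitely many. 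For $\beta=\frac2q$ you get the integral identity from the pointwise relation $H=0$ combined with the equation tested against $U$. The citation buys brevity. Your route spares the reader the reference, and it also shows $\beta\le 1$, since $U''(0)=\beta-1$ must be nonpositive at a maximum.

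Three slips should be fixed.

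(i) When $\beta=1$, the constant $U\equiv 1$ (the center itself) satisfies all the hypotheses. So your Step 2 claim that $U$ is a nonconstant periodic solution fails there, and the test functions $U'\mathbf 1_{I_k}$ vanish identically. Handle this case directly: $Q_0^1(\varphi;1)=\int_\R\varphi'^2-(q-2)\int_\R\varphi^2$ is negative on a sufficiently wide bump and on its disjoint translates. For $\frac2q<\beta<1$ the point $(1,0)$ is not the equilibrium $\beta^{1/(q-2)}$, so the orbit is genuinely nonconstant; this settles the worry in your last paragraph.

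(ii) In Step 2, $1$ is itself the larger root of $P(U)=P(1)$. What you actually need is that $1$ lies strictly between the center $\beta^{1/(q-2)}$ and the positive zero $(q\beta/2)^{1/(q-2)}$ of $P$, i.e.\ that the orbit lies inside the homoclinic loop.

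(iii) In Step 3 the inequality is reversed. For $\beta=\frac2q$ one has $P(U)=\frac1q U^2(1-U^{q-2})>0=P(1)$ on $(0,1)$. It is this positivity of $(U')^2=2P(U)$ that gives strict monotonicity and the exponential decay.
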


\begin{proof}
When $\beta >0$, the proposition is proved in \cite[Appendix A.1]{EMSS}. When $\beta =0$, a phase plane analysis of the equation $-u'' = |u|^{q-2}u$ in $\R$ shows that its non-trivial solutions are periodic sign-changing.
\end{proof}

From the above proposition we directly deduce 
\begin{lemma}\label{2_lemma lagrange phase plane}
Let  $U\in C^2(\R)$ be the solution to the ODE

\begin{equation}\label{equationarrivee}
    \begin{cases}
         &  -U''+  U= U^{q-1}, \quad \mbox{in } \R, \\
         &  0 < U(r) \leq U(0) = \left(\frac2{q}\right)^{-\frac1{q-2}}, \quad \mbox{in } \R.
    \end{cases}
		\end{equation}
    Then $U\in H^1(\R)$, decays exponentially and 
    \begin{equation}\label{norm relation sphere}
    \frac{2q}{q+2}\int_\R U^2 = \int_\R U^q.
    \end{equation}
	In addition $U$ has a positive Morse index.  
\end{lemma}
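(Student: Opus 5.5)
The plan is to deduce the lemma from Proposition \ref{lemma lagrange phase plane} by a scaling that normalizes the maximum to $1$. Let $U$ solve \eqref{equationarrivee} and set $M:=U(0)=\left(\frac2q\right)^{-\frac1{q-2}}$, so that $M^{q-2}=\frac q2$. Define $W(r):=M^{-1}U(sr)$ with $s>0$ to be fixed. Then $W(0)=1$ and $0<W\leq 1$. Moreover
\begin{equation*}
-W''(r)+s^2W(r)=s^2M^{-1}U^{q-1}(sr)=s^2M^{q-2}W^{q-1}(r).
\end{equation*}
Choosing $s^2=M^{2-q}=\frac2q$, we see that $W$ solves \eqref{equationdepart} with $\beta=\frac2q$.

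By Proposition \ref{lemma lagrange phase plane} (case $\beta=\frac2q$), $W\in H^1(\R)$, $W$ decays exponentially, and
\begin{equation*}
\frac2q\int_\R W^2=\left(\frac12+\frac1q\right)\int_\R W^q.
\end{equation*}
Since $U(t)=MW(t/s)$, $U$ also lies in $H^1(\R)$ and decays exponentially. For the integral identity, change variables $t=sr$: this gives $\int U^2=sM^2\int W^2$ and $\int U^q=sM^q\int W^q$. Substituting these, and using $M^{q-2}=\frac q2$ together with $\frac12+\frac1q=\frac{q+2}{2q}$, we get
\begin{equation*}
\frac2q\int U^2=\frac{q+2}{2q}\cdot\frac{2}{q}\int U^q,
\end{equation*}
which rearranges to \eqref{norm relation sphere}.

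For the positivity of the Morse index, the plan is to test the quadratic form with $\varphi=U'$ cut off at large radii. Differentiating the equation shows $U'$ solves the linearized equation $-\varphi''+\varphi-(q-1)U^{q-2}\varphi=0$, so $Q_0^1(U';U)=0$. Since $U$ is even with its maximum at $0$, $U'$ changes sign exactly once. This is the standard obstruction: an eigenfunction with one zero sits at the second eigenvalue, so there must be a strictly lower one.

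A more direct route is to test with $\varphi=U$ itself. Multiplying the equation by $U$ and integrating gives $\int U'^2+\int U^2=\int U^q$, hence
\begin{equation*}
Q_0^1(U;U)=\int U^q-(q-1)\int U^q=-(q-2)\int U^q<0.
\end{equation*}
Because $U$ decays exponentially, a smooth compactly supported truncation $\varphi_R=\chi(\cdot/R)U$, suitably mollified, converges to $U$ in $H^1$. For $R$ large this gives $Q_0^1(\varphi_R;U)<0$, so $\morse(U)\geq 1$.

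The only delicate point is this approximation step, which requires the exponential decay already established. Everything else is routine algebra on the exponents.
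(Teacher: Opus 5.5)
Your proposal is correct and follows the paper's proof: rescale $U$ to a solution of \eqref{equationdepart} with $\beta=\frac{2}{q}$, apply Proposition \ref{lemma lagrange phase plane}, and get the positive Morse index by testing with $U$ itself, which gives $Q_0^1(U;U)=(2-q)\int_\R U^q<0$. Your truncation step fills in a detail the paper leaves implicit, since Definition \ref{def Morse index in one dimension} uses $C_c^\infty$ test functions. That step is not delicate and does not need exponential decay: $U\in H^1(\R)\cap L^\infty(\R)$ already makes $Q_0^1(\cdot\,;U)$ continuous on $H^1(\R)$, and $C_c^\infty(\R)$ is dense there.
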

\begin{proof}
The lemma follows from Proposition \ref{lemma lagrange phase plane} observing that  $V$ is solution to \eqref{equationdepart} with $\beta = \frac{2}{q}$, if and only if,
$$U(r) = \Big(\frac{2}{q} \Big)^{- \frac{1}{q-2}} V {\Big(\Big(\frac{2}{q}\Big)^{- \frac{1}{2}}r \Big)}$$
is solution to \eqref{equationarrivee}. We have
\begin{align*}
    Q_{0}^1(U;U)&= \int_{\R} |{\nabla U}|^2 + |U|^2 - \rho(q-1)|U|^{q}=(2-q) \int_{\R}|U|^q<0,
\end{align*}
and thus the Morse index of $U$ is positive.
\end{proof}
We are now ready to perform the blow-up analysis with the following two theorems. In Theorem \ref{mainblow} we study the qualitative behavior of solutions to Equation \eqref{L_EQ:bounded approximation} as $\lambda\to +\infty$ at close neighbourhoods of some points of maxima, which, by Lemma \ref{Lemma maxima bounds} we know must blow-up. In what follows we will often work with the unidimensional associated functions $u(r)$ of a radial function $u(x)$ defined in $\R^N$. 
Additionally, we also consider the unidimensional derivatives of $u(r)$ denoted by $u'(r)$ and $u''(r).$ 

\begin{theorem}\label{mainblow}
Let $(V0)-(V1)$ hold and $\{(\rho_n,\lambda_n,u_n)\}\in[\frac12,1]\times\R\times H_\rad^1(\R^N)$ be a sequence of solutions to \eqref{L_EQ:bounded approximation}. Suppose that $\rho_n\to 1^-$ and $\lambda_n\to+\infty$. Suppose moreover that the Morse index of the sequence $\{u_n\}$ is uniformly bounded, i.e. there exists $m^*<+\infty$ such that,
$$
\sup_{n\in \N} \morse(u_n) \leq m^*.
$$
Let $\{a_n\} \subset [0, + \infty)$ be such that, for some sequence $R_n\to +\infty$,
$$ u_n(a_n)=\max\{u_n(r) \colon\ |r-a_n|<R_n\tilde\epsilon_n\}\quad \text{with} \quad \tilde\epsilon_n:=u_n(a_n)^{-\frac{q-2}{2}}\to 0.$$
Suppose moreover that 
\begin{equation}\label{eq: blow up on sphere}
    \limsup_ {n\to\infty}\frac{a_n}{\tilde\epsilon_n}=+\infty.
\end{equation}
Then, up to subsequences,
\begin{itemize}
  \item[(a)] setting $\epsilon_n:=\lambda_n^{-\frac{1}{2}}$  there holds
\begin{equation}\label{eq:eps n ratio sphere}
    \lim_{n\to\infty}\frac{{\tilde\epsilon_n}}{\epsilon_n}
    =\sqrt{\frac2 q}
\end{equation}
and 
\begin{equation}\label{eq: maxima ration shpere}
    \lim_{n\to\infty}\frac{a_n}{\epsilon_n}=+\infty;
\end{equation}
\item[(b)] the one dimensional scaled sequence
\begin{equation}
\label{eq:good rescaling sphere}
    v_n(s)=\lambda_n^{-\frac{1}{q-2}}u_n\left(\frac{s}{\sqrt{\lambda_n}}+a_n\right), \quad s\in \left(-\lambda_n^{\frac12}{R_n}{\tilde\epsilon_n},\lambda_n^{\frac12}{R_n}{\tilde\epsilon_n}\right),
\end{equation}
converges in $C_{\mathrm{loc}}^1(\R)$ to the (unique) positive solution $\phi \in H^1(\R)$ of
    \begin{equation}
		\label{Eq scaled convergence sphere}
		\left\{\begin{array}{lll}
    -\phi'' +  \phi=  \phi^{q-1}, \quad \text{in}\quad \R \\
    \phi >0\\
    \max\phi=\phi(0)=\left(\frac2{q}\right)^{-\frac{1}{q-2}}
    ;
    \end{array}
        \right.
    \end{equation}
\item[(c)] for each $n \in \N$, there exists a radial function $\phi_n\in C_c^\infty(\R^N)$ with support contained in the annulus
$$A(a_n,\bar R\epsilon_n)=\{x\in\R^N\colon 0<a_n- \bar R\epsilon_n\leq |x|\leq a_n+\bar R\epsilon_n\}$$
 for some $\bar R>0$ such that 
\begin{equation}\label{eq: morse sequence sphere}
    Q_V^N(\phi_n,u_n)<0.
\end{equation}
\end{itemize}
Now consider the complementary case, i.e. suppose that 
\begin{equation}\label{eq: blow up origin}
    \limsup_ {n\to\infty}\frac{a_n}{\tilde\epsilon_n}<+\infty.
\end{equation}
Then, passing to subsequences if necessary,
\begin{itemize}
  \item[(a')] setting $\epsilon_n:=\lambda_n^{-\frac{1}{2}}$  there holds
\begin{equation}\label{eq:eps n ratio origin}
    \sqrt{\tilde\lambda}:=\lim_{n\to\infty}\frac{{\tilde\epsilon_n}}{\epsilon_n}
    \in (0,1]
\end{equation}
and 
\begin{equation}\label{eq: good ratio origin}
    \lim_{n\to\infty}\frac{a_n}{\epsilon_n}=0;
\end{equation}

\item[(b')] the scaled sequence,
\begin{equation}
\label{eq:good rescaling origin}
    v_n(y)=\lambda_n^{-\frac{1}{q-2}}u_n\left(\frac{y}{\sqrt{\lambda_n}}\right), \quad y\in B\left(0,\lambda_n^{\frac12}{R_n}{\tilde\epsilon_n}\right),
\end{equation}
converges in $C_{\mathrm{loc}}^1(\R^N)$ to the unique positive radial solution $\phi \in H^1(\R^N)$ to
\begin{equation}\label{Eq scaled convergence origin}
-\Delta\phi +  \phi=  \phi^{q-1}, 
\end{equation}
\item[(c')] for each $n \in \N$, there exists a radial function $\phi_n\in C_c^\infty(\R^N)$ with support contained in the ball
$$B(0, a_n + \bar R \epsilon_n) =  \{x\in\R^N\colon  |x|\leq a_n+\bar R\epsilon_n\}$$

 for some $\bar R>0$ such that 
\begin{equation}\label{eq:morse sequence origin}
    Q_V^N(\phi_n,u_n)<0;
\end{equation}
\item[(d')] for all $R>0$ and $s\geq 2$ we have
  \begin{equation}
    \label{eq:mass-convergence origin}
    \lim_{n \to \infty} \lambda_n^{\frac{N}{2}-\frac{s}{q-2}}\int_{B(0, a_n + R\lambda_n^{-1/2})}|u_n(x)|^sdx
=\int_{B(0,R)}|\phi|^s dy\ \in (0,+\infty)
  \end{equation}
\end{itemize}
\end{theorem}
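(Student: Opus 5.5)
The approach is a blow-up analysis at the scale $\tilde\epsilon_n$, followed by a comparison of $\tilde\epsilon_n$ with $\epsilon_n=\lambda_n^{-1/2}$ through the Liouville-type results already available (Lemma \ref{lemma: morse limit problem}, Proposition \ref{lemma lagrange phase plane}, Lemma \ref{2_lemma lagrange phase plane}). First, Lemma \ref{Lemma maxima bounds} gives $u_n(a_n)^{q-2}\ge \lambda_n+\underline V$, so $\tilde\epsilon_n\to0$ and $\limsup \tilde\epsilon_n/\epsilon_n\le 1$. I would then rescale by the maximum,
\[
w_n(s)=u_n(a_n)^{-1}u_n(a_n+\tilde\epsilon_n s), \qquad |s|<R_n .
\]
It satisfies $0<w_n\le w_n(0)=1$ and the radial equation
\[
-w_n''-\frac{(N-1)\tilde\epsilon_n}{a_n+\tilde\epsilon_n s}w_n'+\tilde\epsilon_n^2(V+\lambda_n)w_n=\rho_n w_n^{q-1}.
\]
Here $\tilde\epsilon_n^2(V+\lambda_n)\to\beta:=\lim \tilde\epsilon_n^2\lambda_n\in[0,1]$ along a subsequence, since $V$ is bounded.

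\emph{Sphere case \eqref{eq: blow up on sphere}.} Passing to a subsequence with $a_n/\tilde\epsilon_n\to\infty$, the drift coefficient tends to $0$ locally uniformly. Interior elliptic/ODE estimates then give convergence in $C^1_{loc}(\R)$ to a solution $U$ of $-U''+\beta U=U^{q-1}$ with $0<U\le U(0)=1$; positivity comes from the Harnack inequality, or from the fact that $U(0)=1$ together with the ODE. To rule out the branch $\beta>2/q$ (and $\beta=0$), I would use the Morse bound.

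\begin{itemize}
\item By Proposition \ref{lemma lagrange phase plane}, if $\beta>2/q$ the limit $U$ has infinite Morse index. One can then choose $m^*+1$ functions $\psi_j\in C_c^\infty(\R)$ with disjoint supports and $Q^1_0(\psi_j;U)<0$.
\item Transplant them as $\Psi_{j,n}(x)=\psi_j((|x|-a_n)/\tilde\epsilon_n)$. Since the volume element $r^{N-1}$ is essentially constant, equal to $a_n^{N-1}(1+o(1))$, on the support, we get
\[
Q_V^N(\Psi_{j,n};u_n)=a_n^{N-1}\tilde\epsilon_n^{-1}\bigl(Q^1_0(\psi_j;U)+o(1)\bigr)<0 .
\]
\item Disjoint supports give an $(m^*+1)$-dimensional negative subspace, contradicting $\morse(u_n)\le m^*$.
\end{itemize}

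Hence $\beta=2/q$, which is \eqref{eq:eps n ratio sphere}, and then $a_n/\epsilon_n\to\infty$ follows. Changing the scale from $\tilde\epsilon_n$ to $\epsilon_n$ is exactly the rescaling in Lemma \ref{2_lemma lagrange phase plane}, which gives (b). Uniqueness of $\phi$ holds because the ODE solution with given $\phi(0)$ and $\phi'(0)=0$ is unique. For (c), take $\phi$ times a cutoff in $s$: Lemma \ref{2_lemma lagrange phase plane} gives $Q^1_0(\phi;\phi)<0$, so a compactly supported truncation $\psi$ on $[-\bar R,\bar R]$ still has $Q_0^1<0$. The same transplant argument then yields \eqref{eq: morse sequence sphere} with support in $A(a_n,\bar R\epsilon_n)$.

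\emph{Origin case \eqref{eq: blow up origin}.} Here $a_n/\tilde\epsilon_n\to a\in[0,\infty)$. I would rescale in $\R^N$ instead, $W_n(y)=u_n(a_n)^{-1}u_n(\tilde\epsilon_n y)$. Because $R_n\to\infty$, the function $W_n$ is bounded by $1$ on balls $B(0,R_n-a_n/\tilde\epsilon_n)$ that exhaust $\R^N$. Elliptic estimates give $C^1_{loc}$ convergence to a radial solution $W$ of $-\Delta W+\beta W=W^{q-1}$ with $W\le1$ and $W=1$ on the sphere $|y|=a$.

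\begin{itemize}
\item The Morse bound passes to the limit by the same test-function transfer, now with the full $N$-dimensional scaling. So $W$ has finite Morse index, and $W$ is nontrivial.
\item If $\beta=0$, Liouville theorems for finite-Morse-index solutions of the Lane–Emden equation (Farina, subcritical $q$) contradict nontriviality. So $\beta>0$.
\item Lemma \ref{lemma: morse limit problem} then says $W$ is the unique positive ground state, which is radially decreasing. Its maximum is at the origin, forcing $a=0$, and this yields \eqref{eq: good ratio origin}.
\item The value $\tilde\lambda=\beta\in(0,1]$ is fixed by $W(0)=1$ and the scaling of the ground state, giving (a'). Rescaling by $\epsilon_n$ gives (b').
\end{itemize}

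Then (c') follows from Lemma \ref{lemma: morse limit problem} ii) by truncating a negative direction of $\phi$ and transplanting it. Finally, (d') follows from the change of variables
\[
\int_{B(0,a_n+R\epsilon_n)}u_n^s\,dx=\lambda_n^{\frac{s}{q-2}-\frac N2}\int_{B(0,a_n/\epsilon_n+R)}v_n^s\,dy,
\]
together with local uniform convergence and $a_n/\epsilon_n\to0$.

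The main obstacle is the passage of the Morse bound to the limit, especially when excluding $\beta>2/q$ on the sphere. That step requires checking that the lower-order terms, namely the drift and the Jacobian factor $(r/a_n)^{N-1}$, are uniformly negligible on supports of size $O(\tilde\epsilon_n)$. A second difficulty is the $\beta=0$ exclusion in the origin case, for which I would invoke Farina's classification.
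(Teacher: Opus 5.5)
Your proposal is correct and follows essentially the same route as the paper. You rescale by $u_n(a_n)$ at scale $\tilde\epsilon_n$ (one-dimensionally in the sphere case, $N$-dimensionally in the origin case) and pass the Morse bound to the limit. This forces $\tilde\lambda=2/q$ via Proposition \ref{lemma lagrange phase plane}, or $\tilde\lambda>0$ with the maximum at the origin via a Liouville theorem and Lemma \ref{lemma: morse limit problem}. You then rescale by $\epsilon_n$ and transplant a truncated negative direction of $\phi$. The differences are minor: you cite Farina's classification where the paper cites Bahri--Lions, and you use disjointly supported test functions (available by periodicity of $U$ when $\beta>2/q$), which makes the negativity of the transplanted span immediate rather than relying on linear independence plus uniform convergence of the quadratic forms.
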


\begin{proof}[Proof of Theorem \ref{mainblow}.]
The radial profiles of the solutions to Equation \eqref{L_EQ:bounded approximation} 
satisfy
\begin{equation*}
    \begin{cases}
         -u''(r)-\frac{N-1}{r}u'(r)+V(r)u(r)+\lambda u(r)=\rho u(r)^{q-1}, \quad &r \in (0, + \infty), \\
        u>0, \quad &r \in [0, + \infty), \\
				u'(0)=0.
    \end{cases}
\end{equation*}
We now treat the two possible cases separately.

{\it Proof under assumption \eqref{eq: blow up on sphere}.} To begin with, suppose that there holds $$\limsup_ {n\to\infty}\frac{a_n}{\tilde\epsilon_n}=+\infty.$$ Note that this includes, but is not restricted to, the case where up to subsequences $a_n\not\to 0$. 
Without loss of generality we may assume that 
\begin{equation*}
    R_n=o\left(\frac{a_n}{\tilde\epsilon_n}\right).
\end{equation*}
Consider then the rescaling
\begin{equation*}
        \tilde u_n(r)=\tilde\epsilon_n^{\frac{2}{q-2}}u_n(\tilde\epsilon_nr+a_n), \quad r\in (-R_n,R_n).
    \end{equation*}
  Observe that the functions $\tilde u_n(r)$ attain their maximum at $r=0$. A straightforward calculation then shows that each $\tilde u_n(r)$ satisfies
\begin{equation*}
    \begin{cases}
         \quad -\tilde u''(r)-(N-1)\frac{\tilde\epsilon_n}{\tilde\epsilon_nr+a_n}
         \tilde u_n'(r)+V(r)\tilde\epsilon^2_n\tilde u_n(r)+\lambda_n\tilde\epsilon_n^2 \tilde u(r)=\rho_n\tilde u_n(r)^{q-1}, &r\in(-R_n,R_n),\\
        \quad \tilde u_n(r)>0, & r\in(-R_n,R_n),\\
        \quad \tilde u_n(0)=1, &\\
        \quad \tilde u_n'(0)=0.
    \end{cases}
\end{equation*}
Note that the sequences $\left( \frac{\tilde\epsilon_n}{\tilde\epsilon_nr+a_n} \right)$ and  $\left(V(r)\tilde\epsilon^2_n \right)$ both tend to $0$ in $\R$ and $L^\infty(\R)$ respectively and also that
$$ 0 \leq \tilde\lambda :=\lim_{n\to\infty}\lambda_n\tilde\epsilon_n^2\leq 1.$$
Indeed, thanks to Lemma \ref{Lemma maxima bounds}, we have that 
$$0 < \tilde\epsilon_n^2 \lambda_n = {u_n(a_n)}^{-(q-2)} \lambda_n \leq \frac{\lambda_n}{\lambda_n + \underline{V}}\to 1.$$ 
Then, using standard elliptic estimates, see \cite{GiTr}, we can deduce, up to a subsequence, that $\tilde u_n\to \tilde u$ in $C^1_\loc(\R)$, where $\tilde u$ is a non-negative solution to
\begin{equation}\label{eq lame emden 1d}
    \left\{\begin{array}{lll}
    -\tilde u'' +\tilde\lambda\ \tilde u = \tilde u^{q-1}, \quad \text{in}\ \R,
    \end{array}\right.
\end{equation}
Let us now prove that the Morse index of $\tilde u\in H_\loc^1(\R)$, see Definition \ref{def Morse index in one dimension}, is bounded by $m^*$. Suppose by contradiction $\morse(\tilde u) > m^*$. Then there 
  exist linearly independent functions
  $\varphi_1, \dots, \varphi_m \in H^1(\R) \cap C_c^\infty(\R)$
  with $m > m^*$ such that    $Q_{0}^1(\varphi_i; \tilde u)<0$ for every $i \in \{1, \cdots, m\}$. Define the radial $N$-dimensional functions
\[
\varphi_{i,n}(r):= \tilde \eps_n^{\frac{1}{2}} \varphi_i\left(\frac{r-a_n}{\tilde \eps_n}\right).
\]
Note that
$\supp(\varphi_{i,n})\subseteq \{x\in \R^N\colon\ |x|\in [a_n-R_i\tilde\epsilon_n,a_n+R_i \tilde\epsilon_n]\}$
for some $R_i>0$ and thus $\varphi_{i,n}$ is well defined for $n \in \N$ sufficiently large thanks to \eqref{eq: blow up on sphere}. We can also check that for every fixed $n \in \N$ the functions $\varphi_{1,n},\dots,\varphi_{m,n}$ are linearly independent in $H^1_{\rad}(\R^N)$. Moreover, using the uniform convergence on compact sets we have
\begin{align*}
    \frac{\tilde\epsilon_n^{N-1}}{a_n^{N-1}}&Q_V^N(\varphi_{i,n};u_n)\\
    &=\frac{\tilde\epsilon_n^{N-1}}{a_n^{N-1}}\int_{\R^N} |{\nabla\varphi_{i,n}(x)}|^2 + (V(x)+\lambda_n - \rho_n(q-1)|u_n(x)|^{q-2}
)\varphi^2_{i,n}(x)   dx\\
&=\frac{\tilde\epsilon_n^{N-1}}{a_n^{N-1}}\int_{a_n-R_i\tilde\epsilon_n}^{a_n+R_i\tilde\epsilon_n}r^{N-1}\left[|{\varphi'_{i,n}(r)}|^2 + (V(r)+\lambda_n - \rho_n(q-1)|u_n(r)|^{q-2}
)\varphi^2_{i,n}(r)\right]dr\\
&=\int_{\supp \varphi_i}\left(\frac{\tilde\epsilon_n s+a_n}{a_n}\right)^{N-1}\left[\left|{\varphi'_{i}\left(s\right)}\right|^2 + (\tilde\epsilon_n^2V(\tilde\epsilon_n s+a_n)+\tilde\epsilon_n^2\lambda_n - \rho_n(q-1)|\tilde u_n(s)|^{q-2}
)\varphi^2_{i}\left(s\right)\right]ds\\
&\to\int_{\supp\varphi_i}\left[|{\varphi'_{i}(s)}|^2 + \left(\tilde\lambda- (q-1)|\tilde u(s)|^{q-2}
\right)\varphi^2_{i}(s)\right]ds\\
&=Q_0^1(\varphi_i;\tilde u)<0.
\end{align*}
Since, by \eqref{eq: blow up on sphere}, $(\tilde\epsilon_n/a_n)^{N-1}\to 0^+$, this implies that $Q_V^N(\varphi_{i,n};u_n)<0$. This contradicts the assumption that $\morse(u_n) \leq m^*$ and thus $\morse(\tilde u) \leq m^*$ holds.
At this point, applying Proposition \ref{lemma lagrange phase plane} we deduce that $$\tilde \lambda=\frac{2}{q},$$
proving \eqref{eq:eps n ratio sphere}. This, together with \eqref{eq: blow up on sphere} results in \eqref{eq: maxima ration shpere}, proving (a).

 Consider now the alternative re-scaling, which will prove more convenient later
\begin{equation*}
        v_n(s)=\lambda_n^{-\frac{1}{q-2}}u_n\left(\frac{s}{\sqrt{\lambda_n}}+a_n\right), \quad s\in (-\lambda_n^{\frac12}{R_n}{\tilde\epsilon_n},\lambda_n^{\frac12}{R_n}{\tilde\epsilon_n}),
    \end{equation*}
    which satisfies 
    $$v_n(0)=\lambda_n^{-\frac{1}{q-2}}u_n(a_n)=
    \lambda_n^{-\frac{1}{q-2}}\max_{r\in A(a_n,{R_n}{\tilde\epsilon_n)}}u_n(r).$$
		Following the same steps we can prove that, 
up to a subsequence, the sequence $\{v_n\}$ converges in $C_{\mathrm{loc}}^1(\R)$ to a function $\phi>0$ satisfying
    \begin{equation*}
		\left\{\begin{array}{lll}
    -\phi'' +  \phi=  \phi^{q-1}, \quad\quad \text{in}\quad \R    \\
    \phi\leq \phi(0)=\left(\frac2{q}\right)^{-\frac{1}{q-2}}.
  \end{array}
        \right.
    \end{equation*}
Using Lemma \ref{2_lemma lagrange phase plane} we deduce $\phi \in H^1(\R)$ by identification, and thus (b) holds, Now, (c) is also consequence of Lemma \ref{2_lemma lagrange phase plane}. Indeed, let $\varphi\in C_c^\infty(\R)$ be a function such that $Q_0^1(\varphi;\phi)<0$ with which we define the $N$-dimensional radial function 
$$\varphi_{n}(x):=  \eps_n^{\frac{1}{2}} \varphi\left(\frac{|x|-a_n}{ \eps_n}\right).$$
Observe that $\supp (\varphi_n)\subset A(a_n,\bar R \epsilon_n)$ for some fixed $\bar R$ which depends on the support of $\varphi$. Furthermore,
\begin{align*}
    \frac{\epsilon_n^{N-1}}{a_n^{N-1}}Q_V^N(\varphi_n;u_n)&=\frac{\epsilon_n^{N-1}}{a_n^{N-1}}
    \int_{\R^N}\left[|\nabla\varphi_n(x)|^2+(V(x)+\lambda-\rho_n(q-1)u_n(x)^{q-2})\varphi_n^2(x)\right]dx\\
&=\int_{\R}\left(\frac{\epsilon_ns+a_n}{a_n}\right)^{N-1}\left[\left|{\varphi'\left(s\right)}\right|^2 + (\epsilon_n^2V(\epsilon_ns+a_n)+\epsilon_n^2\lambda_n - \rho_n(q-1)|v_n(s)|^{q-2}
)\varphi^2\left(s\right)\right]ds\\
&\to\int_{\R}\left[|{\varphi'(s)}|^2 + \left(1-(q-1)|\phi|^{q-2}
\right)\varphi^2(s)\right]ds\\
&=Q_0^1(\varphi;\phi)<0,
\end{align*}
and so the statement follows.

{\it Proof under assumption \eqref{eq: blow up origin}.} Let us now turn to the case $$\eta:=\limsup_{n\to\infty}\frac{a_n}{\tilde \epsilon_n}\in[0,+\infty).$$
Since $R_n\to+\infty$, for $n \in \N$ sufficiently large, we have that 
\begin{align*}
    u_n(a_n)&=\max\{u_n(r) \colon\ \max\{0,a_n-R_n\tilde\epsilon_n\}< r< a_n+R_n\tilde\epsilon_n\}=\max\{u_n(r) \colon\ r< a_n+R_n\tilde\epsilon_n\}\\
    &=\max\{u_n(r)\colon\ r\in B(0,a_n+R_n\tilde\epsilon_n)\}.
\end{align*}
This time we begin by considering the rescaling 
$$\tilde u_n(r)=\tilde\epsilon_n^{\frac{2}{q-2}}u_n(\tilde\epsilon_nr), \quad r\in [0,{a_n+R_n\tilde\epsilon_n}),$$
whose associated equation is
$$\begin{cases}
         \quad -\tilde u_n''(r)-\frac{N-1}{r}\tilde u_n'(r)+V(r)\tilde\epsilon^2_n\tilde u_n(r)+\lambda_n\tilde\epsilon_n^2 \tilde u(r)=\rho_n\tilde u_n(r)^{q-1}, &r\in[0,{a_n+R_n\tilde\epsilon_n}),\\
        \quad \tilde u_n(r)>0, & r\in[0,{a_n+R_n\tilde\epsilon_n}), \\
        \quad \tilde u_n'(0)=0.
    \end{cases}$$
For every $a>\eta+1$, and $n \in \N$ sufficiently large, there holds by definition,
$$\tilde u_n\left(\frac{a_n}{\tilde\epsilon_n}\right)=1=\max_{r<a}  \tilde u_n(r).$$
Here again we have that $\tilde\lambda=\lim_{n\to\infty}\lambda_n\tilde\epsilon_n^2=\lim_{n\to\infty}\epsilon_n^{-2}\tilde\epsilon_n^2\in[0,1]$ again thanks to Lemma \ref{Lemma maxima bounds}.
     Then, we may take the limit to deduce that, up to subsequence, there exists $\tilde u$ such that $\tilde u_n\to \tilde u$ in $C_\loc^1([0,+\infty))$ which is a solution to 
    $$\left\{\begin{array}{lll}
    -\tilde u''-\frac{N-1}{r}\tilde u'+\tilde\lambda\ \tilde u = \tilde u^{q-1}, \quad \text{in}\ [0,+\infty)
    \\
    \tilde u\geq 0,\\
    {\tilde u'(0)} = 0.
    \end{array}\right.$$
    In addition, by local uniform convergence we have
    $$\tilde u(\eta)=\lim_{n\to\infty}\tilde u_n\left(\frac{a_n}{\tilde\epsilon_n}\right)=1.$$
Contrary to the previous case, observe that this time we may recover an $N$-dimensional problem by undoing the change of variables: viewing $\tilde u$ as a function in $\R^N$ we obtain a nontrivial radial solution to the problem 
\begin{equation}\label{eq:limit blow up}\left\{\begin{array}{lll}
    -\Delta  \tilde u +  \tilde\lambda\tilde u=  \tilde u^{q-1}, \quad \text{in}\quad \R^N  \\\tilde u \geq 0. 
    \end{array}
        \right.
    \end{equation}
As in the previous case we may prove that the Morse index of the function $\tilde u$ is less then $m^*$. Indeed, suppose by contradiction $\morse(\tilde u) > m^*$. Then there would
  exist linearly independent functions
  $\varphi_1, \dots, \varphi_m \in H^1_\rad(\R^N) \cap C_c^\infty(\R^N)$
  with $m > m^*$ such that    $Q^N_0(\varphi_i; \tilde u)<0$ for every $i \in \{1, \cdots, m\}$. Define the $N$-dimensional functions
\[
\varphi_{i,n}(x):= \tilde \eps_n^{\frac{1}{2}} \varphi_i\left(\frac{x}{\tilde \eps_n}\right).
\]
For every fixed $n$, the functions $\varphi_{1,n},\dots,\varphi_{m,n}$ are linearly independent in $H^1_{\rad}(\R^N)$ and $\supp(\varphi_{i,n})\subseteq B(0,R_i\tilde\epsilon_n)$ where each $R_i$ depends only on the support of $\varphi_i$. In all, using the uniform convergence on compact sets we have
\begin{align*}
    \tilde\epsilon_n^{N-1}Q_V^N(\varphi_{i,n};u_n)
    &=\tilde\epsilon_n^{N-1}\int_{\R^N} |{\nabla\varphi_{i,n}(x)}|^2 + (V(x)+\lambda_n - \rho_n(q-1)|u_n(x)|^{q-2}
)\varphi^2_{i,n}(x)   dx\\
&=\tilde\epsilon_n^{N-1}\int_{B(0,R_i\tilde\epsilon_n)}|{\nabla\varphi_{i,n}(x)}|^2 + (V(x)+\lambda_n - \rho_n(q-1)|u_n(x)|^{q-2}
)\varphi^2_{i,n}(x)dx\\
&=\int_{\supp(\varphi_i)}\left[\left|{\nabla\varphi_{i}\left(y\right)}\right|^2 + (\tilde\epsilon_n^2V(\tilde\epsilon_n y+a_n)+\tilde\epsilon_n^2\lambda_n - \rho_n(q-1)|\tilde u_n(y)|^{q-2}
)\varphi^2_{i}\left(y\right)\right]dy\\
&\to\int_{\supp(\varphi_i)}\left[|{\nabla\varphi_{i}(y)}|^2 + \left(\tilde\lambda- (q-1)|\tilde u(y)|^{q-2}
\right)\varphi^2_{i}(y)\right]dy\\
&=Q_0^N(\varphi_i;\tilde u)<0.
\end{align*}

Since $\tilde\epsilon_n^{N-1}\to 0$ we have $Q_V^N(\varphi_{i,n};u_n)<0$. Once again this contradicts the assumption that $\morse(u_n) \leq m^*$ and thus $\morse(\tilde u) \leq m^*$. Recalling that solutions to \eqref{eq:limit blow up} with finite Morse index and $\tilde\lambda=0$ satisfy $u\equiv 0$, see \cite[Theorem 3]{BaLi},  this yields that $\tilde\lambda>0$, proving \eqref{eq:eps n ratio origin}. At this point and since $\tilde u$ is radial, by \cite[Theorem 2]{GiNiNi81} we deduce that, $\tilde{u}$ is decreasing with respect to $|x|.$ This implies that its maximum is reached at the origin, namely that $\eta =0$. Thus \eqref{eq: good ratio origin} holds. This finishes (a').

Consider now the alternative rescaling
\begin{equation}\label{eq:good rescaling2}
    v_n(y)=\lambda_n^{-\frac{1}{q-2}}u_n\left(\frac{y}{\sqrt{\lambda_n}}\right), \quad y\in \R^N.
\end{equation}
    This time we obtain, up to subsequences, that $\{v_n\}$ converges in $C_{\mathrm{loc}}^1(\R^N)$ to a non-negative function $\phi$ satisfying
    \begin{equation}\label{Eq scaled convergence2}\left\{\begin{array}{lll}
    -\Delta  \phi +  \phi=  \phi^{q-1}, \quad \text{in}\quad \R^N    \\
    {\phi(\eta)=\phi(0)=\tilde\lambda^{-\frac{1}{q-2}}}.
    \end{array}
        \right.
    \end{equation}
		As before we can prove that $\phi$ has a finite Morse index. {In particular, $\phi$ is stable outside a compact set and thus $\phi\in H^1(\R^N)$}. Since from (a') there holds $\tilde\lambda\neq 0$, $\phi$ cannot be the trivial solution and in view of Lemma \ref{lemma: morse limit problem} (i) we have proven (b'). Turning to (c'), as a consequence of Lemma \ref{lemma: morse limit problem} (ii), there exists a radial function $\varphi\in C_c^\infty(\R^N)$ such that $Q_0^N(\varphi;\phi)<0$. Then setting
$$\varphi_{n}(x):=  \eps_n^{\frac{1}{2}} \varphi\left(\frac{x}{ \eps_n}\right)$$
we have that
\begin{align*}
    \epsilon_n^{N-1}Q_V^N(\varphi_n;u_n)
    &=\epsilon_n^{N-1}\int_{\R^N}\left[|\nabla\varphi_n(x)|^2+(V(x)+\lambda-\rho_n(q-1)u_n(x)^{q-2})\varphi_n^2(x)\right]dx
    \\
&=\int_{\R^N}\left[\left|{\nabla\varphi\left(y\right)}\right|^2 + (\epsilon_n^2V(\epsilon_n y+a_n)+\epsilon_n^2\lambda_n - \rho_n(q-1)|v_n(y)|^{q-2}
)\varphi^2\left(y\right)\right]dy\\
&\to\int_{\R^N}\left[|{\nabla\varphi(y)}|^2 + \left(1- (q-1)|\phi|^{q-2}
\right)\varphi^2(y)\right]dy\\
&=Q_0^N(\varphi;\phi)<0.
\end{align*}
Since $\supp (\varphi_n)\subset B(0, a_n + \bar R\tilde\epsilon_n)$ for some fixed $\bar R$ which depends only the support of $\phi$, the statement follows. 
Finally, by local uniform convergence, for every fixed $R>0$ and $s\geq 2$, we have 
$$\lim_{n\to\infty} \lambda_n^{\frac{N}{2}-\frac{s}{q-2}}\int_{B(0, a_n + R\lambda_n^{-1/2})}|u_n(x)|^sdx
=\lim_{n\to\infty}\int_{B(0, a_n \lambda_n^{1/2} +R)}|v_n(y)|^s dy =\int_{B(0,R)}|\phi|^s dy,$$

which proves (d') and ends the proof.

\end{proof}

In this second theorem, considering sequences of solutions to \eqref{L_EQ:bounded approximation} as in Theorem \ref{mainblow}, we show that there exists a finite number of local minima and associated neighbourhoods away from which one has an exponential decay.

\begin{theorem}\label{theo sec blow}
Let $(V0)-(V1)$ hold and $\{(\rho_n,\lambda_n,u_n)\}\in[\frac12,1]\times\R\times H_\rad^1(\R^N)$ be a sequence of solutions to  \eqref{L_EQ:bounded approximation}. Suppose $\rho_n\to 1^-$ and $\lambda_n\to+\infty$. Suppose moreover that the Morse index of the sequence $\{u_n\}$ is uniformly bounded, i.e. there exists $m^*<+\infty$ such that,
$$
\sup_{n\in \N} \morse(u_n) \leq m^*.
$$
\begin{itemize}
    \item [(a)] There exist $M$ sequences $\{a_n^i\} \subset [0, + \infty),$ $i \in \{1, \cdots, M \}$ with $M\le m^*$ such that 
\begin{itemize}
    \item [i)]$\forall i \ne j$ there holds 
    \begin{equation*}
        \lambda_n^{1/2} |a^i_n - a^j_n| \to +\infty ;
    \end{equation*}
  \item[ii)] for every $i\in \{1, \cdots, M \}$ there exists an associated sequence $R_n^i$ such that, $R_n^i\to +\infty$ as $n\to \infty$  and
	\begin{equation}\label{LL1}
 u_n(a_n^i)=\max_{|r-a_n^i|<R_n^i\lambda_n^{-\frac12}}u_n(r);
\end{equation}
    \item [iii)] $$\lim_{R\to\infty}\left(\limsup_{n\to\infty}\left[\lambda_n^{-\frac{1}{q-2}}\sup_{d_{n}(r)>R\lambda_n^{-\frac{1}{2}}}u_n(r)\right]\right)=0;$$
		where
		\begin{equation*}
		 d_n(r) := \min_{1\le i \le M} |r-a^i_n|.
		\end{equation*}
\end{itemize}
\item[(b)] There exists $\gamma >0$ such that for all $\epsilon>0$ small enough, there exist $N_\epsilon$ and $R_\epsilon$ such that if $n\geq N_\epsilon$ and $R\geq R_\epsilon$ there holds
\begin{equation}\label{eq far exponential decay}
    u_n(r)\leq\epsilon\lambda_n^{\frac{1}{q-2}}e^{\gamma R}\sum_{i=1}^Me^{-\gamma\lambda_n^\frac12|r-a_n^{i}|}\quad \text{in}\quad \mathcal{A}_{R,n},
\end{equation}
where \begin{equation}\label{eq set Arn}
    \mathcal{A}_{R,n} := \{r \in \R_{\geq 0} \colon d_n(r) \ge R \lambda_n^{-1/2} \}.
\end{equation}

  \end{itemize}
\end{theorem}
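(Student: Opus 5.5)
For part (a) I will run the iterative selection procedure of \cite{EsPe11,EMSS}, using Theorem \ref{mainblow} at each step. First take $a_n^1$ to be a global maximum point of $u_n$. It exists because $u_n$ is radial, positive, continuous and tends to $0$ at infinity. By Lemma \ref{Lemma maxima bounds}, $\tilde\epsilon_n\to0$, so Theorem \ref{mainblow} applies with any $R_n\to\infty$. It gives the profile convergence and a test function $\phi_n^1$ with $Q_V^N(\phi_n^1;u_n)<0$, supported in $A(a_n^1,\bar R\epsilon_n)$. The comparison $\tilde\epsilon_n\sim\epsilon_n$ from (a)/(a') lets me write $R_n^1$ on the $\lambda_n^{-1/2}$ scale.

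Inductively, suppose $a_n^1,\dots,a_n^k$ have been built and satisfy i)--ii), but iii) fails. Then there are $\delta>0$ and $R_k\to\infty$ (after diagonalizing over subsequences) such that $\sup_{d_n(r)>R_k\lambda_n^{-1/2}}\lambda_n^{-1/(q-2)}u_n(r)\geq\delta$. I take $a_n^{k+1}$ to be a maximum point of $u_n$ on $\{d_n\ge R_k\lambda_n^{-1/2}\}$. This maximum is attained because $u_n\to0$ at infinity. Since $u_n(a_n^{k+1})\gtrsim\lambda_n^{1/(q-2)}$, we get $\tilde\epsilon_n\lesssim\epsilon_n$.

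The delicate point is ii): I must show $a_n^{k+1}$ is a local maximum on a window of size $R_n^{k+1}\lambda_n^{-1/2}\to\infty$. The maximum could sit on the boundary of the excluded region, i.e.\ at distance about $R_k\lambda_n^{-1/2}$ from some $a_n^i$. I would exclude this using the profile convergence near $a_n^i$ from Theorem \ref{mainblow}(b)/(b'). The limit profiles decay at infinity, so on that boundary $\lambda_n^{-1/(q-2)}u_n$ is $o(1)$ as $R_k\to\infty$, which contradicts the lower bound $\delta$. Hence $\lambda_n^{1/2}d_n(a_n^{k+1})\to\infty$. I then choose $R_n^{k+1}$ growing slowly enough that the window around $a_n^{k+1}$ stays inside the region where $a_n^{k+1}$ is the maximum, and Theorem \ref{mainblow} applies to $a_n^{k+1}$.

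Property i) gives disjoint supports for $\phi_n^1,\dots,\phi_n^{k+1}$. At most one $a_n^i$ can be of origin type, since any two such points are within $O(\epsilon_n)$ of each other. The form $Q_V^N(\cdot;u_n)$ is negative on a $(k+1)$-dimensional space spanned by disjointly supported functions, so $k+1\le m^*$. The procedure therefore stops at some $M\le m^*$, and iii) holds.

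For part (b) I will use a comparison argument on $\mathcal{A}_{R,n}$. By iii), for $R\ge R_\epsilon$ and $n\ge N_\epsilon$ we have $\rho_nu_n^{q-2}\le\epsilon^{q-2}\lambda_n\le\lambda_n/2$ there. So $u_n$ is a subsolution of $-\Delta w+(V+\lambda_n/2)w\le0$, and since $V\ge\underline V$, also of $-\Delta w+(\lambda_n/4)w\le0$ for $n$ large. As a supersolution I use $\psi_n=\epsilon\lambda_n^{1/(q-2)}e^{\gamma R}\sum_ie^{-\gamma\lambda_n^{1/2}|r-a_n^i|}$ with $\gamma$ small. The radial Laplacian contributes $\gamma^2\lambda_n$ and the drift term $\pm\frac{N-1}{r}\gamma\lambda_n^{1/2}$. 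The drift term has a bad sign only when the exponent is decreasing in $r$; there its sign is favorable, except on the side toward the origin for the origin component. I can handle that part by using $e^{-\gamma\lambda_n^{1/2}|x|}$ directly in $\R^N$, where $\Delta$ of it has the good sign for $\gamma^2<1/4$. The kinks at the $a_n^i$ lie outside $\mathcal{A}_{R,n}$.

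On $\partial\mathcal{A}_{R,n}$, iii) gives $u_n\le\epsilon\lambda_n^{1/(q-2)}\le\psi_n$. At infinity both functions vanish. The maximum principle on each component of $\mathcal{A}_{R,n}$ then gives \eqref{eq far exponential decay}.

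I expect the main obstacle to be the selection step, specifically guaranteeing ii) for the new point: a maximum on the exterior region must not sit on the boundary of a previous bubble. This needs uniform control of the profile tails near $a_n^i$ in the sphere case, where the $\frac{N-1}{r}$ drift interacts with possible accumulation of the $a_n^i$ near the origin.
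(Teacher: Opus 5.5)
Your part (a) is essentially the paper's Steps 1--2. You select a maximum on the exterior set and use the local profile convergence of Theorem \ref{mainblow} to push it away from the earlier points. The disjointly supported negative directions from (c)/(c') then give $M\le m^*$. The uniform tail control you flag as the main obstacle is not needed. The paper fixes $R$ so large that the finitely many limit profiles $\phi^j$ are below $\delta$ at rescaled distance $\geq R$, and takes $a_n^{k+1}$ to be the maximum of $u_n$ on $\{d_{n,k}\geq R\lambda_n^{-1/2}\}$. If $\lambda_n^{1/2}d_{n,k}(a_n^{k+1})$ stayed bounded, $C^1_{\mathrm{loc}}$ convergence alone would give $\phi^j\geq 2\delta$ at some rescaled distance $\geq R$, which is impossible. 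Hence $R_n^{k+1}:=\frac12\lambda_n^{1/2}d_{n,k}(a_n^{k+1})\to\infty$ works.

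The genuine gap is in (b), in the drift term. For $\varphi=e^{-\gamma\lambda_n^{1/2}|r-a|}$ one has
\[
-\Delta\varphi=\Bigl(-\gamma^2\lambda_n+\frac{N-1}{r}\,\gamma\lambda_n^{1/2}\,\mathrm{sgn}(r-a)\Bigr)\varphi .
\]
So the drift is favorable on the outer side $r>a$ and unfavorable on the inner side $r<a$. There it is bounded below by $-\frac{N-1}{R}\gamma\lambda_n\varphi$ only if $r\geq R\lambda_n^{-1/2}$. That holds on every component of $\mathcal{A}_{R,n}$ except the ball $\{r\leq a_n^1-R\lambda_n^{-1/2}\}$. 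This ball is present exactly when the innermost point is of sphere type, i.e. $\lambda_n^{1/2}a_n^1\to\infty$.

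On that ball every term of $\psi_n$ is a constant times $e^{\gamma\lambda_n^{1/2}|x|}$. Hence $(-\Delta+\frac{\lambda_n}{4})\psi_n<0$ on a ball of radius comparable to $\lambda_n^{-1/2}$ around the origin, and the comparison principle cannot be applied on that component.

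Your patch is aimed at the wrong place. An origin-type point has $\lambda_n^{1/2}a_n\to0$, so for large $R$ it has no inner side inside $\mathcal{A}_{R,n}$. In the problematic case there is no origin bubble, so $e^{-\gamma\lambda_n^{1/2}|x|}$ is not a term of the sum, and adding it would destroy the claimed smallness near $0$. The paper's Step 3 does not settle this either. It writes the drift as $-\frac{N-1}{r}\frac{r-a_n^i}{|r-a_n^i|}\gamma\lambda_n^{1/2}\varphi_n^i$, with the opposite sign. It therefore declares the inner side harmless and spends the bound $\frac{N-1}{R}\leq\frac12$ on the outer side, which needs none.

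A simple repair on the ball component is to compare $u_n$ with
\[
W_n(x)=\epsilon\lambda_n^{\frac{1}{q-2}}\,\frac{\cosh(\gamma\lambda_n^{1/2}|x|)}{\cosh(\gamma\lambda_n^{1/2}A_n)},\qquad A_n:=a_n^1-R\lambda_n^{-1/2}.
\]
$W_n$ is smooth. Since $\sinh t\leq t\cosh t$, one gets $\Delta W_n\leq N\gamma^2\lambda_n W_n$, hence $(-\Delta+\frac{\lambda_n}{4})W_n\geq 0$ for $\gamma^2\leq\frac{1}{4N}$. On $|x|=A_n$ we have $W_n=\epsilon\lambda_n^{\frac{1}{q-2}}\geq u_n$. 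Moreover $W_n\leq 2\epsilon\lambda_n^{\frac{1}{q-2}}e^{\gamma R}e^{-\gamma\lambda_n^{1/2}(a_n^1-r)}$, which gives \eqref{eq far exponential decay} on the ball with $2\epsilon$ in place of $\epsilon$. On all other components $r\geq R\lambda_n^{-1/2}$, and your exponential sum is a supersolution once $\gamma$ is small and $R$ is large.
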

\begin{remark}
Thanks to elliptic regularity estimates (c.f. \cite{GiTr}, \cite[Section 5]{EsPe11}), we may also derive a similar bound on the derivatives
\begin{equation}\label{eq far exponential decay derivative}
    u_n'(r)\leq\epsilon\lambda_n^{\frac12+\frac{1}{q-2}}e^{\gamma R}\sum_{i=1}^Me^{-\gamma\lambda_n^\frac12|r-a_n^{i}|}\quad \text{in}\quad \mathcal{A}_{R,n},
\end{equation}
\end{remark}

\begin{remark}
If one closely follows the Step 3 of the proof below, possibly by making $\epsilon$ smaller and $N_\epsilon, R_\epsilon$ larger, one can obtain estimate \eqref{eq far exponential decay} for every $0<\gamma<1$. However, it does not seem possible to recover an
estimate with $\gamma=1$.
\end{remark}

\begin{remark}\label{rem blow up cases}
For a sequence of local maxima $\{a_n^i\} \subset [0, + \infty)$ as considered in Theorem \ref{mainblow} we had to distinguish whether, up to subsequences,  $\lim_{n\to\infty}{a_n^i}\lambda_n^{1/2}$ was finite or not. Theorem \ref{theo sec blow} shows that it is possible to assume that at most one such sequence $\{a_n^i\}$ satisfies $\lim_{n\to\infty}{a_n^i}\lambda_n^{1/2} < \infty$ or, equivalently, satisfies \eqref{eq: blow up origin}. 
\end{remark}
\begin{proof}
To prove part (a), begin by observing that the sequence $\{a_n^1\}$ formed by the global maxima of $u_n$ satisfies \eqref{LL1}. If iii) is also satisfied then taking $M=1$ the claim is proved. If not, we apply the following iterative procedure.

{\it Step 1:
Suppose that for a given $k\in \N$ we have the existence of maxima $a_n^1,\dots,a_n^k$ such that i) and ii) holds but iii) does not. Then there exists an additional  sequence $\{a_n^{k+1}\}$ satisfying i) and ii).}

By assumption there exists $\delta>0$ such that 
$$\limsup_{R\to\infty}\left( \limsup_{n\to+\infty}\left[\lambda_n^{-\frac{1}{q-2}}\max_{d_{n,k}(r)>R\lambda_n^{-\frac{1}{2}}}u_n(r)\right]\right)>4\delta,\quad \mbox{where}\quad d_{n,k}(r) := \min_{1\le i \le k} |r-a_n^i|.$$ 
Then, up to passing to a subsequence, we can fix a $R>0$ as large as we want, so that
    \begin{equation}\label{Eqxn1contrad}
        \lambda_n^{-\frac{1}{q-2}}\sup_{d_{n,k}(r)>R\lambda_n^{-\frac{1}{2}}}u_n(r)\geq 2\delta.
    \end{equation} 
Thanks to \eqref{Eqxn1contrad} and since by the radial nature of the functions {$u_n\in H^1_\rad(\R^N)$} we have $u_n(r)\to 0$ as $r \to\infty$ (see for example \cite[Appendix]{BeLi1}) a sequence of local maximum points 
$\{a_n^{k+1}\}$ must be achieved in the {set} 
$${\mathcal{A}_{k,n}}:=\left\{r\in \R_{\geq 0}\colon \ \min_{1\leq i\leq k}|r-a_n^i|\geq R\lambda_n^{-\frac12}\ \right\}.$$ We claim that the sequence $\{a_n^{k+1}\}$ satisfies i) and ii). To begin with, suppose for contradiction that for some $j\in \{1,\cdots, k \}$ there holds
    $$\lambda_n^{\frac{1}{2}}|a_n^{k+1}-a_n^{j}|
    \to R'\geq R.$$
    We distinguish whether, up to subsequences, $\lambda_n^{\frac12}a_n^j\to +\infty$ or $\lambda_n^{\frac12}a_n^j\to \mu<+\infty.$
    In the former case, let  
    $b_n^{k+1}:=\lambda_n^{\frac12}(a_n^{k+1}-a_n^{j})$ which, up to a subsequence, converges, to a point $\hat b^{k+1}\in \R$ such that $|\hat b^{k+1}|=R'\geq R.$  Now define the scaled unidimensional sequence 
    $$v_n^{j}(s)=\lambda_n^{-\frac{1}{q-2}}u_n\left(\frac{s}{\sqrt{\lambda_n}}+a_n^j\right), \quad s\in{ \left(-R_n^j,R_n^j\right)}, $$ 
which converges, thanks to Theorem \ref{mainblow}, in $C_{\mathrm{loc}}^1(\R)$ to a function ${\phi^{j}\in H^1(\R)}$ which is a solution to \eqref{Eq scaled convergence sphere}. Moreover we have
\begin{equation}\label{eq pointwise sphere}
    2\delta\leq \lambda_n^{-\frac{1}{q    -2}}u_n(a_n^{k+1})=v_n^{j}\left(\lambda_n^\frac12\left(a_n^{k+1}-a_n^{j}\right)\right)=v_n^{j}(b_n^{k+1})\to \phi^{j}(\hat b^{k+1}).
\end{equation}
    In the latter case, the sequence 
    $b_n^{k+1}:=\lambda_n^{\frac12}(a_n^{k+1}-a_n^{j})>0$ which, up to having chosen $R > \mu$, directly converges to $R'$. Analogously, 
    define the radial scaled sequence
    $$v_n^j(y)=\lambda_n^{-\frac{1}{q-2}}u_n\left(\frac{y}{\sqrt{\lambda_n}}\right), \quad y\in B\left(0,{R_n^j}\right)$$
    which converges in $C_{\mathrm{loc}}^1(\R^N)$ to a function ${\phi^{j}\in H^1(\R^N)}$ which is a solution to \eqref{Eq scaled convergence origin}. This time there holds 
    \begin{equation}\label{eq pointwise origin}
    2\delta\leq \lambda_n^{-\frac{1}{q    -2}}u_n(a_n^{k+1})=v_n^{j}\left(\lambda_n^\frac12{a_n^{k+1}}\right)=v_n^{j}(b_n^{k+1}+{\lambda_n^{\frac12}}a_n^j)\to \phi^{j}(R'+{\mu}).
    \end{equation}
By choosing $R>0$ sufficiently large so that {$|\phi^j(x)|<\delta$ for $|x|\geq R$}  we obtain a contradiction either to \eqref{eq pointwise sphere} or to \eqref{eq pointwise origin}. Such contradiction arises from the fact that, referring to Lemma \ref{lemma: morse limit problem} (i) or to Lemma \ref{2_lemma lagrange phase plane}, 
in both cases $\phi^j$ decays to $0$ at infinity. In conclusion, the newly constructed sequence $\{a_n^{k+1}\}$ satisfies i). 

In order to prove ii) we define
$$R_n^{k+1}:=\frac12\lambda_n^{\frac{1}{2}}{d_{n,k}(a_n^{k+1})}.$$ Note that $R_n^{k+1}\to \infty$ as $n\to\infty$ thanks to i).
Since from the construction performed in i), $\{a_n^{k+1}\}$ is a sequence of local maxima lying in $\mathcal{A}_{k,n},$ in order to conclude it suffices to prove that, for $n$ sufficiently large,
\begin{equation}\label{eq new sequence local max}
    \left\{r\in \R_{\geq 0}\colon\ |r-a_n^{k+1}|<R_n^{k+1}\lambda_n^{-\frac12}\right\}\subseteq {\mathcal A_{k,n}}.
\end{equation}
This is equivalent to showing that $|r-a_n^{k+1}|<R_n^{k+1}\lambda_n^{-\frac12}$ implies $|r-a_n^{i}|\geq R\lambda_n^{-\frac12}$  for every $i\in [1,\dots, k]$. Using that $d_{n,k}(a_n^{k+1})\leq |a_n^{k+1}-a_n^j|$ for every $j\in [1,\dots, k]$ and the definition of $R_n^{k+1}$ we obtain, taking $n$ sufficiently large
\begin{align*}
    |a_n^i-r|\geq |a_n^{i}-a_n^{k+1}|-|a_n^{k+1}-r|\geq 2R_n^{k+1}\lambda_n^{-\frac12}-R_n^{k+1}\lambda_n^{-\frac12}=R_n^{k+1}\lambda_n^{-\frac12}\geq R\lambda_n^{-\frac12}.
\end{align*}
As a result, \eqref{eq new sequence local max} is proven and so is ii).

To sum up, Step 1 gives an iterative process which constructs a new sequence of local maxima whenever iii) is not satisfied. If with this new sequence iii) is satisfied we are done. If not we reapply Step 1. The next step is dedicated to showing that this iterative process must stop if we already have the existence of $m^*$ sequences.

{\it Step 2: If there already exist $a_n^1,\dots,a_n^{m^*}$ sequences satisfying i) and ii), then iii) holds.} 

    Suppose for contradiction iii) does not hold. Following the arguments of Step 1 we deduce the existence of an $(m^*+1)$-th sequence  $a_n^{m^*+1}$ 
    satisfying i) and ii). As such, from the iterative procedure we have $(m^*+1)$ sequences $\{a_n^{i}\}_{i=1}^{m^*+1}$ satisfying i) and ii) and thus each of them fall under the hypothesis of Theorem \ref{mainblow}. As a consequence, using conclusions (c) or (c') depending on the case, more specifically either \eqref{eq: morse sequence sphere} or \eqref{eq:morse sequence origin}, we obtain $(m^*+1)$ sequences $\{\phi_n^i\}_{i=1}^{m^*+1}$ in $C_c^\infty(\R^N)$ satisfying $$\supp(\phi_n^i)\subseteq A(a_n^i,\bar R^i\lambda_n^{-\frac12})$$
    for some $\bar R^i>0$ and such that 
    $$Q_V^N(\phi_n^i,u_n)<0, \quad \text{for all}\ i\in \{1,\cdots,m^*+1\}.$$
Using now that the sequences $\{a_n^i\}_{i=1}^{m^*+1}$ satisfy i), we can make the supports of the $\phi_n^i$ disjoint between themselves. In turn this would imply that $m(u_n)\geq m^*+1$ which is a contradiction. The proof of (a) is now complete.

\textit{Step 3: Proof of (b)}. From a) iii) we have that, given $\epsilon>0$ there exists $R_\epsilon$ such that if $R\geq R_\epsilon$ then
    \begin{equation}\label{Eqbound in Anr}
        \max_{r\in \mathcal{A}_{R,n}}u_n(r)\leq \frac14\epsilon\lambda_n^{\frac{1}{q-2}}
    \end{equation}
for $n$ sufficiently large. Without loss of generality we may assume that
\begin{equation}\label{epsilon bound}
    \epsilon^{q-2}\leq 
4^{q-3}. 
\end{equation} Possibly by making $R_\epsilon$ larger we may assume
\begin{equation}\label{eq R bound}
    \frac{N-1}{R}\leq \frac12.
\end{equation}
Moreover, we fix $N_\epsilon$ sufficiently large so that for all $n\geq N_\epsilon$ there holds 
\begin{equation}\label{potential bound Anr}
4{\underline{V}}\leq \lambda_n.
\end{equation}
Our aim  here is to make use of the comparison principle to establish convenient bounds on $u_n$. To this end, we define the linear operator $L_n(u)=-\Delta u+\lambda_nV_n(r)u$ with
$$V_n(r)=\frac{V(r)}{\lambda_n}+1-\frac{(u_n(r))^{q-2}}{\lambda_n}
.$$
Since $u_n$ is a solution to \eqref{L_EQ:bounded approximation} we have $L_n(u_n)=0.$ Under \eqref{Eqbound in Anr}-\eqref{potential bound Anr}
 we may check for all $n\geq N_\epsilon$ and $R\geq R_\epsilon$ (note $R_\epsilon$ is also dependent on $N_\epsilon$) that
\begin{equation}\label{Eq V_n bound}\begin{split}
    V_n(r)&\geq 1-\frac14-\frac{\epsilon^{q-2}}{4^{q-2}}
    \geq \frac12,\quad  \quad r\in \mathcal{A}_{R,n}.
    \end{split}
\end{equation}
Consider now, for $i\in \{1,\cdots, k\}$ and {$\gamma \in (0,  \frac12)$} the sequences of $N$-dimensional functions
$$\varphi_{n}^{i}(|x|)=\varphi_{n}^{i}(r)=e^{-\gamma\lambda_n^\frac12|r-a_n^{i}|}.$$
One can see through direct calculation that the functions $\{\varphi_{n}^{i}\}$ are radial solutions to 
{$$-\Delta \varphi=-\varphi''(r)-\frac{N-1}{r}\varphi'(r)=-\gamma^2\lambda_n\varphi_{n}^{i}-\frac{(N-1)}{r}\frac{r-a_n^i}{|r-a_n^i|}\gamma \lambda_n^{\frac12}\varphi_n^i,\quad r\in \mathcal{A}_{r,n}.$$
If $r\in \mathcal{A}_{r,n}$ is such that $0\leq\ r\leq a_n^i-R\lambda_n^{-\frac12}$ then $-\frac{(N-1)}{r}\frac{r-a_n^i}{|r-a_n^i|}\geq 0$. If on the contrary $r\geq a_n^i+R\lambda_n^{-\frac12}$, then
$$-\left(\frac{1}{\lambda_n}\right)\frac{(N-1)}{r}\frac{r-a_n^i}{|r-a_n^i|}\gamma\lambda_n^{\frac12}\geq -\frac{(N-1)}{a_n^i\lambda_n^{\frac12}+R}\gamma\geq -\frac{(N-1)}{R}\gamma,$$
so that, by means of \eqref{eq R bound},
\begin{equation}\label{eq derivative term estimate}
{
-\frac{(N-1)}{r}\frac{r-a_n^i}{|r-a_n^i|}\gamma\lambda_n^{\frac12}}\geq -\frac{(N-1)}{R}\gamma\lambda_n\geq -\frac{1}{2}\gamma\lambda_n
\end{equation}
for all $r\in \mathcal{A}_{R,n}.$
As a result, using \eqref{Eq V_n bound}, and \eqref{eq derivative term estimate}
\begin{equation}\label{Eq L_n bound} L_n(\varphi_{n}^{i})=-\gamma^2\lambda_n\varphi_{n}^{i}-\frac{(N-1)}{r}\frac{r-a_n^i}{|r-a_n^i|}\gamma \lambda_n^{\frac12}\varphi_n^i+\lambda_n V_n(r)\varphi_{n}^{i}\geq\lambda_n\varphi_{n}^{i}(V_n(r)-\gamma^2-\frac{\gamma}2)\geq 0,\quad  \text{in}\ \mathcal A_{R,n}.
\end{equation}}
 Now observe that by linearity of the operator and \eqref{Eq L_n bound} there holds
\begin{equation}\label{comp ppl operator estimate}
    L_n(\epsilon\lambda_n^{\frac{1}{q-2}}e^{\gamma R}\varphi_{n}^{i}- u_n)=\epsilon\lambda_n^{\frac{1}{q-2}}e^{\gamma R}L_n(\varphi_{n}^{i})\geq 0 .
\end{equation}
  Furthermore, by \eqref{Eqbound in Anr}, we have for all $r=\{ a_n^{i}\pm{R\lambda_n^{-{1}/{2}}}\}\subset \partial\mathcal{A}_{R,n}$ that
\begin{align}
    \epsilon\lambda_n^{\frac{1}{q-2}}e^{\gamma R}\varphi_{n}^{i}- u_n
    &\geq 
    \epsilon\lambda_n^{\frac{1}{q-2}}e^{\gamma R}e^{-\gamma \lambda_n^\frac12(R\lambda_n^{-\frac12})}- u_n
    \geq \epsilon\lambda_n^{\frac{1}{q-2}}- u_n\geq \frac14 \epsilon\lambda_n^{\frac{1}{q-2}}- u_n\geq 0.\label{comp ppl boundary estimate}
\end{align}
In view of \eqref{comp ppl operator estimate} and \eqref{comp ppl boundary estimate} we can use the comparison principle between the functions $u_n$ and $\epsilon\lambda_n^\frac{1}{q-2}e^{\gamma R}\varphi_{n}^{i}$ to establish
$$u_n\leq \epsilon\lambda_n^\frac{1}{q-2}e^{\gamma R}\varphi_{n}^{i}\quad \text{on}\quad [0,+\infty) \backslash (a_n^{i}-{R\lambda_n^{-{1}/{2}}}, a_n^{i}+{R\lambda_n^{-{1}/{2}}}).$$
Repeating the same argument using the function $\varphi_n=\sum_{i=1}^M\varphi_n^{i}(x)$ instead we obtain the desired estimate
$$u_n(r)\leq \epsilon\lambda_n^{\frac{1}{q-2}}e^{\gamma R}\varphi_n =\epsilon\lambda_n^{\frac{1}{q-2}}e^{\gamma R}\sum_{i=1}^Me^{-\gamma\lambda_n^\frac12|r-a_n^{i}|}\quad \text{on}\quad \mathcal{A}_{R,n}.$$
\end{proof}

From the previous Theorem \ref{theo sec blow} we have obtained the existence of a finite number of sequences of local maxima away from which an exponential decay holds. In combination with Theorem \ref{mainblow}, these lead to the formation of maxima which concentrate at spheres or of spike layers, depending on whether \eqref{eq: blow up on sphere} or \eqref{eq: blow up origin} holds respectively.

As an additional property we will now prove that the maxima that we have obtained are in fact the only possible points of local maximum. 
Our proof is inspired by \cite[Corollary 3.2]{EMSS}.
\begin{corollary}\label{all maximum points}
    Let $(V0)-(V1)$ hold and $\{(\rho_n,\lambda_n,u_n)\}\in[\frac12,1]\times\R\times H_\rad^1(\R^N)$ be a sequence of solutions to  \eqref{L_EQ:bounded approximation}. Suppose $\rho_n\to 1^-$ and $\lambda_n\to+\infty$. Suppose moreover that the Morse index of the sequence $\{u_n\}$ is uniformly bounded, i.e. there exists $m^*<+\infty$ such that,
$$
\sup_{n\in \N} \morse(u_n) \leq m^*.
$$
Up to sequences and for $n$ sufficiently large, the points $\{a_n^i\} \subset [0, + \infty),$ $i \in \{1, \cdots, M \}$ with $M\le m^*$ obtained from Theorem \ref{theo sec blow} are the \emph{only} points of local maximum of $u_n$.
\end{corollary}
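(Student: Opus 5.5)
We argue by contradiction. Suppose that, along a subsequence, $u_n$ has a local maximum point $b_n$ which is not one of the $a_n^i$. There are two regimes, depending on the rescaled distance $\lambda_n^{1/2} d_n(b_n)$, where $d_n(r)=\min_i|r-a_n^i|$ as in Theorem \ref{theo sec blow}.

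\emph{Far regime: $\lambda_n^{1/2} d_n(b_n)$ is unbounded.} Passing to a subsequence we may assume $\lambda_n^{1/2} d_n(b_n) \to +\infty$. Lemma \ref{Lemma maxima bounds} gives
\[
u_n(b_n)\geq (\lambda_n+\underline V)^{\frac1{q-2}},
\quad\text{so}\quad
\lambda_n^{-\frac1{q-2}}u_n(b_n)\to c\ge 1 .
\]
On the other hand, for every fixed $R$ we have $b_n\in\mathcal A_{R,n}$ for $n$ large. Then Theorem \ref{theo sec blow} (a) iii) gives $\limsup_n \lambda_n^{-\frac1{q-2}}u_n(b_n)\le \eta(R)$ with $\eta(R)\to0$ as $R\to\infty$. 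This contradicts the lower bound, so this regime cannot occur.

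\emph{Near regime: $\lambda_n^{1/2}d_n(b_n)$ is bounded.} Then, along a subsequence, $\lambda_n^{1/2}(b_n-a_n^i)\to s_0$ for some fixed $i$. We use the rescaling $v_n$ of Theorem \ref{mainblow} centered at $a_n^i$. The hypotheses of that theorem are satisfied thanks to Theorem \ref{theo sec blow} (a) ii). The point $s_n:=\lambda_n^{1/2}(b_n-a_n^i)$ (respectively $y_n$ with $|y_n|=\lambda_n^{1/2}b_n$ in the origin case) is a critical point of $v_n$. By $C^1_\loc$ convergence, $s_0$ is a critical point of the limit $\phi$.

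In the sphere case, $\phi$ solves \eqref{Eq scaled convergence sphere}. It is even and strictly decreasing in $|s|$, so $\phi'(s)\neq0$ for $s\neq0$, which forces $s_0=0$. In the origin case, $\phi$ is the radial ground state, which is radially strictly decreasing by \cite{GiNiNi81}, so again $|y_0|=0$.

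It remains to exclude critical points converging to the maximum point at rate $o(\lambda_n^{-1/2})$. This is the main obstacle, because $C^1$ convergence alone does not rule out two critical points of $v_n$ collapsing onto $0$; we need second-order information. Since $V\in L^\infty$, we have $u_n\in W^{2,\infty}_\loc$, and the rescaled equation gives $v_n''\to\phi''$ in $L^\infty_\loc$.

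\begin{itemize}
\item Sphere case: the drift term is $(N-1)\frac{\epsilon_n}{\epsilon_n s+a_n}v_n'$, and $a_n/\epsilon_n\to\infty$ by \eqref{eq: maxima ration shpere}. Hence
\[
v_n''=v_n+\epsilon_n^2Vv_n-\rho_nv_n^{q-1}+o(1)
\]
uniformly near $0$, and the right-hand side converges uniformly to $\phi-\phi^{q-1}$. At $0$ this equals $\phi(0)\bigl(1-\tfrac q2\bigr)<0$, since $\phi(0)^{q-2}=q/2$. So $v_n''<0$ on a fixed neighbourhood $(-\delta,\delta)$ for $n$ large. Then $v_n$ is strictly concave there and has exactly one critical point in it, namely $s=0$, that is $a_n^i$. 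Therefore $b_n=a_n^i$, a contradiction.

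\item Origin case: we work with the radial ODE $-(r^{N-1}v_n')'=r^{N-1}g_n$, where
\[
g_n=\rho_nv_n^{q-1}-(1+\epsilon_n^2V)v_n\to \phi^{q-1}-\phi .
\]
At $0$ the limit $\phi(0)^{q-1}-\phi(0)$ is strictly positive: otherwise the radial ground state could not be decreasing with $-\Delta\phi(0)\ge0$, and equality is excluded because $\phi(0)^{q-2}>1$. So $g_n>c>0$ on a ball $B(0,\delta)$ uniformly in $n$. Integrating, $r^{N-1}v_n'(r)=-\int_0^r t^{N-1}g_n<0$ for $0<r<\delta$. Hence $0$ is the only critical point of $v_n$ in $B(0,\delta)$, which forces $b_n=0=a_n^i$ up to relabelling. (Indeed $a_n^i$ is the global maximum on $B(0,\delta\lambda_n^{-1/2})$, so $a_n^i=0$ as well.)
\end{itemize}

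Both regimes lead to a contradiction, so for $n$ large the points $a_n^i$ are the only local maxima of $u_n$.
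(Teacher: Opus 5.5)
Your proof is correct and follows the paper's route. You exclude maxima with $\lambda_n^{1/2}d_n(b_n)\to\infty$ by playing the smallness in Theorem \ref{theo sec blow} (a) iii) against the lower bound at local maxima (the paper phrases this as $-\Delta u_n<0$ on $\mathcal A_{R,n}$), then use $C^1_{\loc}$ convergence to force nearby maxima onto $a_n^i$ and second-order information from the limit profile to finish. Your last step (uniform strict concavity of $v_n$, resp.\ monotonicity from the integrated radial ODE, on a fixed rescaled neighbourhood) is a cleaner form of the paper's ``strict local maximum, then an intermediate local minimum'' argument and avoids evaluating $v_n''$ pointwise. The one fact you assert without justification is $\phi(0)>1$ for the radial ground state; it holds because $\phi(0)\le 1$ would make $\phi$ radially subharmonic, hence nondecreasing, contradicting its decay.
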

\begin{proof}
    Let $\{b_n\}$ be a sequence of (radial) maxima of $u_n$. We will begin by proving that for $n$ sufficiently large
    $$d_n(b_n) = \min_{1\le i \le M} |b_n-a^i_n|\leq R\lambda_n^{-\frac12}.$$
    Suppose, in order to reach a contradiction, that this is not the case and thus for $n$ sufficiently large $b_n\in \mathcal{A}_{R,n}$. As a consequence, estimates \eqref{Eqbound in Anr}, \eqref{epsilon bound} and \eqref{potential bound Anr} apply implying that
    \begin{align*}
        -\Delta u_n=u_n[\rho_nu_n^{q-2}-\lambda_n-V(x)]
        \leq u_n\left[\lambda_n\left(\left(\frac\epsilon 4\right)^{q-2}-1\right)-\underline{V}
        \right]
        <0,
    \end{align*}
so $b_n$ cannot be a local maximum of $u_n$. As a result, there exists $i\in [1\dots M]$ such that
$$|b_n-a_n^i|<R\lambda_n^{-\frac12}.$$
To finish the proof we must make the distinction between our two usual possibilities.

First, suppose \eqref{eq: blow up on sphere} or, equivalently
$$\limsup_{n\to\infty}{a_n^i}\lambda_n^{\frac12}=+\infty.$$
Take $c_n=\lambda_n^{\frac12}|b_n-a_n^i|$ and recall the rescaled functions $v_n$ from \eqref{eq:good rescaling sphere}.
By construction, $c_n$ is a local maximum of $v_n$. Now, since $v_n\to \phi$ in $C^1_{\loc}(\R)$ where $\phi$ is a solution to \eqref{Eq scaled convergence sphere} and $0$ is the only local maximum of $\phi$ it is forced that $c_n\to 0$. As a result, from the calculation

\begin{align*}
    -v_n''(c_n)&=(N-1)\frac{1}{\lambda_n^{\frac12}b_n}-V(c_n)\lambda_n^{-1}v_n(c_n)-v_n(c_n)+\rho_nv_n(c_n)^{q-1}\\
    &= -\phi(0)+\phi(0)^{q-1}+o_n(1)=\left(\frac2{q}\right)^{-\frac{1}{q-2}}\left(\frac{q}{2}-1\right)+o_n(1)>0,
\end{align*}
it can also be seen that the sequence of local maxima $c_n$ are in fact strict local maxima of $v_n$ for $n$ sufficiently large. This implies that 
$\{b_n\}$ is a sequence of strict local maxima \emph{in the radial direction}. Focusing on the associated unidimensional profile of $u_n$ we have $b_n$ a strict local maximum and $a_n^i$ local maximum, so in between these points there must be a sequence of local minimum $s_n$. However, using the same arguments as before we see that $t_n=\lambda_n|s_n-a_n^i|\to 0$ and $-v_n''(t_n)>0$, a contradiction.

Second, suppose \eqref{Eq scaled convergence origin} or, equivalently,
$$\limsup_{n\to\infty}{a_n^i}\lambda_n^{\frac12}=0.$$
Similarly as before take
$c_n=\lambda_n^{\frac12}b_n$ and recall the  rescaled functions from \eqref{eq:good rescaling origin}. By construction, $c_n$ is a (radial) local maximum of $v_n$. Moreover, since $v_n\to \phi$ in $C^1_\loc(\R^N)$ and the fact that the origin is the only point of maximum of $\phi$ once again by the standard result \cite[Theorem 2]{GiNiNi81}, it is forced that $c_n\to 0$. Now, recall that $$\phi(0)=\tilde{\lambda}^{-\frac{1}{q-2}}\in[1,\infty)$$ and observe that, up to subsequences 
$$\lim_{n\to \infty}\frac{1}{c_n}=\lim_{n\to \infty}\frac{1}{\lambda_n^{\frac12}b_n}\geq \lim_{n\to \infty}\frac{1}{\lambda_n^{\frac12}(a_n^i+R\lambda_n^{-\frac12})}=\frac1{R}.$$
Working with the associated unidimensional equation of $v_n$ we have

\begin{align*}
    -v_n''(c_n)&=(N-1)\frac{1}{\lambda_n^{\frac12}c_n}-V(c_n)\lambda_n^{-1}v_n(c_n)-v_n(c_n)+\rho_nv_n(c_n)^{q-1}\\
    &\geq \frac{1}{R} -\phi(0)+\phi(0)^{q-1}+o_n(1)\\
    &=\frac{1}{R}\tilde\lambda^{-\frac{1}{q-2}}+\left(\tilde\lambda-1\right)+o_n(1)>0,
\end{align*}
so that $c_n$ is a strict local maximum of $v_n$ (in the radial direction). We finish in an analogous way by deducing the existence of a local minimum $s_n$ of $u_n$ which ultimately leads to a contradiction.
\end{proof}

We will now proceed to prove Theorem \ref{theo maxima to 0} and discard the existence of any blow-up sequences. To do so, we first establish an additional relation.

\begin{lemma}\label{one Pohozaev}[Poho\v{z}aev type identity] Let $u\in H^1_\loc([0, +\infty))$, $V\in L^1_{\loc}([0,+\infty))$, and let be a positive solution to 
\begin{equation}\label{radial sphere}
\begin{cases}
-u''(r)-\frac{N-1}{r}u'(r)+V(r)u(r)+\lambda u(r)=\rho u(r)^{q-1}, &\text{in}\ [0,+\infty);\\
u'(0) =0
\end{cases}
\end{equation}
with $q>2$. Then, for every interval $[a,b] \subset [0, +\infty)$ there holds 
\begin{equation}\label{eq pozohaev pseudo}
    \begin{split}
        \rho\left(N-\frac32-\frac1{q}\right)\int_a^bu^q=\ &\left(N-2\right)\int_a^b\lambda u^2+\left(\frac32-N\right)\left(\frac{N-1}{2}\right)\int_a^b \frac{u^2}{r^2}\\&-\left(\frac32-N\right)\int_a^bVu^2+\int_a^b V(r)ruu'\\
				&
        +\left[\left(\frac32-N\right)uu'+\left(\frac32-N\right)\left(\frac{N-1}{2}\right)\frac{u^2}{r}+\frac{\lambda}{2}u^2r - \frac{\rho}{q}u^qr - \frac12 r(u')^2\right]_a^b.
    \end{split}
\end{equation}

\end{lemma}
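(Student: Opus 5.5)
The identity \eqref{eq pozohaev pseudo} comes from combining the two classical multipliers of a one-dimensional Poho\v{z}aev argument: multiply \eqref{radial sphere} by $r u'(r)$, multiply it by $u(r)$, integrate both over $[a,b]$, and take the linear combination that eliminates $\int_a^b (u')^2$. Throughout I use the regularity from Remark \ref{R2}: $u\in C^{1,\alpha}_\loc\cap W^{2,\infty}_\loc$, so all integrations by parts below are legitimate on any $[a,b]$ with $a>0$.

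\emph{First multiplier, $r u'$.} Each term becomes a total derivative plus a remainder:
\begin{align*}
-u'' r u' &= -\tfrac12\bigl(r(u')^2\bigr)' + \tfrac12 (u')^2, \qquad -\tfrac{N-1}{r}u'\, r u' = -(N-1)(u')^2,\\
\lambda u\, r u' &= \tfrac{\lambda}{2}(r u^2)' - \tfrac{\lambda}{2}u^2, \qquad \rho u^{q-1} r u' = \tfrac{\rho}{q}(r u^q)' - \tfrac{\rho}{q}u^q.
\end{align*}
The potential term $V r u u'$ is left untouched, because $V$ is only $L^\infty$ and cannot be differentiated. Integrating gives
\[
\Bigl(\tfrac32-N\Bigr)\int_a^b (u')^2 + \int_a^b V r u u' - \tfrac{\lambda}{2}\int_a^b u^2 + \tfrac{\rho}{q}\int_a^b u^q + \Bigl[-\tfrac12 r (u')^2 + \tfrac{\lambda}{2} r u^2 - \tfrac{\rho}{q} r u^q\Bigr]_a^b = 0 .
\]

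\emph{Second multiplier, $u$.} Use $-u''u = -(uu')' + (u')^2$ and
\[
-\tfrac{N-1}{r}uu' = -\tfrac{N-1}{2}\Bigl[\bigl(\tfrac{u^2}{r}\bigr)' + \tfrac{u^2}{r^2}\Bigr].
\]
This expresses $\int_a^b (u')^2$ as
\[
\int_a^b (u')^2 = \rho\int_a^b u^q - \lambda\int_a^b u^2 - \int_a^b V u^2 + \tfrac{N-1}{2}\int_a^b \tfrac{u^2}{r^2} + \Bigl[uu' + \tfrac{N-1}{2}\tfrac{u^2}{r}\Bigr]_a^b .
\]

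\emph{Combination.} Substitute this expression into the first identity, multiplied by the coefficient $\frac32-N$, and move the $u^q$ terms to one side.
\begin{itemize}
\item The $u^q$ coefficient becomes $-\rho\bigl(\frac32-N+\frac1q\bigr) = \rho\bigl(N-\frac32-\frac1q\bigr)$.
\item The $\lambda$ coefficient becomes $-\lambda\bigl(\frac32-N\bigr) - \frac{\lambda}{2} = (N-2)\lambda$.
\item The remaining interior terms $\bigl(\frac32-N\bigr)\frac{N-1}{2}\int_a^b \frac{u^2}{r^2}$, $-\bigl(\frac32-N\bigr)\int_a^b Vu^2$ and $\int_a^b V r u u'$ appear exactly as in \eqref{eq pozohaev pseudo}.
\end{itemize}
The boundary bracket collects the two sets of boundary terms, reproducing the bracket in \eqref{eq pozohaev pseudo}. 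This step is pure bookkeeping; I expect no difficulty there beyond tracking signs.

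The main point needing care is the endpoint $a=0$, where the terms $\frac{u^2}{r}$ and $\int \frac{u^2}{r^2}$ are singular. For $a>0$ everything is justified as above. If $a=0$ is intended, I would argue on $[a',b]$ and let $a'\to0^+$. Since $u(0)>0$, both $\int_0 \frac{u^2}{r^2}$ and $\frac{u^2(a')}{r}$ blow up, so the identity should be read for $a>0$, or as the limit in which these two divergent contributions are grouped together. In any case the identity will only be applied on intervals around blow-up radii $a_n^i$ with $\lambda_n^{1/2}a_n^i\to\infty$, which stay away from $0$.
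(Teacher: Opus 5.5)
Your proposal is correct and follows the paper's own proof: multiply by $ru'$ and by $u$, integrate by parts, and substitute the second identity into the first. Your coefficients and boundary bracket all check out, and your caveat that the identity only makes sense for $a>0$ (or with the divergent $\int_a^b u^2/r^2$ and $u(a)^2/a$ grouped together) is accurate; the paper passes over this point.

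One small fix: the lemma assumes only $V\in L^1_{\loc}$, so do not rely on Remark \ref{R2} (which needs $V\in L^\infty$) to justify the integrations by parts. Instead use the equation itself, $(r^{N-1}u')'=r^{N-1}(V+\lambda-\rho u^{q-2})u\in L^1_{\loc}(0,+\infty)$, which makes $u'$ absolutely continuous on $[a,b]$ whenever $a>0$.
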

\begin{remark}
    Recall that the radial profile of solutions to \eqref{L_EQ:bounded approximation}, 
satisfy \eqref{radial sphere}. 
\end{remark}

\begin{proof}
    To begin with, multiplying \eqref{radial sphere} by $ru'$ and integrating we obtain
    \begin{equation*}
        -\int_a^b u''u'r-\int_a^b\frac{N-1}{r}(u')^2r=-\int_a^bV(r)ruu'-\lambda\int_a^buu'r+\rho\int_a^bu^{q-1}u'r.
    \end{equation*}
    Performing some integration by parts then yields
     \begin{equation*}
     \begin{split}
        \frac12\left(\int_a^b (u')^2-\left[{(u')^2}r\right]^b_a\right)&-({N-1})\int_a^b(u')^2 \\&=-\int_a^bV(r)ruu'+\frac{\lambda}2\left(\int_a^bu^2-[u^2r]_a^b\right)
        -\frac{\rho}{q}\left(\int_a^bu^{q}-[u^qr]_a^b\right)
        \end{split}
    \end{equation*}
    which, after some rearrangement gives
 \begin{equation}\label{eq pozo 1}
     \begin{split}
        \left(\frac32-N\right)\int_a^b (u')^2-\left[\frac{r}{2}{(u')^2}\right]^b_a=-\int_a^bV(r)ruu'+\frac{\lambda}2\left(\int_a^bu^2-[u^2r]_a^b\right)
        -\frac{\rho}{q}\left(\int_a^bu^{q}-[u^qr]_a^b\right).
        \end{split}
    \end{equation}
On the other hand, multiplying \eqref{radial sphere} by $u$ we obtain
\begin{equation*}
        -\int_a^b u''u=\int_a^b\frac{N-1}{r}u'u-\int_a^bV(r)u^2-\lambda\int_a^bu^2+\rho\int_a^bu^{q}.
    \end{equation*}
Integration by parts this time yields
\begin{equation}\label{eq pozo 2}
     \begin{split}
        \int_a^b (u')^2-\left[{u'u}\right]^b_a =\frac{N-1}{2}\left(\int_a^b\frac{u^2}{r^2}+\left[\frac{u^2}r\right]_a^b\right)-\int_a^bV(r)u^2-\lambda\int_a^bu^2+\rho\int_a^bu^{q}.
        \end{split}
    \end{equation}
Substituting \eqref{eq pozo 2} into \eqref{eq pozo 1} yields \eqref{eq pozohaev pseudo}.
\end{proof}
The next lemma provides a useful estimate on one of the terms in \eqref{eq pozohaev pseudo}.

\begin{lemma} Let $\mathbf{u}\in H^1_\rad(\R^N)$, and let $u$ be its unidimensional profile, i.e. $\mathbf{u}(x)=u(|x|)$ for all $x\in \R^N$. Let $V\in L^\infty(\R^N)$ and $0<a$. Then, for all $a\leq b\leq +\infty$,
\begin{equation}\label{potential estimate}
    \left|\int_{a}^{b}V(r)ruu'dr\right|\leq  \frac{\|V\|_{\infty}}{|S^{N-1}|}  \frac{1}{a^{N-2}}\|\mathbf u\|_{L^2(\R^N)}\|\nabla \mathbf u\|_{L^2(\R^N)}
\end{equation}

where $|S^{N-1}|$ denotes the area of the unit sphere in $\R^N$.
\end{lemma}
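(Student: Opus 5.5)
The idea is to convert the one-dimensional integral into an $N$-dimensional one, using the weight $r^{N-1}$ coming from polar coordinates, and then apply Cauchy--Schwarz. Since $r\ge a>0$ on $[a,b]$, we can write
\[
r = r^{N-1}\, r^{-(N-2)} \le r^{N-1} a^{-(N-2)},
\]
where we use $N\ge 2$, so that $r^{-(N-2)}\le a^{-(N-2)}$. This gives
\[
\left|\int_a^b V(r)\, r\, u u'\,dr\right| \le \|V\|_\infty\, a^{-(N-2)} \int_a^b |u|\,|u'|\, r^{N-1}\,dr .
\]

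Next, we recognize the right-hand integral through polar coordinates. For the radial function $\mathbf u$ we have $|\nabla \mathbf u(x)| = |u'(|x|)|$ almost everywhere, and therefore
\[
\int_a^b |u|\,|u'|\, r^{N-1}\,dr = \frac{1}{|S^{N-1}|}\int_{A} |\mathbf u|\,|\nabla \mathbf u|\,dx ,
\]
where $A=\{a\le|x|\le b\}$. Cauchy--Schwarz on $A\subset\R^N$ bounds this by $|S^{N-1}|^{-1}\|\mathbf u\|_{L^2(\R^N)}\|\nabla\mathbf u\|_{L^2(\R^N)}$, which is exactly \eqref{potential estimate}.

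The only point needing care is justification. First, $u\in H^1_{\loc}((0,\infty))$ is absolutely continuous on $[a,b]$ when $a>0$, since it is the profile of an $H^1_\rad$ function. Second, the case $b=+\infty$ must be handled. Here the integrand is absolutely integrable, because the weighted bound above is finite, so we can apply the estimate on $[a,b]$ and let $b\to\infty$ by dominated convergence. I expect no real obstacle; the main thing to check is the exponent bookkeeping $r\le r^{N-1}a^{2-N}$, which requires $N\ge2$, as assumed throughout.
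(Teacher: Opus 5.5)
Your proposal is correct and follows essentially the same route as the paper. You bound $r\le a^{2-N}r^{N-1}$ on $[a,b]$ using $N\ge 2$, rewrite the weighted integral via polar coordinates as $|S^{N-1}|^{-1}\int_{\{a\le|x|\le b\}}|\mathbf u|\,|\nabla\mathbf u|\,dx$ using $|\nabla\mathbf u(x)|=|u'(|x|)|$, and conclude by Cauchy--Schwarz; your remarks on absolute continuity for $a>0$ and on the case $b=+\infty$ are justification the paper leaves implicit.
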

\begin{proof}
    Due to the radial nature of $\mathbf{u}$, it satisfies
$$\nabla \mathbf{u}(x)=u'(|x|)\cdot\frac{x}{\|x\|}$$
and thus
$$|\nabla \mathbf{u}(x)|=|u'(|x|)|.$$ With this relation in mind, letting 
$$A((a,b)):=\{x\in\R^N\colon |x|\in(a,b)\}$$
there holds 
\begin{align*}
    \left|\int_a^bV(r)ruu'dr\right| & \leq  \int_a^b\left|V(r)ruu'\right|dr \\
    & \leq \|V\|_{\infty}\int_a^b r|uu'|dr\\
		& \leq \|V\|_{\infty}\frac{1}{a^{N-2}}\int_a^b r^{N-1}|uu'|dr \\
	\	& \leq\ \|V\|_{\infty}\frac{1}{a^{N-2}}\frac{1}{|S^{N-1}|} \int_{A((a,b))} |u(|x|)||u'(|x|)|dx \\
    & \leq  \|V\|_{\infty}\frac{1}{a^{N-2}}  \frac{1}{|S^{N-1}|} \|\mathbf{u}\|_{L^2(\R^N)}\|\nabla \mathbf{u}\|_{L^2(\R^N)}.
\end{align*}

\end{proof}

We finally  give the 
\begin{proof}[Proof of Theorem \ref{theo maxima to 0}.]

We make a proof by contradiction, assuming that $\lambda_n \to \infty$ at least for a subsequence.
From Theorem \ref{theo sec blow} there exist at most $M$ sequences $a_n^1,\dots,a_n^M$ with $M\leq m^*$ satisfying i) and ii). Passing to subsequences, without loss of generality we may rename the sequences so that they are ordered, i.e. $a_n^i\leq a_n^j $ if $i\leq j.$ 

Our procedure to proving the result will be divided into two main steps. First, we will suppose that a given sequence $\{a_n^i$\} satisfies $a_n^i\lambda_n^{1/2}\to +\infty$ and obtain a contradiction. To do so, we implement the sequence $\{u_n\}$ into \eqref{eq pozohaev pseudo} with a carefully chosen sequence of intervals $I_n$ containing the selected sequence $\{a_n^i\}$.
Once we have discarded the possibility that $a_n^i\lambda_n^{1/2}\to +\infty$, the second step will then discard the possibility that $\{a_n^i\lambda_n^{1/2}\}$ remains bounded. Note already that from Remark \ref{rem blow up cases} we know that if such sequence  exists then it is unique.  

Before delving into the procedure an observation is in order. From \eqref{ALL1} and \eqref{eq:gradient boundedness Rn}, we obtain that, for $n\in \N$ sufficiently large,
$$\left(\frac{q-2}{2q}\right)\|\nabla u_n\|_{L^2(\R^N)}^2\leq E_{\rho_n}(u_n) +\frac{\lambda_n\mu_2^2}{q}- \left(\frac{q-2}{2q}\right) \underline{V} \mu_2^2\leq \frac{3\mu_2
    ^2}{q}\lambda_n
    $$
    or, equivalently
\begin{equation}\label{eq gradient estimate}
    \|\nabla u_n\|_{L^2(\R^N)}\leq \mu_2\sqrt{\frac{6\lambda_n}{q-2}}.
\end{equation}

\textit{Step 1: discarding the possibility \eqref{eq: blow up on sphere}}.

Here we work under the assumption that
\begin{equation}\label{goinfinity}
\lambda_n^{\frac{1}{2}}a_n^i \to + \infty,
\end{equation}
for some given $i\in [1,\dots,M]$.
For $i \leq j \leq M,$ let
$$\overline{k}_j=\lim_{n\to \infty}\frac{a_n^{j}}{a_n^i}$$
(note that, passing to subsequences, such limits exist) and let
$$\alpha=\max \Big\{i \leq j \leq M \colon \overline{k}_j\neq+\infty \Big\}.$$ 
We set $\overline{K} = \overline{k}_{\alpha}$. Recall that the sequences are ordered and so $\overline{K} \in [1, + \infty)$.

Also let, for $1 \leq j \leq i$,
$$\underline{k}_j=\lim_{n\to\infty}\frac{a_n^{j}}{a_n^i}$$
and
$$\beta=\min \Big\{1 \leq j \leq i, \colon \underline{k}_j >0\Big\}.$$ 
We set $\underline{K} = \underline{k}_{\beta}$. Note that $\underline{K} \in (0,1].$

Finally, define
\begin{equation}\label{keyset}
I_n=\Big[\frac{1}{2} \underline{K}a_n^i, \frac{3}{2} \overline{K} a_n^i \Big]\supset[a_n^i-R\lambda_n^{-\frac12},a_n^i+R\lambda_n^{-\frac12}].
\end{equation}
Also let
\begin{equation}\label{setJi}
    J_i=\{j\in \{\beta,\dots, \alpha\} \}.
\end{equation}
    
As anticipated, our aim is to substitute $[a,b]:=I_n, u=u_n$ into \eqref{eq pozohaev pseudo} after multiplication by $\lambda_n^{\frac12-\frac{q}{q-2}}$. To do so we will analyze each of the terms separately. Let us focus first on the boundary terms. We shall prove that there can all be neglected.  In this aim some estimates are in order. 

There holds, for $n \in \N$ sufficiently large, that $a_n^{\alpha}\leq \frac{5}{4} \overline{K}a_n^{i}$ and so 
\begin{equation}\label{L1}
\frac{3}{2} \overline{K}a_n^{i}-a_n^j \geq \frac{3}{2} \overline{K}a_n^{i} -a_n^\alpha\geq \frac14 \overline{K}a_n^i, \quad \mbox{for } i \leq j\leq\alpha.
\end{equation}
Also, if $\alpha <M$, we have, by definition of $\alpha$, that
$$\lim_{n\to\infty}\frac{a_n^{\alpha+1}}{a_n^i}=+\infty$$ and so for $n \in \N$ sufficiently large $a_n^{\alpha+1}\geq \frac74 \overline{K}a_n^i$.
Then, 
\begin{equation}\label{L2}
a_n^j -\frac{3}{2} \overline{K}a_n^{i} \geq  a_n^{\alpha +1} - \frac{3}{2} \overline{K}a_n^{i} \geq \frac14 \overline{K}a_n^i, \quad \mbox{for } \alpha +1 \leq j\leq M.
\end{equation}
Note that if $\alpha =M$, then $\eqref{L1}$ directly holds for $i \leq j \leq M.$  Globally thus, in view of \eqref{L1} and \eqref{L2},
\begin{equation}\label{L3}
\Big|\frac{3}{2} \overline{K}a_n^{i}-a_n^j  \Big| \geq \frac14 \overline{K}a_n^i, \quad \mbox{for } i \leq j\leq M
\end{equation}
and \eqref{L3} obviously extends to all $j \in \{1, \cdots, M \}.$  The estimate \eqref{L3} along with \eqref{eq far exponential decay} and \eqref{eq far exponential decay derivative} enable to deduce that
\begin{align*}
\left|\left.\lambda_n^{\frac12-\frac{q}{q-2}}u_nu'_n\right|_{r=\frac{3}{2} \overline{K}a_n^{i}}\right|&\leq \lambda_n^{\frac12-\frac{q}{q-2}}\left(\lambda_n^{\frac{1}{q-2}+{\frac{1}{q-2}+\frac12}}\right)\epsilon^2 e^{2\gamma R}\left(\sum_{j=1}^M e^{-\gamma\lambda_n^\frac12|\frac{3}{2} \overline{K}a_n^{i}-a_n^j|}\right)^2\\&
   \leq \lambda_n^{0}\cdot\epsilon^2 e^{2\gamma R}\left(\sum_{j=1}^M e^{- \gamma\frac{\overline{K}}{4}\lambda_n^\frac12 a_n^i} \right)^2
   \to 0,
\end{align*}
where the convergence to $0$ follows from  \eqref{goinfinity}. Similarly
\begin{equation}\label{pohozaev boundary estimates stationary}
\begin{split}
   \left.\lambda_n^{\frac12-\frac{q}{q-2}}\frac{u_n^2}{r}\right|_{r=\frac{3}{2} \overline{K}a_n^{i}}
  &=\frac{\lambda_n^{-\frac12}}{\frac{3}{2} \overline{K} a_n^i}\epsilon^2 e^{2\gamma R}\left(\sum_{j=1}^M e^{- \gamma\frac{\overline{K}}{4}\lambda_n^\frac12 a_n^i} \right)^2\to 0;\\
   \left.\lambda_n^{\frac12-\frac{q}{q-2}}\lambda_n u_n^2r\right|_{r=\frac{3}{2} \overline{K}a_n^{i}}&\leq \lambda_n^{\frac12}(\frac{3}{2} \overline{K} a_n^i)\epsilon^2 e^{2\gamma R}\left(\sum_{j=1}^M e^{- \gamma\frac{\overline{K}}{4}\lambda_n^\frac12 a_n^i} \right)^2\to 0;
   \\
    \left.\lambda_n^{\frac12-\frac{q}{q-2}}u_n^qr\right|_{r=\frac{3}{2} \overline{K}a_n^{i}}&\leq\lambda_n^{\frac12}(\frac{3}{2} \overline{K} a_n^i)\epsilon^2 e^{q\gamma R}\left(\sum_{j=1}^M e^{- \gamma\frac{\overline{K}}{4}\lambda_n^\frac12 a_n^i} \right)^q \to 0;\\
    \left.\lambda_n^{\frac12-\frac{q}{q-2}}(u_n')^2r\right|_{r=\frac{3}{2} \overline{K}a_n^{i}}&\leq\lambda_n^{\frac12}(\frac{3}{2} \overline{K}a_n^{i})
    \epsilon^2 e^{2\gamma R}\left(\sum_{j=1}^M e^{- \gamma\frac{\overline{K}}{4}\lambda_n^\frac12 a_n^i} \right)^2
    \to 0.
    \end{split}
\end{equation}
To tackle the boundary terms at $r = \frac{1}{2}\underline{K} a_n ^i$ observe that, for $n \in \N$ sufficiently large,
$\frac{3}{4}\underline{K}a_n^i\leq a_n^{\beta}$
and so 
\begin{equation}\label{L4}
a_n^j - \frac{1}{2} \underline{K}a_n^{i} \geq a_n^\beta - \frac{1}{2} \underline{K}a_n^{i} \geq \frac14 \underline{K}a_n^i, \quad \mbox{for } \beta \leq j\leq i.
\end{equation}
Also, if $\beta >1$, we have, by definition of $\beta$, that
$$\lim_{n\to\infty}\frac{a_n^{\beta -1}}{a_n^i}= 0$$ and so, for $n \in \N$ sufficiently large, $a_n^{\beta -1}\leq \frac14 \underline{K}a_n^i$.
Then, 
\begin{equation}\label{L5}
\frac{1}{2} \underline{K}a_n^{i} - a_n^j  \geq   \frac{3}{2} \underline{K}a_n^{i} - a_n^{\beta - 1} \geq \frac14 \underline{K}a_n^i, \quad \mbox{for } 1 \leq j\leq \beta -1.
\end{equation}
Note that if $\beta =1$, then $\eqref{L4}$ directly holds for $1 \leq j \leq i.$  Globally thus we get from \eqref{L4} and \eqref{L5} that
\begin{equation}\label{L6}
\Big|\frac{1}{2} \underline{K}a_n^{i}-a_n^j  \Big| \geq \frac14 \overline{K}a_n^i, \quad \mbox{for } 1 \leq j \leq i.
\end{equation}
Also \eqref{L6} obviously extends to all $j \in \{1, \cdots, M \}.$ In view of \eqref{L6} and still using \eqref{goinfinity} we can, just like for the upper bounds, show that each of the boundary terms at $r = \frac{1}{2} \underline{K}a_n^{i}$ goes to $0$ as $n \to \infty$.

We now turn to the integral terms in \eqref{eq pozohaev pseudo}. First let us show that the one involving the product $u_n u_n'$ also converges to $0$. Using \eqref{potential estimate},  \eqref{eq gradient estimate} and taking \eqref{eq mass energy unif bound} into account we get, for some constant $C>0$,
\begin{align*}
    \left|\lambda_n^{\frac{1}{2}-\frac{q}{q-2}}\int_{\frac{1}{2}\underline{K} a_n^i}^{\frac{3}{2}\overline{K} a_n^i}V(r)ru_nu_n'dr\right|&\leq \lambda_n^{\frac{1}{2}-\frac{q}{q-2}} \frac{\|V\|_{\infty}}{|S^{N-1}|} \frac{1}{(\frac{1}{2}\underline{K} a_n^i)^{N-2}}\|u_n\|_{L^2(\R^N)}\|\nabla u\|_{L^2(\R^N)}
    \\
    &\leq \lambda_n^{\frac{1}{2}-\frac{q}{q-2}} \frac{\|V\|_{\infty}}{|S^{N-1}|} \frac{1}{(\frac{1}{2}\underline{K} a_n^i)^{N-2}}\mu_2\left(\mu_2\lambda_n^{\frac12}\sqrt{\frac{6}{q-2}}\right)
    \\
    & \leq C \, \lambda_n^{-\frac{2}{q-2}}  (a_n^i)^{2-N}\to 0.
\end{align*}
Indeed if $N=2$ (or if $a_n^i \not \to 0$), we immediately deduce that the right hand side tends to $0$. If $N \geq 3$, we use that \eqref{goinfinity} gives, for $n \in \N$ sufficiently large, that $a_n^i \geq \lambda_n^{- \frac{1}{2}}$. Then taking into account that $q < 2^* = \frac{2N}{N-2}$ the conclusion also holds.

Regarding the rest of the integral terms in \eqref{eq pozohaev pseudo}, we will split them between what happens close and far from the points of maxima, that is, we make the decomposition
\begin{equation*}
I_{n}=[\frac{1}{2} \underline{K} a_n^i, \frac{3}{2}\overline{K} a_n^i ]=\left(I_{n}\cap\bigcup_{j\in {J}_i}(a_n^j-R\lambda_n^{-\frac12},a_n^j+R\lambda_n^{-\frac12})\right)\cup (I_{n} \cap\mathcal A_{R,n})
\end{equation*}
and proceed to study each of the terms individually. Define, for $j\in {J}_i$ (refer to \eqref{setJi}) and similarly to \eqref{eq:good rescaling sphere}, the functions (for simplicity in the calculations below we omit the dependence in $j$ since it does not play a role)
\begin{equation}\label{rescaling proof}
    v_n(s):=v_n^{j}(s)=\lambda_n^{-\frac{1}{q-2}}u_n\left(\frac{s}{\sqrt{\lambda_n}}+a_n^j\right).
\end{equation}
 Then, close to the points of maxima for all $j\in {J}_i$ we have, as $n\to +\infty$,
\begin{equation*}
    \begin{split}
    \lambda_n^{\frac12-\frac{q}{q-2}}&\int_{a_n^j-R\lambda_n^{-\frac12}}^{a_n^j+R\lambda_n^{-\frac12}}u_n^q(r)dr=\int_{-R}^Rv_n^q(s)ds\to \int_{-R}^R\phi^q;\\
    \lambda_n^{\frac12-\frac{q}{q-2}}&\int_{a_n^j-R\lambda_n^{-\frac12}}^{a_n^j+R\lambda_n^{-\frac12}}\lambda_nu_n^2(r)dr=\int_{-R}^Rv_n^2(s)ds\to \int_{-R}^R\phi^2;\\
    \lambda_n^{\frac12-\frac{q}{q-2}}&\int_{a_n^j-R\lambda_n^{-\frac12}}^{a_n^j+R\lambda_n^{-\frac12}}|V(r)u_n^2(r)|dr\leq\lambda_n^{-1}\|V\|_\infty\int_{-R}^Rv_n^2(s)ds\to 0;\\
    \lambda_n^{\frac12-\frac{q}{q-2}}&\int_{a_n^j-R\lambda_n^{-\frac12}}^{a_n^j+R\lambda_n^{-\frac12}}\left|\frac{u_n^2(r)}{r^2}\right|dr\leq\frac{\lambda_n^{-1}}{(a_n^j-R\lambda_n^{-\frac12})^2}\int_{-R}^Rv_n^2(s)ds=\frac{1}{(\lambda_n^\frac12a_n^j-R)^2}\int_{-R}^Rv_n^2(s)ds\to 0\\
\end{split}
\end{equation*}
where $\phi$ is the unique solution to \eqref{Eq scaled convergence sphere}. We now turn to the estimates in ${I_{n}}\cap \mathcal{A}_{R,n}$. For some constant $C>0$ we have
    \begin{align*}
    \lambda_n^{\frac{1}{2}-\frac{q}{q-2}}\int_{\mathcal{A}_{R,n} \cap I_n}u_n^q(r) dr &\leq \lambda_n^{\frac{1}{2}-\frac{q}{q-2}}\epsilon^q\lambda_n^{\frac{q}{q-2}}e^{q\gamma R}\int_{\mathcal{A}_{R,n}}\left(\sum_{j=1}^Me^{-\gamma\lambda_n^\frac12|r-a_n^{j}|}\right)^q dr\nonumber\\
    &\leq C \lambda_n^{\frac{1}{2}}\epsilon^q e^{q\gamma R}\sum_{j=1}^M \int_{\complement_{\R^N} (A(a_n^{i},R\lambda_n^{-{1}/{2}}))}e^{-q\gamma\lambda_n^\frac12|r-a_n^{j}|}dr\nonumber\\
    &\leq C \epsilon^q e^{q\gamma R} \sum_{j=1}^M \int_{R}^{+\infty}e^{-q\gamma y}dy\nonumber\\
    &\leq \epsilon_1(R),\label{eq int estimate arn finite}
\end{align*}
where $\epsilon_1(R) >0$ can be made arbitrarily small by taking $R>0$ larger.
Similarly,
\begin{align*}
    \lambda_n^{\frac{1}{2}-\frac{q}{q-2}}&\int_{\mathcal{A}_{R,n}}\lambda_nu_n^2(r)dr\leq \epsilon_2(R),\\
    \lambda_n^{\frac{1}{2}-\frac{q}{q-2}}&\int_{\mathcal{A}_{R,n}}|V(r)|u_n^2(r)dr\leq \|V\|_\infty\lambda_n^{\frac{1}{2}-\frac{q}{q-2}}\int_{\mathcal{A}_{R,n}}u_n^2(r)dr\to 0.
\end{align*}
Also, since on $I_n$, $r \geq \frac{1}{2}\underline{K} a_n^i \geq \lambda_n^{- \frac{1}{2}}$ for all $n \in \N$ sufficiently large we get that
\begin{equation*}
\lambda_n^{\frac{1}{2}-\frac{q}{q-2}} \int_{\mathcal{A}_{R,n}\cap I_n}\frac{u_n^2(r)}{r^2}dr \leq \lambda_n^{\frac{1}{2}-\frac{q}{q-2}} \int_{\mathcal{A}_{R,n}}\lambda_nu_n^2(r)dr\leq \epsilon_2(R).
\end{equation*}
In all, multiplying \eqref{eq pozohaev pseudo} with $\lambda_n^{\frac{1}{2}-\frac{q}{q-2}}$, taking the limit as $n\to \infty$ and furthermore taking the limit as $R\to +\infty$, we conclude that for every $\epsilon>0$
$$|{J}_i|\cdot\left|\left(N-\frac32-\frac1{q}\right)\int_\R \phi^q+\left(2-N\right)\int_\R  \phi^2\right|\leq \epsilon $$
where $|J_i|$ is the number of elements in the set. However, since $\phi$ satisfies \eqref{Eq scaled convergence sphere} we may implement \eqref{norm relation sphere} to obtain
\begin{align}
    \epsilon&\geq |{J}_i|\cdot\left|\left[\left(N-\frac32-\frac1{q}\right)\int_\R \phi^q+\left(2-N\right)\int_\R  \phi^2\right]\right|\nonumber\\&=|{J}_i|\cdot \left|\left(N-\frac32-\frac1{q}\right)\left(\frac{2q}{q+2}\right)+\left(2-N\right)\right|\int_\R  \phi^2\nonumber
    \\&={|{J}_i|\cdot\left(\frac{(N-1)(q-2)}{q+2}\right)\int_\R  \phi^2}.\nonumber
\end{align}
This forces $J_i=\emptyset$ so, having chosen an arbitrary $i \in \{1, \cdots, M\}$ such that $\lambda_n^\frac12 a_n^i \to +\infty$ we have arrived at the desired conclusion that \eqref{goinfinity} may not happen.

\textit{Step 2: Discarding possibility \eqref{eq: blow up origin}.}  From the previous step we have the existence of a unique sequence $\{a_n\}$ of blow-up local maxima satisfying $a_n\lambda_n^{1/2}\to 0$, and, for $n$ sufficiently large,
$$u_n(a_n)=
\max\{u_n(x)\colon x\in B(0,a_n+R_n\lambda_n^{-\frac{1}{2}})\}
$$
  for some sequence $R_n\to +\infty$. In order to now discard this final possibility we will now focus on estimating 
  \begin{equation*}\label{contradiction integral}
      \mathscr{I}:=\lambda_n^{\frac{N}{2}-\frac{2}{q-2}}\int_{\R^N \backslash B(0, a_n +R\lambda_n^{-\frac12})}|u_n|^2 dx
  \end{equation*}
  in two different ways. First, fix $\epsilon>0$ and take $n\geq N_\epsilon$ and $R\geq R_\epsilon$ as in Theorem \ref{theo sec blow}. Using  the fact that the there is only one sequence of maxima, if $n$ is sufficiently large then \eqref{eq far exponential decay} becomes 
  $$u_n(r)\leq\epsilon\lambda_n^{\frac{1}{q-2}}e^{\gamma R}e^{-\gamma\lambda_n^\frac12|r-a_n|}\quad \text{for }\quad r\geq a_n+R\lambda_n^{-\frac12}.$$
Now, since $a_n\to 0$, we may assume that $a_n<1$ for $n \in \N$ sufficiently large and thus,
  \begin{align}
       \mathscr{I}
       &\leq \lambda_n^{\frac{N}{2}-\frac{2}{q-2}}\epsilon^2\lambda_n^{\frac{2}{q-2}}e^{2\gamma R} \int_{\R^N \backslash B(0, a_n +R\lambda_n^{-\frac12})}e^{-2\gamma\lambda_n^\frac12(|x|-a_n|)} dx \nonumber\\
       &= \lambda_n^{\frac{N}{2}}\epsilon^2 e^{s\gamma R}\int_{a_n+R\lambda_n^{-\frac12}}^{+\infty}r^{N-1}e^{-2\gamma\lambda_n^\frac12(r-a_n)}dr\nonumber\\
    &=\lambda_n^{\frac{N}2} \epsilon^2 e^{2\gamma R}\int_{R\lambda_n^{-\frac12}}^{+\infty}(y+a_n)^{N-1}\lambda_n^{-\frac{N-1}{2}}e^{-2\gamma\lambda_n^\frac12y}dy\nonumber\\
    &\leq \epsilon^2 e^{s\gamma R} \int_{R}^{+\infty}(z+1)^{N-1}e^{-2\gamma z}dz \label{eq:decay in Arn}
  \end{align}
	so \eqref{contradiction integral} remains bounded as $n\to \infty$ with $R$ fixed. On the other hand,
  \begin{align}
      \mathscr{I} &=\lambda_n^{\frac{N}{2}- \frac{2}{q-2}} \left(\int_{\R^N} u_n^2 dx -\int_{B(0,a_n+R\lambda_n^{-\frac12})} u_n^2 dx \right)\nonumber\\&\geq  \lambda_n^{\frac{N}{2}- \frac{2}{q-2}}\mu_1^2 -\lambda_n^{\frac{N}{2}- \frac{2}{q-2}} \int_{B(0,a_n+R\lambda_n^{-\frac12})} u_n^2 dx \to + \infty \label{eq:non decay in Arn}
      \end{align}
			The divergence of the term \eqref{eq:non decay in Arn} 
			follows directly from \eqref{eq:mass-convergence origin}, used with $s=2$.
			
  In all, taking the limit as ${n\to\infty}$ and keeping $R$ fixed results in a contradiction between \eqref{eq:decay in Arn} and  \eqref{eq:non decay in Arn}. This implies that the sequence of Lagrange multipliers $\{\lambda_n\}$ must, in all cases, remain bounded since otherwise there would exist sequences of blow point which, as we have seen, lead to a contradiction. The proof is now finished.
\end{proof}

\renewcommand{\bibname}{References}
\bibliographystyle{plain}
\bibliography{References_Louis_Pablo_etthese}

\begin{thebibliography}{10}

\bibitem{AmMaNi03-1}
A.~Ambrosetti, A.~Malchiodi, and W.-M. Ni.
\newblock Singularly perturbed elliptic equations with symmetry: existence of
  solutions concentrating on spheres. {I}.
\newblock {\em Comm. Math. Phys.}, 235(3):427--466, 2003.

\bibitem{BaLi}
A.~Bahri and P.-L. Lions.
\newblock Solutions of superlinear elliptic equations and their {M}orse
  indices.
\newblock {\em Comm. Pure Appl. Math.}, 45(9):1205--1215, 1992.

\bibitem{BaJe18}
T.~Bartsch and L.~Jeanjean.
\newblock Normalized solutions for nonlinear {S}chr\"{o}dinger systems.
\newblock {\em Proc. Roy. Soc. Edinburgh Sect. A}, 148(2):225--242, 2018.

\bibitem{BMRV}
T.~Bartsch, R.~Molle, M.~Rizzi, and G.~Verzini.
\newblock Normalized solutions of mass supercritical {S}chr\"{o}dinger
  equations with potential.
\newblock {\em Comm. Partial Differential Equations}, 46(9):1729--1756, 2021.

\bibitem{BQZ}
T.~Bartsch, S.~Qi, and W.~Zou.
\newblock Normalized solutions to {S}chr\"{o}dinger equations with potential
  and inhomogeneous nonlinearities on large smooth domains.
\newblock {\em Math. Ann.}, 390(3):4813--4859, 2024.

\bibitem{BaSo17}
T.~Bartsch and N.~Soave.
\newblock A natural constraint approach to normalized solutions of nonlinear
  {S}chr\"{o}dinger equations and systems.
\newblock {\em J. Funct. Anal.}, 272(12):4998--5037, 2017.

\bibitem{BaSo19}
T.~Bartsch and N.~Soave.
\newblock Multiple normalized solutions for a competing system of
  {S}chr\"{o}dinger equations.
\newblock {\em Calc. Var. Partial Differential Equations}, 58(1):Paper No. 22,
  24, 2019.

\bibitem{BeBoJeVi17}
J.~Bellazzini, N.~Boussa\"{\i}d, L.~Jeanjean, and N.~Visciglia.
\newblock Existence and stability of standing waves for supercritical {NLS}
  with a partial confinement.
\newblock {\em Comm. Math. Phys.}, 353(1):229--251, 2017.

\bibitem{BeLi1}
H.~Berestycki and P.-L. Lions.
\newblock Nonlinear scalar field equations. {I}. {E}xistence of a ground state.
\newblock {\em Arch. Rational Mech. Anal.}, 82(4):313--345, 1983.

\bibitem{BeLi2}
H.~Berestycki and P.-L. Lions.
\newblock Nonlinear scalar field equations. {II}. {E}xistence of infinitely
  many solutions.
\newblock {\em Arch. Rational Mech. Anal.}, 82(4):347--375, 1983.

\bibitem{BiMe21}
B.~Bieganowski and J.~Mederski.
\newblock Normalized ground states of the nonlinear {S}chr\"{o}dinger equation
  with at least mass critical growth.
\newblock {\em J. Funct. Anal.}, 280(11):Paper No. 108989, 26, 2021.

\bibitem{BoChJeSo23}
J.~Borthwick, X.~Chang, L.~Jeanjean, and N.~Soave.
\newblock Normalized solutions of {$L^2$}-supercritical {NLS} equations on
  noncompact metric graphs with localized nonlinearities.
\newblock {\em Nonlinearity}, 36(7):3776--3795, 2023.

\bibitem{BoChJeSo24}
J.~Borthwick, X.~Chang, L.~Jeanjean, and N.~Soave.
\newblock Bounded {P}alais-{S}male sequences with {M}orse type information for
  some constrained functionals.
\newblock {\em Trans. Amer. Math. Soc.}, 377(6):4481--4517, 2024.

\bibitem{CaGaJeTr25}
P.~Carrillo, C.~De~Coster, D.~Galant, L.~Jeanjean, and C.~Troestler.
\newblock Blow-up analysis and a priori bounds for {NLS} equations on metric
  graphs.
\newblock {\em In preparation}, 2025.

\bibitem{ChJeSo24}
X.~Chang, L.~Jeanjean, and N.~Soave.
\newblock Normalized solutions of {$L^2$}-supercritical {NLS} equations on
  compact metric graphs.
\newblock {\em Ann. Inst. H. Poincar\'{e} C Anal. Non Lin\'{e}aire},
  41(4):933--959, 2024.

\bibitem{dPKoWe06}
M.~del Pino, M.~Kowalczyk, and J.-C. Wei.
\newblock Concentration on curves for nonlinear {S}chr\"{o}dinger equations.
\newblock {\em Comm. Pure Appl. Math.}, 60(1):113--146, 2007.

\bibitem{DHR}
O.~Druet, E.~Hebey, and F.~Robert.
\newblock {\em Blow-up theory for elliptic {PDE}s in {R}iemannian geometry},
  volume~45 of {\em Mathematical Notes}.
\newblock Princeton University Press, Princeton, NJ, 2004.

\bibitem{EMSS}
P.~Esposito, G.~Mancini, S.~Santra, and P.~N. Srikanth.
\newblock Asymptotic behavior of radial solutions for a semilinear elliptic
  problem on an annulus through {M}orse index.
\newblock {\em J. Differential Equations}, 239(1):1--15, 2007.

\bibitem{EsPe11}
P.~Esposito and M.~Petralla.
\newblock Pointwise blow-up phenomena for a {D}irichlet problem.
\newblock {\em Comm. Partial Differential Equations}, 36(9):1654--1682, 2011.

\bibitem{FrLaWe23}
R.~L. Frank, A.~Laptev, and T.~Weidl.
\newblock {\em Schr\"{o}dinger operators: eigenvalues and {L}ieb-{T}hirring
  inequalities}, volume 200 of {\em Cambridge Studies in Advanced Mathematics}.
\newblock Cambridge University Press, Cambridge, 2023.

\bibitem{GiNiNi81}
B.~Gidas, W.-M. Ni, and L.~Nirenberg.
\newblock Symmetry of positive solutions of nonlinear elliptic equations in
  {${\bf R}^{n}$}.
\newblock In {\em Mathematical analysis and applications, {P}art {A}}, volume~7
  of {\em Adv. Math. Suppl. Stud.}, pages 369--402. Academic Press, New
  York-London, 1981.

\bibitem{GiTr}
D.~Gilbarg and N.~Trudinger.
\newblock {\em Elliptic partial differential equations of second order}, volume
  224.
\newblock Springer, 1977.

\bibitem{GoJe18}
T.~Gou and L.~Jeanjean.
\newblock Multiple positive normalized solutions for nonlinear
  {S}chr\"{o}dinger systems.
\newblock {\em Nonlinearity}, 31(5):2319--2345, 2018.

\bibitem{HiSi}
P.~Hislop and M.~I. Sigal.
\newblock {\em Introduction to spectral theory: With applications to
  Schr{\"o}dinger operators}, volume 113.
\newblock Springer Science and Business Media, 2012.

\bibitem{IkMi20}
N.~Ikoma and Y.~Miyamoto.
\newblock Stable standing waves of nonlinear {S}chr\"{o}dinger equations with
  potentials and general nonlinearities.
\newblock {\em Calc. Var. Partial Differential Equations}, 59(2):Paper No. 48,
  20 pp,, 2020.

\bibitem{IkTa19}
N.~Ikoma and K.~Tanaka.
\newblock A note on deformation argument for {$L^2$} normalized solutions of
  nonlinear {S}chr\"{o}dinger equations and systems.
\newblock {\em Adv. Differential Equations}, 24(11-12):609--646, 2019.

\bibitem{IkYa24}
N.~Ikoma and M.~Yamanobe.
\newblock The existence and multiplicity of {$L^2$}-normalized solutions to
  nonlinear {S}chr\"{o}dinger equations with variable coefficients.
\newblock {\em Adv. Nonlinear Stud.}, 24(2):477--509, 2024.

\bibitem{Je97}
L.~Jeanjean.
\newblock Existence of solutions with prescribed norm for semilinear elliptic
  equations.
\newblock {\em Nonlinear Anal.}, 28(10):1633--1659, 1997.

\bibitem{Je99}
L.~Jeanjean.
\newblock On the existence of bounded {P}alais-{S}male sequences and
  application to a {L}andesman-{L}azer-type problem set on {$\mathbb{R}^N$}.
\newblock {\em Proc. Roy. Soc. Edinburgh Sect. A}, 129(4):787--809, 1999.

\bibitem{JeLe22}
L.~Jeanjean and T.~T. Le.
\newblock Multiple normalized solutions for a {S}obolev critical
  {S}chr\"{o}dinger equation.
\newblock {\em Math. Ann.}, 384(1-2):101--134, 2022.

\bibitem{JeLu20}
L.~Jeanjean and S.~S. Lu.
\newblock A mass supercritical problem revisited.
\newblock {\em Calc. Var. Partial Differential Equations}, 59(5):Paper No. 174,
  43, 2020.

\bibitem{JeSo25}
L.~Jeanjean and L.~Song.
\newblock Multiplicity result for a mass supercritical {NLS} with a partial
  confinement.
\newblock {\em SIAM J. Math. Anal.}, 57(6):6709--6730, 2025.

\bibitem{LaMo24}
S.~Lancelotti and R.~Molle.
\newblock Normalized positive solutions for {S}chr\"{o}dinger equations with
  potentials in unbounded domains.
\newblock {\em Proc. Roy. Soc. Edinburgh Sect. A}, 154(5):1518--1551, 2024.

\bibitem{LiSo24}
J.~Liang and L.~Song.
\newblock On radial positive normalized solutions of the nonlinear
  {S}chr\"{o}dinger equation in an annulus.
\newblock {\em NoDEA Nonlinear Differential Equations Appl.}, 31(2):Paper No.
  20, 14, 2024.

\bibitem{Lions1984a}
P.-L. Lions.
\newblock The concentration-compactness principle in the calculus of
  variations. {T}he locally compact case. {I}.
\newblock {\em Ann. Inst. H. Poincar\'{e} Anal. Non Lin\'{e}aire},
  1(2):109--145, 1984.

\bibitem{Lions1984b}
P.-L. Lions.
\newblock The concentration-compactness principle in the calculus of
  variations. {T}he locally compact case. {II}.
\newblock {\em Ann. Inst. H. Poincar\'{e} Anal. Non Lin\'{e}aire},
  1(4):223--283, 1984.

\bibitem{MRV}
R.~Molle, G.~Riey, and G.~Verzini.
\newblock Normalized solutions to mass supercritical {S}chr\"{o}dinger
  equations with negative potential.
\newblock {\em J. Differential Equations}, 333:302--331, 2022.

\bibitem{NoTaVe15}
B.~Noris, H.~Tavares, and G.~Verzini.
\newblock Stable solitary waves with prescribed {$L^2$}-mass for the cubic
  {S}chr\"{o}dinger system with trapping potentials.
\newblock {\em Discrete Contin. Dyn. Syst.}, 35(12):6085--6112, 2015.

\bibitem{PPVV}
B.~Pellacci, A.~Pistoia, G.~Vaira, and G.~Verzini.
\newblock Normalized concentrating solutions to nonlinear elliptic problems.
\newblock {\em J. Differential Equations}, 275:882--919, 2021.

\bibitem{PiVe17}
D.~Pierotti and G.~Verzini.
\newblock Normalized bound states for the nonlinear {S}chr\"{o}dinger equation
  in bounded domains.
\newblock {\em Calc. Var. Partial Differential Equations}, 56(5):Paper No. 133,
  27, 2017.

\bibitem{ReSi78}
M.~Reed and B.~Simon.
\newblock {\em Methods of modern mathematical physics. {IV}. {A}nalysis of
  operators}.
\newblock Academic Press [Harcourt Brace Jovanovich, Publishers], New
  York-London, 1978.

\bibitem{So20}
N.~Soave.
\newblock Normalized ground states for the {NLS} equation with combined
  nonlinearities.
\newblock {\em J. Differential Equations}, 269(9):6941--6987, 2020.

\bibitem{So20critical}
N.~Soave.
\newblock Normalized ground states for the {NLS} equation with combined
  nonlinearities: the {S}obolev critical case.
\newblock {\em J. Funct. Anal.}, 279(6):108610, 43, 2020.

\bibitem{St81}
C.~A. Stuart.
\newblock Bifurcation from the continuous spectrum in the {$L^2$}-theory of
  elliptic equations on {${\bf R}^n$}.
\newblock In {\em Recent methods in nonlinear analysis and applications
  ({N}aples, 1980)}, pages 231--300. Liguori, Naples, 1981.

\bibitem{St82}
C.~A. Stuart.
\newblock Bifurcation for {D}irichlet problems without eigenvalues.
\newblock {\em Proc. London Math. Soc. (3)}, 45(1):169--192, 1982.

\bibitem{VeYu25}
G.~Verzini and J.~Yu.
\newblock Normalized solutions for the nonlinear {S}chr\"{o}dinger equation
  with potential: the purely {S}obolev critical case.
\newblock {\em Calc. Var. Partial Differential Equations}, 65(3):80, 2026.

\bibitem{WeWu22}
J.~Wei and Y.~Wu.
\newblock Normalized solutions for {S}chr\"{o}dinger equations with critical
  {S}obolev exponent and mixed nonlinearities.
\newblock {\em J. Funct. Anal.}, 283(6):Paper No. 109574, 46, 2022.

\end{thebibliography}
\vspace{0.25cm}

\end{document}